\definecolor{green}{RGB}{89,169,58}
\definecolor{red}{RGB}{224,61,42}
\definecolor{blue}{RGB}{63,78,181}
\newcommand{\N}{\ensuremath{\mathbb{N}}}
\newcommand{\R}{\ensuremath{\mathbb{R}}}
\newcommand{\Q}{\ensuremath{\mathbb{Q}}}
\newcommand{\E}{\ensuremath{\mathbb{E}}}
\renewcommand{\P}{\ensuremath{\mathbb{P}}}
\newcommand{\ind}[1]{\ensuremath{\mathbbm{1}_{\left\{#1\right\}}}}
\newcommand{\diff}{\mathop{}\mathopen{}\mathrm{d}}
\newcommand{\cal}[1]{\ensuremath{\mathcal{#1}}}
\newcommand\croc[1]{\left\langle #1\right\rangle}
\newcommand\steq[1]{\stackrel{\text{\rm #1.}}{=}}
\def\eps{\varepsilon}
\def\cadlag{c\`adl\`ag }
\def\Pois{{\rm Pois}}
\newtheorem{proposition}{Proposition}
\newtheorem{definition}[proposition]{Definition}
\newtheorem{theorem}[proposition]{Theorem}
\newtheorem*{theoremI}{Theorem}
\newtheorem{corollary}[proposition]{Corollary}
\title[Stochastic Chemical Reaction Networks]{Stochastic Chemical Reaction Networks with Discontinuous Limits and AIMD processes}
\date{\today}
\author[L. Laurence]{Lucie Laurence${ }^1$}
\email{Lucie.Laurence@inria.fr}
\address[L. Laurence, Ph. Robert]{INRIA Paris, 2 rue Simone Iff, 75589 Paris Cedex 12, France}
\author[Ph. Robert]{Philippe Robert}
\email{Philippe.Robert@inria.fr}
\urladdr{http://www-rocq.inria.fr/who/Philippe.Robert}
\thanks{${}^1$Supported by PhD grant of ENS-PSL}
\begin{document}

\begin{abstract}
In this paper we study a class of stochastic chemical reaction networks (CRNs) for which chemical species are created by a sequence of chain reactions.  We prove that under some convenient conditions on the initial state, some of these networks exhibit  a discrete-induced transitions (DIT) property: isolated, random, events have a direct impact on the macroscopic state of the process. If this phenomenon has already been noticed in several CRNs, in auto-catalytic networks in the literature of physics in particular, there are up to now few rigorous studies in this domain.   A scaling analysis of several cases of such CRNs  with several classes of  initial states is achieved.  The DIT property is investigated for the case of a CRN with four nodes. We show that on the normal timescale and for  a subset of (large) initial states and for convenient Skorohod topologies,  the scaled process converges in distribution to a  Markov process with jumps, an  Additive Increase/Multiplicative Decrease (AIMD)  process. This asymptotically discontinuous limiting behavior is a consequence of a DIT property due to random, local, blowups of jumps occurring during small time intervals. With an explicit representation of  invariant measures of AIMD processes and time-change arguments,  we show that, with a speed-up of the timescale, the scaled process is converging in distribution to a continuous deterministic function. The DIT analyzed in this paper is connected to a simple chain reaction between three chemical species and is therefore likely to be a quite generic phenomenon for a large class of CRNs.
\end{abstract}

\maketitle

 \vspace{-5mm}

\bigskip

\hrule

\vspace{-3mm}

\tableofcontents

\vspace{-1cm}

\hrule

\bigskip

\section{Introduction}
The  chemical reaction  network (CRN) with $m$ chemical species considered in this paper is represented as 
\begin{multline}\label{Si}
\emptyset \xrightharpoonup{\kappa_{0}} S_{1} \xrightharpoonup{\kappa_{1}}  S_1{+}S_{2} \xrightharpoonup{\kappa_2}\cdots\\\xrightharpoonup{\kappa_i} S_i{+}S_{i+1} \xrightharpoonup{\kappa_{i+1}} S_{i+1}{+}S_{i+2}\xrightharpoonup{\kappa_{i+2}}\cdots\\\xrightharpoonup{\kappa_{m-1}}  S_{m-1}{+}S_{m}\xrightharpoonup{\kappa_{m}} S_{m}\xrightharpoonup{\kappa_{m+1}}\emptyset.
\end{multline}

 The associated Markov process  $(X(t)){=}(X_i(t),1{\le}i{\le}m)$ lives in the state space $\N^m$.  The kinetics considered for these networks is the classical {\em law of mass action}. See~\citet{guldberg1864studies} and~\citet{Voit2015} for example. It  has the $Q$-matrix $Q{=}(q(x,y),x,y{\in}\N^m)$ defined by, for $x{\in}\N^m$ and $2{\le}i{<}m{-}1$,
\begin{equation}\label{Qmat}
\begin{cases}
  &q(x,x+e_{i+1}{-}e_{i-1}) =  \kappa_{i}x_ix_{i-1},  \qquad q(x,x+e_{1})=  \kappa_{0},\\
    &q(x,x+e_{2}) =  \kappa_{1}x_1,    \qquad q(x,x{-}e_{m-1}) = \kappa_{m}x_mx_{m-1},\\
    &q(x,x{-}e_{m}) =  \kappa_{m+1}x_m,
\end{cases}
\end{equation}
where $(\kappa_i){\in}(0,{+}\infty)^{m+2}$ is the vector of the reaction rates.

Note that the molecules of chemical species $S_1$ are created from an external input and the chemical species $S_m$ vanish independently of the other chemical species. See Section~\ref{ModelSec} for a detailed presentation of the mathematical context of these CRNs. 

We have two important features of this class of CRNs.
\begin{enumerate}
\item {\em Quadratic Rates}.\\
Due to the assumption of the law of mass action, the rate of most of reactions of Relation~\eqref{Si} is a quadratic function of the state. It is a polynomial function in general.
  
The polynomial dependence for the reaction  rates has the consequence that different timescales may coexist in the dynamical behavior of CRNs.  In our example, if all coordinates are of the order of $N$, the rate of the reaction in the middle of Relation~\eqref{Si} is $O(N^2)$, for the reaction $S_m{\rightharpoonup}\emptyset$, it is $O(N)$ and for $\emptyset{\rightharpoonup}S_1$, it is only $O(1)$. 

\item {\em Boundary Behaviors}.\\
The reaction in the middle of Relation~\eqref{Si} does not occur if either $x_i$ or $x_{i+1}$ is null. A molecule of $S_i$ may be transformed into  a molecule of $S_{i+2}$ only if there is at least  a molecule of $S_{i+1}$, even if the $(i{+}1)$th coordinates is not changed by the reaction. This is a boundary effect, some reactions do not occur on the boundary of the state space.

  These boundary effects may have a strong impact on the time evolution of CRNs. For example, for the  CRN of~\citet{LR23}, 
 \[
  \emptyset \mathrel{\mathop{\xrightleftharpoons{}}}  S_1{+}S_2, \hspace{1cm} pS_1{+}S_2 \mathrel{\mathop{\xrightleftharpoons{}}}pS_1{+}2S_2,
  \]
  it has been shown, Theorem~32,  that if $p{\ge}2$ to investigate the time evolution of the CRN starting from  large initial states of the order of $N$, one has to speed-up time by a factor $N^{p-1}$, i.e.  consider limit theorems with the fast timescale $t{\to}N^{p-1}t$, to get non-trivial time evolution. This is essentially due to the boundary condition that at least $p$ copies of $S_1$ are required in two reactions.

As we will see the boundary behaviors of the CRNs  of this paper have also an impact on its time evolution, through a phenomenon referred to as {\em discrete-induced transitions in the literature}. See Section~\ref{DITSec}. 
\end{enumerate}

\noindent
{\sc Auto-catalytic Reaction Networks.}\\
There are other classical classes of CRNs with a generic related way of transforming chemical species. 
The reactions for these networks are 
  \[
  S_i{+}S_{j} \rightharpoonup 2S_i,\qquad \emptyset \xrightleftharpoons[]{} S_i, \qquad 1{\le}i{\ne}j{\le}m.
  \]
See~\citet{Togashi_2001} for example.  As it will be seen, they seem to share some of the properties of the CRNs we are considering in this paper.

\subsection{Scaling with the Norm of the Initial State}
We will investigate this class of CRNs   via the convergence in distribution of its scaled sample paths. This may provide an interesting insight  on  the time evolution of these CRNs. Note that the deficiency of these CRNs is $1$, see Section~\ref{ModelSec}, the classical result of~\citet{Anderson2010} cannot be used here, so that even the existence of an invariant distribution is not known a priori.

Classically, a scaling approach is  mainly done via a scaling of reaction rates. This can be achieved in several ways.
\begin{enumerate}
\item The reaction rates are scaled so that all chemical reactions have a rate of the same order of magnitude in $N$, then with a convenient scaling of the space variable, it can be shown that the scaled Markov process converges in distribution to a deterministic dynamical system of the type described above.  See~\citet{Mozgunov} and Section~2.3 of~\citet{LR23}. The drawback of this scaling is that the scaled CRN does not really exhibit anymore  different timescales in $N$ since the transitions of the scaled process  are all  $O(1)$.
\item The reference~\citet{BallKurtz} considers several CRNs with given scaled of reaction rates with some parameter $N$, a multi-timescale analysis of several classes of such CRNs is achieved via proofs of averaging principles. See also~\citet{KangKurtz} and~\citet{KangKurtzPop}. 
\end{enumerate}

In the spirit of~\citet{LR23},  the scaling we consider in this paper do not change the basic dynamic of the CRN, in particular its reaction rates.  It is assumed that the initial state of the CRN is ``large'',  its norm is proportional to some scaling parameter $N$. We investigate the time evolution of the CRN, in particular how such a saturated initial state returns to some neighborhood of the origin.

If the initial states of two chemical species $S_i$, $S_{i+1}$ are both of the order of $N$, the rate of the reaction in the middle of Relation~\eqref{Si} is of the order of $N^2$ which is maximal for this class of CRNs.
Proposition~\ref{InitH0} of Section~\ref{GenSec} shows  that, for any $\eps{>}0$, the state of the CRN goes ``quickly'' to a set $S_N$ of states for which the indices of the coordinates whose value is greater than $\eps N$  are at distance at least two, the other coordinates being $o(N)$. 

When the number of nodes is odd and the initial state is such that the coordinates with an odd index are of the order of $N$ and all the others are $O(1)$, Theorem~\ref{TheoImp} shows that $(X_N(t)/N)$ is converging in distribution to the solution $(x(t))$ of an ODE converging to $0$ at infinity. The decay of the state of the CRN starting from this initial state is therefore observed on the normal time scale.  This is one of the few general results, with respect to $m$, we have been able to derive.  Nevertheless it turns out that small values of $m$ provide already non-trivial behaviors.

In Section~\ref{TN}, the case of the network with three nodes (chemical species), $m{=}3$, is considered. It is shown, see Proposition~\ref{30N0}, that for a set of initial states   the process $(X_N(t)/N)$ converges in distribution  to a continuous, but random, process. The stochastic fluctuations, represented by the martingales in the evolution equations vanish, as usual,  due to the scaling procedure. Nevertheless there remains a random, discrete, component in the limit. This is due a boundary behavior of the kinetics. This case provides an example of a CRN whose first order is not the solution of a set of deterministic ODEs. 

\subsection{A Scaling Picture with Discontinuous Stochastic Processes}\label{DITSec}
A CRN with four chemical species is investigated in Sections~\ref{FNI} and~\ref{FNII}. A class of initial states gives rise to a
 more complex behavior than what we have observed when $m{=}3$. We did not try a complete (cumbersome) classification of initial states from this point of view as it has been done for $m{=}3$, but we do believe that this is {\em the} interesting  class of initial states.

Recall that $N$ is the scaling parameter of the initial state.  The initial states considered are of the type $(0,N,0,0)$,  with the convention that "$0$", resp. ``$N$'',  means $O(1)$, resp.  $O(N)$. We show that the process lives  in  the subset of  the state space of elements of the type  $(0,N,0,\sqrt{N})$ and that the decay of the norm of the state occurs on the timescale $(\sqrt{N}t)$.  More important, this decay is in fact based on a Discrete-Induced Transitions phenomenon (DIT) which  we now describe. 
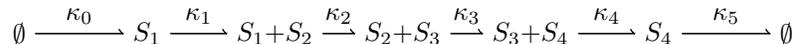
\begin{figure}[ht]
  \centerline{
\scalebox{1.0}{
    \begin{tikzpicture}[->,node distance=1.7cm]
                        \node (O) [below] {$\emptyset$};
                        \node (A) [right of=O] {$S_1$};
                        \node (B) [right of=A] {$S_1{+}S_2$};
                        \node (C) [right of=B] {$S_2{+}S_3$};
                        \node (D) [right of=C] {$S_3{+}S_4$};
                        \node (E) [right of=D] {$S_4$};
                        \node (F) [right of=E] {$\emptyset$};
                        \draw[-left to] (O)  -- node[above] {} (A);
                        \draw[-left to] (A)  -- node[above] {} (B);
                        \draw[-left to] (B)  -- node[above] {} (C);
                        \draw[-left to] (C)  -- node[above] {} (D);
                        \draw[-left to] (D)  -- node[above] {} (E);
                        \draw[-left to] (E)  -- node[above] {} (F);
                        \draw[-left to] (O)  -- node[above] {$\kappa_{0}$} (A);
                        \draw[-left to] (A)  -- node[above] {$\kappa_{1}$} (B);
                        \draw[-left to] (B)  -- node[above] {$\kappa_{2}$} (C);
                        \draw[-left to] (C)  -- node[above] {$\kappa_{3}$} (D);
                        \draw[-left to] (D)  -- node[above] {$\kappa_{4}$ } (E);
                        \draw[-left to] (E)  -- node[above] {$\kappa_{5}$ } (F);
  \end{tikzpicture}}}
 \caption{CRN with $4$ chemical species}\label{Fig2}
\end{figure}

\subsection*{Discrete-Induced Transitions (DIT)}
One of the early references to this phenomenon is~\citet{Togashi_2001} where the term has been coined apparently. In the context of auto-catalytic CRNs (see above)  with few nodes, it is observed, via numerical experiments, that  a limited number molecules of one chemical species can switch the entire bio-chemical state of a system.  It is characterized by the fact that the state variable alternates between two subsets of the state spaces. From a biological point of view,  the role of this set of molecules can be seen as a ``switch'' which can block or activate a set of  chemical reactions. See also~\citet{Togashi_2007} and~\citet{Saito}. The mathematical characterization of this phenomenon has mainly been done with the analysis of the  associated invariant measure. Intuitively the DIT phenomenon should be expressed by the fact that the invariant distribution is concentrated on at least two subsets of the state space, a bi-modal distribution. See~\citet{Bibbona}, \citet{Hoessly2019} and~\citet{GP23}. Note however that it does not give a dynamical picture of this phenomenon, such as an estimation of the sojourn time in a subset of states before visiting another subset. This is associated to the metastability  property of statistical physics. See~\citet{Bovier}. A related phenomenon has also been analyzed in~\citet{delSole}.

Returning to our CRN with four chemical species and the initial state of the type $(0,N,0,\sqrt{N}$), it turns out that the growth of the fourth coordinate $(X_4(t))$ occurs only during time intervals whose duration are $O(1/\sqrt{N})$ and during them there is a large number of positive jumps of this process, of the order of $\sqrt{N}$. Recall that this phenomenon is only due to the law of mass action which drives the kinetics of the CRN. This is where boundary effects have a significant impact. 

  The switch effect occurs during these small time intervals, the occurrence of them is driven by the isolated instants of creation of particles of chemical species $S_3$. The duration of these time intervals vanishes in the limit, so we do not have a metastability-related phenomenon as discussed in the literature. Nevertheless they play a critical role in the kinetics of the system, since this is at these instants, and only there, that the second coordinate $(X_2(t))$ can decrease.

  \subsection*{AIMD Processes}
The asymptotic behavior of $(X_4(t)/\sqrt{N})$ can be described in terms of  jump processes belonging to a class of AIMD processes on $\R_+$. These processes exhibit an exponential decay between jumps or a negative jump proportional to the current value of the state (Multiplicative Decrease) and, for the increasing part,  positive jumps depending on the current state or a linear growth (Additive Increase). There are two classes of AIMD processes in our analysis, their infinitesimal generators $\Omega_0$ and $\Omega_1$ are given by, for  bounded function $f{\in}{\cal C}_1(\R_+)$ and $v{\ge}0$,
\begin{align}
  \Omega_0(f)(v) &= -\frac{1}{\gamma}v f'(v)+\int_0^{+\infty}\left(f\left(\sqrt{v^2{+}2\beta s}\right){-}f(v)\right)e^{-s}\diff s,\label{Om0}\\
  \Omega_1(f)(v) &= \frac{1}{\gamma}f'(v) {+}v\int_0^1 \left(f\left(v u^{\beta}\right){-}f\left(v\right)\right)\diff u,\label{Om1}
\end{align}
  for some constants $\beta$, $\gamma{>}0$. 

  AIMD processes play also an important role in several classes of stochastic models, in mathematical finance, see~\citet{Bertoin} and~\citet{Yor} for an overview, communication networks~\citet{GRZ}, or genomics~\citet{Cowan}. An analogous multiplicative property has also been observed in other CRNs. See Section~8  of~\citet{LR23} for example. On the normal timescale, the main convergence result is given by the following theorem. 

\begin{theoremI}
  If  $X_N(0){=}(0,N,1,v_N)$,  with $(v_N/\sqrt{N})$ converging to $v{\ge}0$, then for the convergence in distribution for the $M_1$-Skorohod topology and also the $S$-topology,
  \[
  \lim_{N\to+\infty} \left(\frac{X_2^N(t)}{N},\frac{X_4^N(t)}{\sqrt{N}}\right)=(1,V(\kappa_0 t)),
  \]
  where $(V(t))$ is the Markov process on $\R_+$ whose infinitesimal generator $\Omega_0$ is given by, for $f{\in}{\cal C}_c^1(\R_+)$ and $x{\in}\R_+$, 
  \[
\Omega_0(f)(x) = -\frac{\kappa_5}{\kappa_0} x f'(x)+\int_0^{+\infty}\left(f\left(\sqrt{x^2{+}2\frac{\kappa_3}{\kappa_4}s}\right){-}f(x)\right)e^{-s}\diff s
\]
\end{theoremI}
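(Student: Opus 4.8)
The plan is to exploit that molecules of $S_1$ are created by a Poisson process of rate $\kappa_0$ --- the only reaction with constant rate --- and that each such creation triggers, after a delay of order $1/N$ (the time for $S_1{+}S_2{\to}S_2{+}S_3$ to fire), a short \emph{burst}: while the unique molecule of $S_3$ is present the reaction $S_2{+}S_3{\to}S_3{+}S_4$ fires $\Theta(\sqrt N)$ times, so that $X_4^N$ grows by $\Theta(\sqrt N)$ and $X_2^N$ decreases by the same amount, until $S_3{+}S_4{\to}S_4$ clears the $S_3$ and ends the burst; between bursts only $S_4{\to}\emptyset$ acts and $X_4^N$ simply decays. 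Since an $S_3$ is present at $t=0$, a burst occurs at once; this one together with the subsequent Poisson-triggered ones produces the jumps of $V(\kappa_0\cdot)$. I would decompose the trajectory at these instants and analyse it burst by burst via the strong Markov property.

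First I would prove a priori estimates, valid with high probability on a fixed $[0,T]$: $X_1^N$ and $X_3^N$ stay in $\{0,1\}$ and equal $1$ only on a time-set of vanishing length (their clearing reactions have rates of order $N$ and $\sqrt N$, versus the input rate $\kappa_0{=}O(1)$); bursts do not overlap and $X_3^N$ never reaches $2$; $X_2^N\in[N{-}C_T\sqrt N,\,N{+}1]$ and $X_4^N\le C_T\sqrt N$. The first coordinate of the limit is then immediate: $X_2^N$ only decreases, by one unit per firing of $S_2{+}S_3{\to}S_3{+}S_4$, and there are $O(1)$ bursts of ``length'' $O(\sqrt N)$ each, so $\sup_{t\le T}|X_2^N(t)/N-1|\to0$ in probability.

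The heart of the matter is the single-burst analysis. Starting from $(0,\ell,1,k)$ with $\ell\sim N$, $k\sim u\sqrt N$, and putting the effect of the lower-order reactions $S_4{\to}\emptyset$, $\emptyset{\to}S_1$ during the $O(1/\sqrt N)$ burst into an error term, after $i$ firings of $S_2{+}S_3{\to}S_3{+}S_4$ the next relevant event is again that reaction with probability $\kappa_3(\ell{-}i)/(\kappa_3(\ell{-}i){+}\kappa_4(k{+}i))$ and is $S_3{+}S_4{\to}S_4$ (ending the burst) otherwise, so the number $J^N$ of such firings satisfies
\[
\P(J^N\ge j)=\prod_{i=0}^{j-1}\frac{\kappa_3(\ell-i)}{\kappa_3(\ell-i)+\kappa_4(k+i)}\,(1{+}o(1)),
\]
and a Laplace estimate of this product with $j{=}w\sqrt N$ gives $\P(J^N/\sqrt N\ge w)\to\exp(-(\kappa_4/\kappa_3)(uw{+}w^2/2))$. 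Writing $K^N$ for $X_4^N$ at the end of the burst, this is equivalent to $((K^N)^2{-}k^2)/N\Rightarrow 2(\kappa_3/\kappa_4)\cal{E}$, with $\cal{E}$ exponential of parameter $1$, \emph{independently of $u$}: the square of $X_4^N$, rescaled by $N$, makes an additive jump. So I would run the whole argument for $(X_4^N(t)^2/N)$ and transfer to the statement by the continuous map $x\mapsto\sqrt x$: between bursts $X_4^N$ is a pure death process of per-capita rate $\kappa_5$, hence $X_4^N(t)^2/N$ is close to $\mathrm e^{-2\kappa_5 t}$ times its value at the start of the inter-burst interval (law of large numbers); at the Poisson (rate $\kappa_0$) burst times it jumps by an independent $2(\kappa_3/\kappa_4)\cal{E}$; and this is exactly the description of $(V(\kappa_0 t)^2)$, as the change of function $g(m){=}f(\sqrt m)$ turns $\Omega_0$ into the corresponding generator.

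Finally I would prove convergence of the rescaled state embedded at the successive burst times --- using the strong Markov property, the burst estimate, the (exactly i.i.d.\ exponential of parameter $\kappa_0$) inter-burst durations, and the continuity of $x\mapsto x\mathrm e^{-2\kappa_5 s}$, the $O(1/\sqrt N)$ burst durations being harmless --- and then reconstruct the full path: off bursts $(X_4^N(t)^2/N)$ is uniformly close to the deterministic decay, while on each burst interval it makes $\Theta(\sqrt N)$ upward unit jumps of $X_4^N$ and $O(1)$ downward ones, i.e.\ it is monotone up to an $o(1)$ error over an interval of vanishing length while increasing by a macroscopic amount, so its completed graph is Hausdorff-close to the vertical segment carrying the limiting jump: this is $M_1$-convergence, and the same control of oscillations and of the ($O(1)$ many) up-crossings gives $S$-convergence, whereas $J_1$ fails because a genuine jump cannot be matched by a spread-out rise. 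Tightness in $M_1$ and $S$ follows from the a priori bounds and this $o(1)$-monotonicity on bursts. The main obstacle is precisely the single-burst asymptotics and its globalisation: controlling the product formula uniformly over admissible entry states (including the drift of $X_2^N$ and $X_4^N$ inside a burst and the neglected reactions), excluding interleaving bursts and $X_3^N{=}2$, and assembling these ``limit-of-limits'' estimates into convergence of the embedded chain; the switch from $J_1$ to $M_1$/$S$ forces the oscillation control to be done by monotone approximation rather than via jump sizes.
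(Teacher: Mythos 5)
Your proposal is correct, and its architecture --- decomposition at the points of ${\cal P}_0$, a single-burst lemma giving the jump law, exponential decay of $X_4^N$ between bursts, reassembly by the strong Markov property, and $M_1$/$S$ convergence from the structure of the bursts --- is exactly the one the paper follows (Propositions~\ref{Hconv}, \ref{PropJ1} and~\ref{PropM1}). The one genuine difference is how the burst lemma is proved: the paper couples $(X_2^N,X_4^N)$ on a burst with an autonomous pair $(Y_2^N,Y_4^N)$, proves the functional law of large numbers $(Y_4^N(t/\sqrt N)/\sqrt N)\to (v{+}\kappa_3 y t)$, and represents the burst length as the solution $H$ of $\kappa_4\int_0^H Y_4^N(s)\diff s{=}E_1$, which yields the jump $x\mapsto\sqrt{x^2{+}2(\kappa_3/\kappa_4)E_1}$; you instead read the burst off the embedded jump chain and make a Laplace estimate on the product $\prod_i \kappa_3(\ell{-}i)/(\kappa_3(\ell{-}i){+}\kappa_4(k{+}i))$, then linearize by passing to $X_4^N(t)^2/N$ (the paper squares the process only in Section~\ref{V0sec}, to compute the invariant law of the AIMD limit). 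Your computation checks out: with $w{=}\sqrt{u^2{+}z}{-}u$ one gets $uw{+}w^2/2{=}z/2$, so the squared increment is exponential of rate $\kappa_4/(2\kappa_3)$ independently of the entry point, and the $O(1)$ removals by $S_4{\rightharpoonup}\emptyset$ during a burst perturb each of the $O(\sqrt N)$ factors by a relative $O(1/N)$ only, so the $(1{+}o(1))$ is legitimate. What the paper's functional version buys is the uniform-in-time statement (Corollary~\ref{Corol1}) that the path on a burst is sup-norm close to a linear ramp, which is fed verbatim into Whitt's parametric criterion for $M_1$; your ``monotone up to $o(1)$ over a vanishing interval, rising by a macroscopic amount'' argument reaches the same conclusion for the completed graphs and is in any case what you need for the oscillation counts underlying $S$-tightness.
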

The limit of the scaled process is not only random, as in Section~\ref{0N03} for $m{=}3$, but also discontinuous. 
The fact that the limit is a jump process is in fact a consequence of the large number of positive jumps during small time intervals. The $M_1$-Skorohod topology and the $S$-topology are weaker than the classical  $J_1$-Skorohod topology which does not allow accumulation of jumps. See the discussion in Section~\ref{FNI}.

The convergence result of $(X_2(t)/N)$ to  $(1)$ implies that the normal timescale is too slow to observe a decay of the norm. See~\citet{LR23} for a discussion of this phenomenon.  We prove the following convergence in distribution, it involves a speed-up of the timescale by a factor of $\sqrt{N}$. 
\begin{theoremI}
If $X(0){=}(0,N,0,v_N)$, with $(v_N/\sqrt{N})$ converging to $v{\ge}0$, then  the relation
\[
  \lim_{N\to+\infty}\left(\frac{X_2\left(\sqrt{N}t\right)}{N}, t{<}t_\infty\right)=\left(\left(1{-}\frac{t}{t_\infty}\right)^2, t{<}t_\infty\right)
  \]
holds for the convergence in distribution,  with
  \[
  t_\infty=\left.\sqrt{2}\,\frac{\kappa_5}{\kappa_0^2}\sqrt{\frac{\kappa_4}{\kappa_3}}\,\Gamma\left(\frac{\kappa_0}{2\kappa_5}\right)\right/\Gamma\left(\frac{\kappa_5{+}\kappa_0}{2\kappa_0}\right),
  \]
where $\Gamma$ is the classical {\rm Gamma} function. 
\end{theoremI}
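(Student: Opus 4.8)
\emph{Strategy.} The plan is to establish an averaging principle on the sped-up timescale $(\sqrt{N}t)$: the rescaled coordinate $Y_N(t){:=}X_2^N(\sqrt{N}t)/N$ is a \emph{slow} variable, while the rescaled fourth coordinate $W_N(t){:=}X_4^N(\sqrt{N}t)/\sqrt{N}$ (carried by the point process of creations of $S_3$ molecules) is a \emph{fast} component whose time averages, for a frozen value $y$ of the slow variable, are governed by the invariant measure of an AIMD process of the type of the preceding theorem. The drift of $(Y_N)$ is then read off from that explicit invariant measure, and the resulting ODE is integrated.

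\emph{Fast dynamics for a frozen slow variable.} Fix $y{>}0$ and assume $X_2^N{\approx}Ny$. As in the proof of the preceding theorem, molecules of $S_1$ appear at rate $\kappa_0$ and, since the reaction $S_1{+}S_2{\to}S_2{+}S_3$ has rate $\kappa_2X_1X_2$ of order $N$, each of them is turned, within a time $O(1/N)$, into one molecule of $S_3$; such a molecule triggers an excursion during which $X_4^N$ performs of order $\sqrt{N}$ upward unit jumps at rate $\kappa_3X_2X_3{\sim}\kappa_3Ny$ and which ends when $S_3$ is consumed by $S_3{+}S_4{\to}S_4$ at rate $\kappa_4X_3X_4$. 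Solving the elementary equation governing such an excursion ($W_N$ growing linearly until an inhomogeneous Poisson clock of rate $\kappa_4\sqrt{N}W_N$ rings) shows that it makes $W_N$ jump from $w$ to $\sqrt{w^2{+}2(\kappa_3y/\kappa_4)\mathcal{E}}$, with $\mathcal{E}$ exponential of parameter $1$, while between excursions $W_N$ decays as $\dot{w}{=}{-}\kappa_5w$. Hence, after the speed-up by $\sqrt{N}$ and for frozen $y$, $(W_N(t))$ is asymptotically the AIMD process with generator $\Omega_0$ in which $\kappa_3$ is replaced by $\kappa_3y$; this can be established by reusing the $M_1$- and $S$-topology estimates of the preceding theorem, uniformly for $y$ in compact subsets of $(0,{+}\infty)$.

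\emph{Invariant measure and limiting ODE.} The square of that AIMD process is a shot-noise process: exponential decay at rate $2\kappa_5/\kappa_0$ and, at rate $1$, positive jumps of size $2(\kappa_3y/\kappa_4)\mathcal{E}$. Testing its generator against $z{\mapsto}e^{-\theta z}$ gives a first-order linear differential equation for the Laplace transform of the stationary law whose solution shows that, under the invariant measure $\pi_y$, the variable $\kappa_4W^2/(2\kappa_3y)$ has a Gamma distribution with parameters $(\kappa_0/(2\kappa_5),1)$, so that
\[
\E_{\pi_y}[W]=\sqrt{\frac{2\kappa_3y}{\kappa_4}}\,\frac{\Gamma\!\left(\tfrac{\kappa_0}{2\kappa_5}{+}\tfrac12\right)}{\Gamma\!\left(\tfrac{\kappa_0}{2\kappa_5}\right)}.
\]
On the other hand $X_2$ decreases only through $S_2{+}S_3{\to}S_3{+}S_4$, whose cumulated rate equals the rate of creation of $S_4$, which in the local stationary regime equals the rate $\kappa_5X_4$ of destruction of $S_4$; this flow balance yields $\diff Y_N(t)/\diff t{\to}{-}\kappa_5\E_{\pi_{Y_N(t)}}[W]$. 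After a tightness argument for $(Y_N)$ on $[0,t_\infty)$ and identification of the accumulation points, the limit $(y(t))$ solves $y'{=}{-}c\sqrt{y}$, $y(0){=}1$, with $c{=}\kappa_5\sqrt{2\kappa_3/\kappa_4}\,\Gamma(\tfrac{\kappa_0}{2\kappa_5}{+}\tfrac12)/\Gamma(\tfrac{\kappa_0}{2\kappa_5})$; integrating, $\sqrt{y(t)}{=}1{-}ct/2$, i.e. $y(t){=}(1{-}t/t_\infty)^2$ for $t{<}t_\infty{=}2/c$, which after elementary rearrangement is the announced closed form of $t_\infty$. As the limit is deterministic, convergence in probability on compact subsets of $[0,t_\infty)$ gives the stated convergence in distribution.

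\emph{Main obstacle.} The delicate part is the averaging: one must show that the occupation measure of the fast component $(W_N)$ concentrates, uniformly along the trajectory, on $\pi_{y(\cdot)}$. This requires a quantitative mixing estimate for the AIMD process uniform in the frozen parameter $y$ over compacts of $(0,{+}\infty)$, control of the rare events (overlapping or anomalously long excursions, simultaneous presence of several $S_1$ or $S_3$ molecules) that would spoil the excursion picture, and, because the fast component has accumulating jumps, carrying this out in the weak $M_1$ and $S$ topologies rather than in $J_1$. A secondary difficulty is the behaviour as $t{\uparrow}t_\infty$, where $y(t){\to}0$ and the frozen-chain approximation degrades; a deterministic time change, under which $\sqrt{Y_N}$ should satisfy an approximately affine equation, is a natural device to control this boundary layer.
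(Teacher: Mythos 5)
Your computations are correct and your $t_\infty{=}2/c$ reproduces the formula of the body of the paper (Theorem~\ref{AP2} together with Relation~\eqref{cH}), namely $\sqrt{2}\,\kappa_5^{-1}\sqrt{\kappa_4/\kappa_3}\,\Gamma(\kappa_0/(2\kappa_5))/\Gamma(\kappa_0/(2\kappa_5){+}1/2)$; the expression printed in the statement above, with $\kappa_5/\kappa_0^2$ and $\Gamma((\kappa_5{+}\kappa_0)/(2\kappa_0))$, is inconsistent with that and appears to be a typo, so no ``elementary rearrangement'' will reconcile the two. Your route is, however, genuinely different from the paper's. You average directly in real time: the fast component is the real-time AIMD process with generator $\Omega_0$ (so $W^2{\sim}\Gamma_0(\kappa_0/(2\kappa_5),\kappa_4/(2\kappa_3y))$ under $\pi_y$, as in Proposition~\ref{VoInv}), and the slow drift is extracted by the flow balance ``creations of $S_4$ equal destructions of $S_4$ up to $O(\sqrt{N})$'', giving $y'{=}{-}\kappa_5\E_{\pi_y}[W]$. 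The paper instead first replaces $(X_N)$ by the simplified process $(U_N)$ (Proposition~\ref{U=Z}) and time-changes to the local time of $\{U_3^N{\ge}1\}$; under that clock the slow coordinate becomes asymptotically autonomous, $Z_2^N(t)/N{\to}ye^{-\kappa_3t}$, with no averaging needed for its own drift, and the only averaging required is for the additive functional $H_N$ converting local time back to real time (Proposition~\ref{PropH}), whose integrand is supplied by the self-contained particle-system limit of Theorem~\ref{LimitOccApp}. The $+1/2$ shift between the local-time shape parameter $\kappa_0/(2\kappa_5){+}1/2$ and your real-time shape parameter $\kappa_0/(2\kappa_5)$ is exactly the size-biasing induced by the clock, and the two computations of $t_\infty$ agree, which is a good consistency check on your constants.

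The caveat is that what you label the ``main obstacle'' --- concentration of the real-time occupation measure of $(W_N)$ on $\pi_{y(\cdot)}$ uniformly along the trajectory, joint tightness and identification for the slow variable, and the degenerating regime $y{\downarrow}0$ near $t_\infty$ --- is precisely the content that the paper's time-change architecture is designed to avoid, and it is not obtained by ``reusing the $M_1$- and $S$-topology estimates of the preceding theorem'': those concern a single macroscopic state on the normal timescale, not a uniform statement over the $(\sqrt{N}t)$ timescale. Your plan is workable (and for the averaging itself the $M_1$/$S$ machinery is unnecessary: occupation-measure compactness in the sense of Kurtz's lemmas suffices, which is how the paper proceeds in Theorem~\ref{LimitOccApp}), but as written the proposal states the crux rather than proving it, whereas the paper reduces that crux to an explicit clock computation plus one averaging lemma for a cleaner auxiliary particle system.
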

See Theorem~\ref{AP2}. The proof of this result relies on several ingredients:
\begin{enumerate}
\item The proof of a limit result, Theorem~\ref{LimitOccApp}, related to AIMD processes associated to an infinitesimal generator of the type $\Omega_1$ of Relation~\eqref{Om1}.  An explicit expression  of their invariant distributions, Section~\ref{AIMDSec}, is used;
\item Multiple time-changes. This is an interesting example of the efficiency of stochastic calculus in such a context;
\item The proof of a stochastic averaging principle.
\end{enumerate}
We also show that the fourth coordinate is of the order of $\sqrt{N}$, with a limit result for its associated occupation measure,  and  the values of the first and third coordinates are essentially $0$ or $1$ in the limit,  from the point of view of their contribution in the evolution equations.  

\subsection{Outline of the Paper}
Section~\ref{ModelSec} introduces the class of CRNs investigated, the associated stochastic models and the kinetic equations. Section~\ref{GenSec} proves a scaling result for a CRN with an odd number of nodes and a specific class of  initial states. A scaling analysis of the CRN with three nodes is done in Section~\ref{TN}. Section~\ref{AIMDSec} introduces and investigates the AIMD processes of interest for our scaling analysis.  Sections~\ref{FNI} and~\ref{FNII} are devoted to the CRN with four nodes and a class of initial states. 

\section{Stochastic Model}\label{ModelSec}
We first  recall the formal definitions for the model of CRN of interest. 
\begin{definition}
A {\em chemical reaction network} (CRN) with $m$ {\em chemical species}, $m{\ge}1$, is defined by a triplet $({\cal S},{\cal C},{\cal R})$,
\begin{itemize}
\item ${\cal S}{=}\{1,\ldots,m\}$ is the set of species;
\item the set of {\em complexes}, is a finite subset of $\N^m$
  \[
 {\cal C}=\{0, \{e_1\},\{e_m\},\{e_i+e_{i{+}1}\},1{\le}i{\le}m{-}1\},
 \]
 where $e_i$ is the $i$th unit vector of $\N^m$. 
\item The set of {\em chemical reactions} ${\cal R}$, is  a subset of ${\cal C}^2$,
\begin{multline*}
  {\cal R}=\{(e_i{+}e_{i+1},e_{i+1}{+}e_{i+2}), 1{\le}i{<}m{-}2\}\\\cup\{(0,e_1),(e_1,e_1{+}e_2),(e_m{+}e_{m-1},e_{m}),(e_m,0)\}
\end{multline*}
\end{itemize}
\end{definition}
The notation $0$  refers to the complex associated to the null vector of $\N^m$,  $\emptyset{=}(0)$. A chemical reaction $r{=}(y_r^-,y_r^+){\in}{\cal R}$ corresponds to the change of state, for $x{=}(x_i)$ and $y_r^\pm{=}(y_{r,i}^\pm)$, 
\[
x\longrightarrow x{+}y_r^+{-}y_r^-,
\]
provided that $y_{r,i}^-{\le}x_i$ holds for  $1{\le}i{\le}m$, i.e. there are at least $y_{r,i}^-$ copies of chemical species of type $i$, for all $i{\in}{\cal S}$, otherwise the reaction cannot happen. For the CRNs considered here, we have $y_{r,i}^-{\in}\{0,1\}$ for all $r{\in}{\cal R}$ and $1{\le}i{\le}m$. 

The Markov process $(X(t))$ is clearly irreducible on $\N^m$. Its $Q$-matrix associated to the law of mass action is given by Relation~\eqref{Qmat}.  For  $2{\le}i{<}m{-}1$, the transitions of $(X(t))$ transform chemical species $S_{i-1}$ to $S_{i+1}$ via the action of $S_{i}$. As a particle system, the particles arrive at rate $\kappa_0$ at node $1$ or at  node $2$ at rate $\kappa_1x_1$ and move on the right by steps of size $2$ to finally leave the network at either node $m{-}1$ or $m$. The boundary effect is that the   transformation  of a molecule of  $S_{i-1}$ occurs at a rate proportional to the number of  molecules of $S_{i}$.  The parity of the indices of the nodes plays an important role as it can be expected. 

It is easily seen that this CRN is {\em weakly reversible} and its {\em deficiency }  is $1$, see~\citet{Feinberg} for the definitions. 

\subsection{Definitions and Notations}\label{defnot}

It is assumed that on the probability space there are $m{+}2$ independent Poisson processes on $\R_+^2$, ${\cal P}_i$, $0{\le}i{\le}m{+}1$,  with intensity $\diff s{\otimes}\diff u$ and also an independent i.i.d. family of Poisson process ${\cal N}_\sigma$, $\sigma{>}0$,  on $\R_+^2{\times}\R_+{\times}\R_+^\N$ whose intensity measure is
\[
\diff s{\otimes}\diff t{\otimes}\sigma \exp({-}\sigma a)\diff a{\otimes}Q(\diff b),
\]
where $Q$ is the distribution on $\R_+^\N$ of an  i.i.d. sequence $(E_i)$ of exponential random variables with parameter $\kappa_5$. 

The underlying filtration used throughout the paper is $({\cal F}_t)$, with, for $t{\ge}0$,
\begin{multline*}
  {\cal F}_t=\sigma\left({\cal P}_i(A{\times}[0,s)), 0{\le}i{\le}m{+}1,\right.\\\left.
    {\cal N}_\sigma(A{\times}[0,s){\times}B{\times}C),  \sigma{>}0,A, B{\in}{\cal B}(\R_+), C{\in}{\cal B}(\R_+^\N) s{\le}t\right). 
\end{multline*}
All stopping time and martingale properties implicitly refer to this filtration. See Section~B.1 of~\citet{LR23}. If $H$ is a metric space,  ${\cal C}_c(H)$ denotes the set of continuous functions on $H$ with compact support and  ${\cal C}_c^1(\R_+)$ the subset of  ${\cal C}_c(\R_+)$ of continuously differentiable functions on $\R_+$.  For $T{>}0$ ${\cal D}([0,T),\R)$ denotes the set of \cadlag functions on $[0,T)$, that is right continuous functions with left limits at all point.

For $\rho{>}0$, we will denote by $\Pois(\rho)$ the Poisson distribution on $\N$  with parameter $\rho$.     
Due to the numerous processes that have to be considered and to avoid heavy notations, we will use in the text the same notations such as $(A_N(t))$, $(B_N(t))$,  $(Z_N(t))$ for different stochastic processes, or $(M_N(t))$ for a martingale,  in different contexts, essentially in proofs of results. Similarly, several stopping times are denoted as $\tau_N$ with possibly other indices.

\subsection{Stochastic Differential Equations}
The goal of this paper is of investigating the transient properties of the sample paths of these CRNs and in particular to describe, via a functional limit theorem, how the process $(X(t))$ starting from a ``large'' initial state comes back to a neighborhood of the origin. 
It is assumed that the sequence of initial states satisfies the relation,
\begin{equation}\label{IntCond1}
\lim_{N\to+\infty}\left(\frac{X^N_i(0)}{N}\right)=\alpha=(\alpha_i){\in}\R_+^m.
\end{equation}
The scaling parameter  $N$ used   is such that the norm of the initial state is of the order of $N$.

For $N{\ge}1$, the Markov process $(X_N(t)){=}(X_i^N(t))$ starting from $X_N(0)$ with $Q$-matrix $Q$ can be represented as the solution of the SDEs, for $1{<}i{<}m$,
\begin{equation}\label{SDEm}
\begin{cases}
\diff X_1^N(t)&{=}{\cal P}_{0}\left(\left(0,\kappa_{0}\right),\diff t\right){-}{\cal P}_{2}\left(\left(0,\kappa_{2}X_{1}^N(t{-})X_{2}^N(t{-})\right),\diff t\right),\\
\diff X_2^N(t)&{=}{\cal P}_{1}\left(\left(0,\kappa_{1}X_{1}^N(t{-})\right),\diff t\right){-}{\cal P}_{3}\left(\left(0,\kappa_{3}X_{2}^N(t{-})X_{3}^N(t{-})\right),\diff t\right),\\
\diff X_i^N(t)&{=}{\cal P}_{i-1}\left(\left(0,\kappa_{i-1}X_{i-2}^N(t{-})X_{i-1}^N(t{-})\right),\diff t\right)\\
&\hspace{3cm}    {-}{\cal P}_{i+1}\left(\left(0,\kappa_{i+1}X_{i}^N(t{-})X_{i+1}^N(t{-})\right),\diff t\right),\\
\diff X_m^N(t)&{=}{\cal P}_{m-1}\left(\left(0,\kappa_{m-1}X_{m-2}^N(t{-})X_{m-1}^N(t{-})\right),\diff t\right)\\
&\hspace{3cm} {-}{\cal P}_{m+1}\left(\left(0,\kappa_{m+1}X_{m}^N(t{-})\right),\diff t\right).
\end{cases}
\end{equation}
See~\citet{Rogers2} for example. 

The scaled process is introduced as
\begin{equation}\label{ScalProc}
\left(\overline{X}_N(t)\right)=\left(\overline{X}_{i}^N(t)\right)=\left(\frac{X_i^N(t)}{N}\right). 
\end{equation}

\subsection{The $\mathbf{M/M/\infty}$ queue}\label{MMISec}
We finish by recalling the definition of a process associated to a very simple, but important, CRN,
\[
\emptyset \mathrel{\mathop{\xrightleftharpoons[\mu]{\lambda}}} S_1.
\]
This is the ${M/M/\infty}$ queue with input parameter $\lambda{\ge}0$ and output parameter $\mu{>}0$. It is a  Markov process $(L(t))$  on $\N$ with transition rates
\[
x\longrightarrow
\begin{cases}
x{+}1&   \lambda \\
x{-}1&   \mu x.
\end{cases}
\]
The invariant distribution of $(L(t))$ is Poisson with parameter $\rho{=}\lambda/\mu$. For $t{\ge}0$, if $L(0){=}0$, then $L(t)$ has a Poisson distribution with parameter
\[
	\rho(1-e^{-\mu t}).
\]
If $T_N$ is the hitting time of $N$, $T_N{=}\inf\{t>0{:} L(t){\ge} N\}$, 
then the sequence
\[
\left(\frac{\rho^N}{(N{-}1)!}T_N\right)
\]
converges in distribution to an exponential random variable. As a consequence, for $p{\ge}1$,  the convergence in distribution
\[
\lim_{N\to+\infty} \left(\frac{L(N^pt)}{N}\right)=(0)
\]
holds. See Chapter~6 of~\citet{Robert}. 
This is an important process in the context of stochastic CRNs, see~\citet{LR23}.

\section{Scaling Properties}\label{GenSec}
In this section, we investigate some general properties of the asymptotic behavior of the sample paths of $(X(t))$ when the initial state is ``large''.

As it will be seen, this is a challenging problem in general. Sections~\ref{FNI} and~\ref{FNII} investigate a specific class of initial states of a network with {\em four} nodes for which a scaling description of the return path to $0$ involves, at the normal timescale, {\em jump processes} and not the nice solution of some set of ODES as it is usually the case. 

The scaling parameter $N$ can be thought, up to some fixed multiplicative constant, as the norm of the initial state. 
Our first result of this section, Proposition~\ref{InitH0}, shows that, for any $\eps{>}0$, the process goes ``quickly'' to a set of states for which the indices of the coordinates whose value is greater than $\eps N$ are at distance at least two, the other coordinates being $o(N)$.

The second result, Theorem~\ref{TheoImp}, considers the case when the number of nodes $m$ of the CRN is odd and the initial state is of the order of $(\alpha_1,0,\alpha_3,0,\ldots,0,\alpha_m)N$, with $\alpha_{k}{>}0$, for $k{\in}\{1,3,\ldots,m\}$. It is shown that, on the normal timescale, the scaled process converges in distribution to the solution of a system of ODEs. An averaging principle is proved to establish this convergence. Its proof  uses several results on the Markov process associated to a series of $M/M/\infty$ queues, Proposition~\ref{PropMMI} together with a coupling result, Proposition~\ref{PropCoup} for the proof of the tightness of occupation measures.  

The following proposition states essentially that, from this point of view, one can concentrate the study on asymptotic initial states such that the positive components have isolated indices. 
\begin{proposition}\label{InitH0}
 Under Condition~\eqref{IntCond1}, if $H_0{\steq{def}}\left\{x{\in}\R_+^m:x_ix_{i+1}=0, \forall 1{\le}i{<}m\right\}$, then  there exists $\beta{\in}H_0$ such that, for any $\eta{>}0$, there exists a stopping time $\tau_N$ satisfying the relation
  \[
\lim_{N\to+\infty}\P\left(\max_{1{\le}i{\le}m}\left(\rule{0mm}{4mm}\left|\frac{X_i^N(\tau_N)}{N}-\beta_i\right|\right)\ge \eta\right)=0,
\]
and the sequence $(\tau_N)$ is converging in distribution to $0$. 
\end{proposition}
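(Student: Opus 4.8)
The plan is to exploit the quadratic reaction rates in the "bulk" of the network, which drive mass rapidly out of any configuration where two consecutive coordinates are simultaneously macroscopic. First I would observe that the total mass $\|X_N(t)\|_1$ (sum of coordinates) has only two input channels, $\emptyset\rightharpoonup S_1$ at the bounded rate $\kappa_0$ and $S_1\rightharpoonup S_1{+}S_2$ at rate $\kappa_1 x_1$, and one output $S_m\rightharpoonup\emptyset$ at rate $\kappa_{m+1}x_m$; all the bulk reactions conserve mass. Hence on any interval on which the process stays in a region of order $N$, the mass cannot increase by more than $O(N)$ over an $O(1)$ time window; this gives an a priori bound $\sup_{t\le T}\|\overline X_N(t)\|_1 \le C$ with high probability, starting from $\|\overline X_N(0)\|_1\to\|\alpha\|_1$, which will be used repeatedly to turn quadratic rates into rates of order $N^2$.

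Next I would run an iterative "smoothing" procedure on the coordinates. Consider the function $\varphi(x)=\sum_{i<m} x_i x_{i+1}$, which vanishes exactly on $H_0$. Along the SDEs \eqref{SDEm}, whenever some $x_i x_{i+1}$ is of order $N^2$, the corresponding bulk reaction fires at rate of order $N^2$ and transfers one unit of mass from $S_{i-1}$-type bookkeeping to $S_{i+2}$; the key structural point is that this reaction strictly decreases $x_i$ (for $i\ge 2$) or leaves a product pair that will itself be depleted, so the "overlap" $\varphi$ is pushed down on the fast timescale $t\mapsto t/N^2$ while the bounded-input terms contribute only $o(1)$ over times of order $1/N^2$. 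Making this precise, I would define $\tau_N$ as the first time that every product $X_i^N X_{i+1}^N$ has dropped below $\eps^2 N^2$ (for a suitable $\eps$, or rather a vanishing sequence $\eps_N$), show via a supermartingale / Gronwall argument on $\E[\varphi(\overline X_N(\tau_N\wedge t))]$ that this happens by a time of order $(\log N)/N^2$ with probability tending to one — hence $\tau_N\Rightarrow 0$ — and identify the limiting configuration $\beta\in H_0$ by passing to the limit in the conserved quantities: since mass transfer in the bulk is deterministic bookkeeping once the firing happens, the odd/even alternating structure forces $\beta$ to be the configuration in which each macroscopic island of mass collapses onto a single index, with the precise value of $\beta$ determined by which indices carried mass in $\alpha$ and the parity constraints. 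One then needs the transferred mass $o(N)$ on the minor coordinates to remain $o(N)$, which follows from the same mass bound applied coordinate-wise.

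The main obstacle I anticipate is controlling the process when several consecutive coordinates are macroscopic at once, so that the depletion of $x_i x_{i+1}$ at index $i$ is partially refueled by the depletion at index $i-1$ feeding $x_{i+1}$ (the reaction $S_{i-1}{+}S_i\rightharpoonup S_i{+}S_{i+1}$ increases $x_{i+1}$). In a long run of macroscopic coordinates this creates a cascade, and a naive bound on $\varphi$ need not be monotone. The resolution is to process the overlaps from the right-most macroscopic index leftward (or to use a weighted Lyapunov function $\sum_i c_i x_i x_{i+1}$ with geometrically increasing $c_i$ so that the loss term dominates the refuel term), which is exactly the kind of bookkeeping that makes the argument "cumbersome" for general $m$; I would present it at the level of the Lyapunov inequality $\Omega_N(\varphi)(x)\le -c N^2 \varphi(x) + C N$ valid on the mass-bounded region, deduce the hitting-time estimate for $\tau_N$, and leave the identification of the exact vector $\beta$ to a short separate lemma using the conservation laws of the CRN.
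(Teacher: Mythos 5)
Your overall strategy (show that on a fast timescale the quadratic bulk reactions drive the process into a neighbourhood of $H_0$, then identify the limit point) is the right one, but two of your central quantitative claims fail. First, the timescale is wrong. A product $X_i^NX_{i+1}^N$ of order $N^2$ makes the corresponding reaction fire at rate $O(N^2)$, but each firing moves a coordinate by $\pm 1$; to deplete a coordinate of order $N$ you need $O(N)$ firings, i.e.\ a time window of order $1/N$, not $1/N^2$. Over a window of length $O((\log N)/N^2)$ only $O(\log N)$ reactions occur and nothing macroscopic can change, so your proposed $\tau_N$ cannot do what you claim. The paper works on the timescale $t/N$: it sets $(\widetilde X_N(t))=(X_N(t/N)/N)$, invokes the functional LLN (Proposition~13 of \citet{LR23}) to get convergence to the ODE system~\eqref{ODE1}, and takes $\tau_N=t_0/N$ for a fixed large $t_0$. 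Second, your Lyapunov inequality $\Omega_N(\varphi)(x)\le -cN^2\varphi(x)+CN$ cannot hold even dimensionally: for the reaction $x\mapsto x+e_{i+1}-e_{i-1}$ one computes $\Delta\varphi=x_{i+2}-x_{i-2}$ (up to boundary terms), so $\Omega_N(\varphi)(x)=\sum_i\kappa_i x_{i-1}x_i\,(x_{i+2}-x_{i-2})+\cdots$ is a \emph{cubic} polynomial, of size $O(N^3)$ on the mass-bounded region, while $N^2\varphi$ is $O(N^4)$; moreover its sign is not controlled (the term $x_{i+2}-x_{i-2}$ is exactly the refuelling cascade you flag, and the weighted variant $\sum c_i x_ix_{i+1}$ does not remove the positive cross term $c_{i+1}\kappa_i x_{i-1}x_ix_{i+2}$). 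So the key supermartingale step of your argument is not established, and the cascade obstacle you correctly identify is left unresolved rather than resolved.

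The paper sidesteps all of this by doing the analysis at the level of the limiting ODE rather than with a stochastic Lyapunov function: after the hydrodynamic limit, one only needs to show the deterministic flow $(x(t))$ of~\eqref{ODE1} converges to some $\beta\in H_0$ as $t\to\infty$. This is done by an elementary induction: if $\alpha_1,\alpha_2>0$ then all coordinates are strictly positive for $t>0$, the partial sums $\sum_{2i+1\le k}x_{2i+1}(t)$ and $\sum_{2i\le k}x_{2i}(t)$ are decreasing, hence every $x_i(t)$ converges, and passing to the limit in the ODEs forces $\beta_i\beta_{i+1}=0$. Note also that the convergence to $H_0$ is only asymptotic in $t$, not attained at a finite (or $(\log N)$-type) hitting time, which is another reason your finite-time hitting estimate is not the right formulation. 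Your identification of $\beta$ via "parity and conservation laws" is a plausible heuristic but is not a proof that each individual coordinate converges; the monotone-partial-sums argument is the missing ingredient.
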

In Section~\ref{TN}, there is an analogous result for the CRN with three nodes.  We show that there is a convenient stopping time, possibly depending on $N$, when the coordinates are arbitrarily close to the set of states with the ``correct'' orders of magnitude.
\begin{proof}
Define the scaled process
\[
\left(\widetilde{X}_N(t)\right)=\left(\frac{X_N(t/N)}{N}\right).
\]
Proposition~13 of~\citet{LR23} gives that the sequence of processes $(\widetilde{X}_N(t))$ is converging in distribution to $(x(t)){=}(x_i(t))$ the solution of the system of ODEs, for $2{<}i{<}m$, 
\begin{equation}\label{ODE1}
\begin{cases}
\dot{x_1}(t){=}{-}\kappa_2 x_{1}(t)x_{2}(t),  \; \dot{x_2}(t){=}{-}\kappa_3x_{2}(t)x_{3}(t), \\
\dot{x_i}(t){=}\kappa_{i-1}x_{i-2}(t)x_{i-1}(t){-}\kappa_{i+1}x_i(t)x_{i+1}(t),\\
\dot{x_m}(t){=}\kappa_{m-1}x_{m-2}(t)x_{m-1}(t).
\end{cases}
\end{equation}
We now show that $(x(t))$ converges to $\beta{\in}H_0$ as $t$ gets large. 

If at $t_0{>}0$, $\max_{i}|x_i(t_0){-}\beta_i|{<}\eta/2$, taking $\tau_N{=}t_0/N$, we have the desired result. 
For the convergence of $(x(t))$ to $\beta{\in}H_0$, we proceed by induction on the number of species. 
When $\alpha_1{=}0$, from Relations~\eqref{ODE1} we have $(x_1(t)){\equiv}(0)$.  Otherwise, if  $\alpha_1{>}0$ and $\alpha_2{=}0$, then, clearly, $(x_1(t)){\equiv}(\alpha_1)$  and $(x_2(t)){\equiv}(0)$. 

It is therefore enough to consider the case  $\alpha_1{>}0$ and $\alpha_2{>}0$. 
We have that $x_k(t){>}0$ for all $t{>}0$ and $k{\in}\{1, \ldots, m\}$. Indeed, this is true for $k{=}1$, $2$ and then,  by induction on $k{\in}\{3,\ldots,m\}$, for all the other indices.
Similarly, for all $2\leq k{<}m$, the functions
\[
\left(\sum_{i{\ge}0:2i+1\le k} x_{2i+1}(t)\right) \text{ and } \left(\sum_{i{\ge}1:2i\le k} x_{2i}(t)\right)
\]
are  (strictly) decreasing.  Again by induction on $1{\le}i{<}m$, the function $(x_i(t))$ has therefore a limit $\beta_i$ when $t$ goes to infinity and we also obtain the convergence of $(x_m(t))$ when $t$ goes to infinity, to a limit $\beta_m{>}0$. 
Relations~\eqref{ODE1} give the identity $\beta_i\beta_{i+1}{=}0$ for all $1{\leq}i{<}m$. The proposition is proved. 
\end{proof}
\begin{proposition}\label{PropMMI}
  For $p{\ge}1$,  let $(Y_i(t))$ be a Markov process on $\N^p$ with $Q$-matrix $R_{\lambda,\mu}{=}(r_{\lambda,\mu}(y,z))$ is such that, for $y{=}(y_i){\in}\N^p$ and $1{\le}i{\le}p$,
  \[
  \begin{cases}
    r_{\lambda,\mu}(y,y{+}e_1){=}\lambda, \quad r_{\lambda,\mu}(y,y{-}e_p){=}\mu_p y_p,\\
    r_{\lambda,\mu}(y,y{+}e_{i+1}{-}e_i){=}\mu_i y_i,
  \end{cases}
    \]
  where $e_i$ is the $i$th unit vector of $\N^p$ and $\mu{=}(\mu_i, i{=}1,\ldots,p)$ and $\lambda$ are positive constants. 
  \begin{enumerate}
  \item The invariant distribution of $(Y(t))$ is the product of  $p$ Poisson distributions with respective parameters $\lambda/\mu_i$;
  \item If, for $N{\ge}1$,  $(Y^N_i(t))$ is the Markov process with $Q$-matrix $R$ and initial state $(y^N_i)$ such that the sequence $(y^N_i/N)$ converges to $(0)$,
    \begin{itemize}
      \item For any $\eps{>}0$,
    \begin{equation}\label{CoupK}
 \sup_{N\ge 1}   \sup_{t{>}\eps}\E\left(\sum_{i=1}^pY^N_i(Nt)\right)<{+}\infty;
    \end{equation}
  \item The relation
    \[
    \lim_{N\to+\infty}\left(\frac{Y^N(Nt)}{N}\right)=(0)
    \]
    holds for the convergence in distribution. 
    \end{itemize}
  \end{enumerate}
\end{proposition}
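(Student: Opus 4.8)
The plan is to exploit the fact that $(Y(t))$ is a \emph{tandem of $M/M/\infty$ queues}: particles enter node~$1$ at rate $\lambda$, spend an $\mathrm{Exp}(\mu_i)$ sojourn at node~$i$ and then move to node~$i{+}1$, finally leaving after node~$p$; since every station has infinitely many servers, distinct particles move \emph{independently}, and by the memoryless property each of the $y_k$ particles initially present at node~$k$ still has a fresh $\mathrm{Exp}(\mu_k)$ residual sojourn there. One first checks that the infinitesimal generator of this particle system coincides with $R_{\lambda,\mu}$, so the two descriptions have the same law. Part~(a) is then the classical product-form property of such networks: each node is crossed by a flow of rate $\lambda$, so either a Burke-type output argument applied inductively (the output of a stationary $M/M/\infty$ queue with Poisson($\lambda$) input is a Poisson process of rate $\lambda$), or a direct check that $\pi(y){=}\prod_{i=1}^p e^{-\rho_i}\rho_i^{y_i}/y_i!$ with $\rho_i{=}\lambda/\mu_i$ solves $\sum_z\pi(z)r_{\lambda,\mu}(z,y){=}\pi(y)\sum_z r_{\lambda,\mu}(y,z)$ --- the identity $\mu_i\rho_i{=}\lambda$ makes both sides equal to $\pi(y)(\lambda{+}\sum_i\mu_i y_i)$ --- gives the invariant distribution.

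For part~(b), I would write $S^N(t){=}\sum_{i=1}^p Y_i^N(t)$ for the total number of particles and decompose $S^N(t){=}I^N(t){+}R^N(t)$, where $I^N(t)$ counts the initial particles still present at time $t$ and $R^N(t)$ those that arrived in $(0,t]$ and are still present. Using that particles move independently, with $(E_i)$ independent, $E_i{\sim}\mathrm{Exp}(\mu_i)$, one has $\E(I^N(t)){=}\sum_{k=1}^p y_k^N\,\P(\sum_{i=k}^p E_i{>}t)\le C e^{-\delta t}\sum_k y_k^N$ for constants $C,\delta{>}0$ depending only on $\mu$ (a sub-sum of the $E_i$ is stochastically dominated by $\sum_i E_i$, which has an exponential tail), and $\E(R^N(t)){=}\lambda\int_0^t\P(\sum_i E_i{>}t{-}s)\diff s\le\lambda\sum_i 1/\mu_i{<}\infty$. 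Hence $\E(S^N(t))\le\lambda\sum_i 1/\mu_i{+}Ce^{-\delta t}\sum_k y_k^N$ for every $t\ge0$, so for $t{>}\eps$, $\E(S^N(Nt))\le\lambda\sum_i 1/\mu_i{+}Ce^{-\delta N\eps}\sum_k y_k^N$; since $y_k^N/N\to0$ gives $\sum_k y_k^N{=}O(N)$, the last term tends to $0$ and the bound is uniform over $N\ge1$ and $t{>}\eps$, which is~\eqref{CoupK}. Markov's inequality then yields $Y^N(Nt)/N\to0$ in probability for each fixed $t{>}0$, hence, together with $Y^N(0)/N\to0$, the finite-dimensional convergence to $(0)$.

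To upgrade this to convergence in distribution at the level of processes --- where, the limit being the constant continuous path $(0)$, it is enough to show $\sup_{t\le T}S^N(Nt)/N\to0$ in probability for every $T{>}0$ --- I would note first that $I^N$ is non-increasing, so $\sup_{t\le T}I^N(Nt){=}I^N(0){=}o(N)$. For $R^N$, I would couple it with the \emph{stationary} occupation process $Z$ of the same $M/G/\infty$ queue, service time $\sum_i E_i$ (integrable), built from a two-sided Poisson input of rate $\lambda$ whose positive part is the arrival stream of $R^N$; then $R^N(s)\le Z(s)$ for all $s$, each $Z(s)$ has law $\Pois(\lambda\sum_i 1/\mu_i)$, and $\sup_{s\in[j,j+1]}Z(s)\le Z(j){+}A_j$ with $A_j{\sim}\Pois(\lambda)$. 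A union bound over the $O(N)$ indices $j\le\lceil NT\rceil$ together with the Chernoff bound for Poisson tails gives $\P(\sup_{s\le NT}R^N(s)\ge\eta N)\to0$ for every $\eta{>}0$; adding the two contributions concludes part~(b).

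The one genuinely delicate point is this last step: turning the static bound on $\E(S^N(\cdot))$ into a \emph{uniform} control of $S^N$ over the long horizon $[0,NT]$. The representation by independent particles and the domination of $R^N$ by the stationary $M/G/\infty$ occupation are what make it short; without them one would have to run a Gr\"onwall-type argument along the chain of coordinates, which is feasible but heavier. Verifying carefully that the independent-particle description and the chain $R_{\lambda,\mu}$ define the same Markov process --- in particular the memoryless treatment of the initial particles --- is a routine but necessary preliminary.
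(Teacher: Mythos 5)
Your proof is correct and follows essentially the same route as the paper: both rest on the independent-particle (tandem infinite-server queue) representation, the decomposition of the population into initial particles and post-$0$ arrivals, the exponential-tail bound for the residual sojourn of the initial particles, and Poisson bounds for the arrivals. The only cosmetic differences are that you treat the total number of new particles as a single $M/G/\infty$ queue with service time $\sum_i E_i$ where the paper views each node separately as an $M/M/\infty$ queue, and that you spell out the uniform control of $\sup_{t\le T}S^N(Nt)$ which the paper leaves to the cited properties of the $M/M/\infty$ queue.
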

\begin{proof}
The statement on the invariant distribution is straightforward to prove.

The Markov process can be described as a kind of Ehrenfest model with $p$ urns. The distribution of the sojourn time of a particle in the $i$th urn, $1{\le}i{\le}p$ is exponential with parameter $\mu_i$, after which it moves to urn $i{+}1$. The process $(Y_i^N(t))$ can be expressed as the sum of two independent processes, $(Y_{i,0}^N(t){+}Y_{i,1}^N(t))$, where $Y_{i,0}^N(t)$, resp. $Y_{i,1}^N(t)$ is the number of initial particles, resp. new particles (i.e. arrived after time $0$),  present in the $i$th urn at time $t$. The initial state of $(Y_{i,1}^N(t))$ is $(0)$ in particular. 

For $1{\le}i{\le}p$, we denote by $(E_{i,k}^0,k{\ge}1)$ and $(E_{i,k}^0,k{\ge}1)$  i.i.d. independent sequences of exponential random variables with parameter $\mu_i$. The process of the total  number of initial particles present in the system  has the same distribution as
\[
\left(\sum_{i=1}^p \sum_{k=1}^{y_i^N} \ind{E_{i,k}^0{+}E_{i+1,k}^0{+}\cdots{+}E_{p,k}^0\ge t}\right).
\]
The two assertions of 2) of the proposition for $(Y_{i,0}^N(t)$ are easily checked. 

If $(t_n)$ is a Poisson process on $\R_+$ with parameter $\lambda$, then the arrivals of particles at the $i$th urn has the same distribution as $(t_n{+}E_{1,n}{+}E_{2,n}{+}\cdots{+}E_{i-1,n})$, i.e. a Poisson process with parameter $\lambda$. Consequently,  the process $(Y_{i,1}^N(t))$ has the same distribution as the process of an $M/M/\infty$ queue of  Section~\ref{MMISec}, starting empty with arrival rate $\lambda$ and service rate $\mu_i$. 
The two assertions of 2) are a consequence of the properties of this model recalled in Section~\ref{MMISec}. 

\end{proof}

\begin{proposition}[Coupling]\label{PropCoup}
If $\kappa_0{=}0$ and   $m{=}2p{+}1$ and under Condition~\eqref{IntCond1}, if $\alpha{\in}H_0$ is such that $\alpha_{2i}{=}0$ for all $1{\le}i{\le}p$ and $\alpha_{2i{+}1}{>}0$, for all $0{\le}i{\le}p$, then there exist $\eta{>}0$ and  a coupling of  $(X_N(t)){=}(X_{k}^N(t),1{\le}k{\le}m)$ and $(Y_{i}(Nt),1{\le}i{\le}p)$, where $(Y(t))$ is a Markov process with $Q$-matrix $R_{\lambda,\mu}$ defined in Proposition~\ref{PropMMI}, for some $\lambda{>}0$, with $\mu{=}(\mu_i){=}\kappa_{2i+1}\eta$,  such that the relation
\begin{equation}\label{CoupIneq}
\sum_{k{=}1}^p X_{2k}^N(t)\le \sum_{k{=}1}^p Y_k^N(Nt)
\end{equation}
holds for all  $t{\le}T_N$, with
\begin{equation}\label{TN1}
T_N=\inf\left\{t{>}0: \exists i{\in}\{1,\ldots, p\}, X_{2i+1}^N(t){<}\eta N\right\}. 
\end{equation}
\end{proposition}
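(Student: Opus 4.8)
\emph{Proof proposal.} The plan is to obtain~\eqref{CoupIneq} from a stronger pathwise comparison, run in the partial-sum order, between the \emph{even chain} $(X_2^N,X_4^N,\ldots,X_{2p}^N)$ of $(X_N(t))$ and the tandem network $(Y_N(Nt))$, realised through an explicit coupling on the product space $\N^m{\times}\N^p$. The constants are chosen as follows. Since $\kappa_0{=}0$, the coordinate $(X_1^N(t))$ is non-increasing, so $X_1^N(t){\le}X_1^N(0)$ for all $t$; by~\eqref{IntCond1} the sequence $(X_1^N(0)/N)$ is bounded, so one may fix $\lambda{=}\kappa_1\sup_{N\ge1}X_1^N(0)/N{>}0$, which satisfies $\kappa_1X_1^N(t){\le}\lambda N$ for all $N,t$. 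Fix $\eta\in\bigl(0,\min_{1\le i\le p}\alpha_{2i+1}\bigr)$, set $\mu_i{=}\kappa_{2i+1}\eta$, and let $(Y_N(t))$ be the $R_{\lambda,\mu}$-Markov process of Proposition~\ref{PropMMI} started from $Y_i^N(0){=}X_{2i}^N(0)$; then $Y_i^N(0)/N{\to}\alpha_{2i}{=}0$, so Proposition~\ref{PropMMI} applies to it. From~\eqref{SDEm}, the even coordinates form a tandem system: particles enter node~$2$ at rate $\kappa_1X_1^N(t)$, and a particle at node~$2k$ is routed to node~$2(k{+}1)$ --- or out of the system when $k{=}p$ --- at rate $\kappa_{2k+1}X_{2k+1}^N(t)$.

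The coupled jump process is built by superposition. Arrivals: a common stream of rate $\lambda N$, each atom of which is also an arrival at node~$2$ of $X$ with probability $\kappa_1X_1^N(t{-})/(\lambda N)$ (admissible by the choice of $\lambda$), so that every arrival of $X$ is matched with one of $Y$. Routing: for each level $1{\le}k{\le}p$, a ``shared'' routing of the $X$-particle at node~$2k$ together with a $Y$-particle at node~$k$ at rate $\min\bigl(\kappa_{2k+1}X_{2k}^NX_{2k+1}^N,\,\mu_kNY_k^N\bigr)$, flanked by the residual routings of rates $\bigl(\kappa_{2k+1}X_{2k}^NX_{2k+1}^N{-}\mu_kNY_k^N\bigr)^+$ (of $X$ only) and $\bigl(\mu_kNY_k^N{-}\kappa_{2k+1}X_{2k}^NX_{2k+1}^N\bigr)^+$ (of $Y$ only). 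All reactions of $(X_N(t))$ not touching the even coordinates are left unchanged. Summing the rates of each elementary move shows the two marginals are, respectively, $(X_N(t))$ and $(Y_N(Nt))$; no explosion occurs since the total particle count of each marginal is dominated by a pure-birth process of bounded rate.

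It then suffices to show that the region $\mathcal R{=}\{(x,y):\sum_{\ell=1}^jx_{2\ell}{\le}\sum_{\ell=1}^jy_\ell,\ 1{\le}j{\le}p\}$ is never left on $[0,T_N)$; writing $w_j(t){=}\sum_{\ell=1}^jX_{2\ell}^N(t)$ and $z_j(t){=}\sum_{\ell=1}^jY_\ell^N(Nt)$, the case $j{=}p$ is~\eqref{CoupIneq}, and $\mathcal R$ holds at $t{=}0$ by the choice of $Y_N(0)$. Every arrival raises all the $w_j$ and $z_j$ by one, or only the $z_j$'s; a shared routing at level $k$ lowers $w_k$ and $z_k$ by one; an $X$-only routing lowers only $w_k$; all of these keep $\mathcal R$. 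The sole move that could leave $\mathcal R$ is a $Y$-only routing at a level $k$ with $w_k{=}z_k$, which would make $w_k{=}z_k{+}1$; but on $\mathcal R$ and for $t{<}T_N$ one has $X_{2k}^N{=}w_k{-}w_{k-1}{\ge}z_k{-}z_{k-1}{=}Y_k^N$ (with $w_0{=}z_0{=}0$) and $X_{2k+1}^N(t){\ge}\eta N$, hence $\kappa_{2k+1}X_{2k}^NX_{2k+1}^N{\ge}\mu_kNX_{2k}^N{\ge}\mu_kNY_k^N$ and the rate of that move is $0$. So $\mathcal R$ is absorbing on $[0,T_N)$, and by right-continuity~\eqref{CoupIneq} holds at $t{=}T_N$ as well.

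I expect this last verification to be where the real content lies. A comparison of the totals $\sum_kX_{2k}^N$ alone cannot be propagated --- mass concentrated near node~$2$ on one side and near node~$2p$ on the other would break it --- which forces the comparison into the full partial-sum order; and the routing events must then be matched level by level so that the faster service of $X$ at node~$2k$, guaranteed by $X_{2k+1}^N{\ge}\eta N$ for $t{<}T_N$, exactly dominates the dangerous $Y$-only routings on each boundary $\{w_k{=}z_k\}$. The roles of the hypotheses are then visible: $\kappa_0{=}0$ bounds the arrival rate into the even chain (it makes $X_1$ non-increasing), the parity condition $m{=}2p{+}1$ makes the even coordinates an honest tandem of $M/M/\infty$-type nodes with node $m$ odd, and $\alpha_{2i+1}{>}0$ makes $T_N$ typically positive.
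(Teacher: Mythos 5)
Your proposal is correct and follows essentially the same route as the paper: start $Y$ from the even coordinates of $X$, bound the arrival rate using $\kappa_0{=}0$, and propagate the partial-sum order $\sum_{\ell\le k}X_{2\ell}^N\le\sum_{\ell\le k}Y_\ell^N$ by checking that on the boundary $\{w_k{=}z_k\}$ the inequalities $X_{2k}^N{\ge}Y_k^N$ and $X_{2k+1}^N{\ge}\eta N$ make the $X$-service rate dominate the $Y$-service rate at level $k$. The only difference is cosmetic — you realise the coupling by superposition with shared and residual streams, while the paper drives both systems by the same Poisson processes on $\R_+^2$ (nested intervals), which is the same coupling written differently.
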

\begin{proof}
We fix  $\eta{=}\min\{\alpha_{2i+1}/2:1{\le}i{\le}p\}$ and $\lambda{>}0$ such that the relation
\[
X_{1}^N(0){+}X_{3}^N(0){+}\cdots{+}X_{2p+1}^N(0) \le \lambda N
\]
holds for all $N{\ge}1$. The condition $\kappa_0{=}0$ implies that $X_{1}^N(t){+}X_{3}^N(t){+}\cdots{+}X_{2p+1}^N(t)$ is also bounded by $\lambda N$ for all $t{>}0$.

We can represent the process $(Y_N(t)){=}(Y_i^N(t))$ as the solution of the following SDEs, for $2{\leq} i{\leq} p$: 
\[
	\begin{cases}
		\diff Y_1^N(t)&= {\cal P}_1 ((0, \kappa_1\lambda N), \diff t)- {\cal P}_3((0, \kappa_3\eta N Y_1^N(t{-})), \diff t),\\
		\diff Y_i^N(t)&= {\cal P}_{2i-1}((0, \kappa_{2i-1}\eta N Y_{i-1}^N(t{-})\eta N), \diff t)\\
		&\hspace{3cm} - {\cal P}_{2i+1}((0, \kappa_{2i+1}\eta N Y_i^N(t{-})), \diff t).
	\end{cases}
\]
with $(Y_N(0)){=}(X_{2i}^N(0)){=}(x_{2i}^N)$, where the $\cal{P}^i$ are introduced in the SDE~\eqref{SDEm}. 

For convenience, the two processes are described in terms of a queueing system, they are respectively referred to as the $X$-system for $(X_{2i}^N(t),1{\le}i{\le}2p{+}1)$ and the $Y$-system, for the process $(Y_i(t),1{\le}i{\le}p)$.  Initially there are $x_{2i}^N$ customers in the $i$th queue for both systems. 

External customers enter the system, at the rate $\kappa_1X_1^N(t)$ in the $X$-system, and $\kappa_1 \lambda N$ in the $Y$-system, therefore more customers enter the $Y$-system than the $X$-system. 
Furthermore, since we choose $\eta$ such that $X_{2i+1}(t){\ge} \eta N$ for all $1{\le}i{\le} m$, the service rate in every queue of the $Y$-system is smaller than the service rate in the $X$-system, meaning that a customer of the $Y$-system needs more time to run through the system than a customer of the $X$-system. This can be expressed with the following result: for $k\leq p$, for all $t\geq 0$, 
\begin{equation}\label{SCouple}
	S_{Y,k}^N(t)\steq{def}\sum_{i=1}^k Y_i^N(t) \geq \sum_{i=1}^k X_{2i}^N(t)\steq{def} S_{X,k}^N(t). 
\end{equation}

This can be proven by induction, using simple coupling arguments. The cases $k=1$ is straightforward. 
Assume that we have the result for $k-1$, the only Poisson processes that change $(S_k^N(t))$ are $\cal{P}_1$ and $\cal{P}_{2k+1}$.  Note that $\cal{P}_1$ is more likely for $(S_{Y,k}(t))$ than $(S_{X, k}(t))$, so that it preserves Relation~\eqref{SCouple}.

For $t{>}0$, if Relation~\eqref{SCouple} holds on $[0,t)$ and a jump of $\cal{P}_{2k+1}$ occurs at time $t$, there are  two possible situations:
\begin{itemize}
	\item If $S_{Y,k}(t{-})\geq S_{X,k}(t{-})+1$, the relation will still be valid after one jump. 
	\item If $S_{Y,k}(t{-})=S_{X,k}(t{-})$, since $S_{Y,k-1}(t{-}){\geq} S_{X,k-1}(t{-})$ by hypothesis of induction we have $Y_k(t{-}){\leq} X_{2k}(t{-})$, along with the fact that $X_{2k+1}(t{-}){\leq}\eta N$, we know that 
	\[
		\cal{P}_{2k+1}((0, \kappa_{2k+1} X_{2k}^N(t{-}) X_{2k+1}^N(t{-})), \diff t)\geq \cal{P}_{2k+1}((0, \kappa_{2k+1}\eta N Y_{k}^N(t{-})), \diff t), 
	\]
	which proves that Relation~\eqref{SCouple} is still true after the jump. 
\end{itemize} 
This concludes the proof of Proposition~\ref{PropCoup}. 
\end{proof}

\begin{theorem}\label{TheoImp}
If $m{=}2p{+}1$ and $\alpha{\in}H_0$ such that $\alpha_{2i}{=}0$ and $\alpha_{2i{-}1}{>}0$, for all $1{\le}i{\le}p{+}1$ then, under Condition~\eqref{IntCond1}, for the convergence in distribution, the relations
  \[
  \lim_{N\to+\infty} \left(\frac{X_{2i}^N(t)}{N},1{\le}i{\le}p\right){=}(0), 
    \lim_{N\to+\infty} \left(\frac{X_{2i+1}^N(t)}{N},0{\le}i{\le}p\right)=(\ell(t))=(\ell_{2i+1}(t))
  \]
  hold, where $(\ell(t))$ is the solution of the system of ODEs, for $1{\le}i{\le}p{-}1$,
\begin{equation}\label{ODEImp}
\begin{cases}
\dot{\ell}_{1}(t)&=  \displaystyle{-}\kappa_{1}\frac{\kappa_2}{\kappa_{3}}\frac{\ell_{1}(t)}{\ell_{3}(t)}\ell_{1}(t),\\
\dot{\ell}_{2i+1}(t)&=\displaystyle  \kappa_1\left(\frac{\kappa_{2i}}{\kappa_{2i+1}}\frac{\ell_{2i-1}(t)}{\ell_{2i+1}(t)}
{-}\frac{\kappa_{2(i+1)}}{\kappa_{2i+3}}\frac{\ell_{2i+1}(t)}{\ell_{2i+3}(t)}\right)\ell_{1}(t),\\
\dot{\ell}_{2p+1}(t)&=\displaystyle  \kappa_1\frac{\kappa_{2p}}{\kappa_{2p+1}}\frac{\ell_{2p-1}(t)}{\ell_{2p+1}(t)}\ell_{1}(t) {-}\kappa_{2p+2}\ell_{2p+1}(t),
\end{cases}
\end{equation}
with initial point $(\alpha_{2i+1})$. The function  $(\ell(t))$ converges to $(0)$ as $t$ goes to infinity. 
\end{theorem}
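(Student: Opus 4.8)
The proof is a stochastic averaging principle in which the odd-indexed coordinates $(X^N_{2i+1}(t)/N)$ play the role of the ``slow'' variables --- they are of order $N$ and evolve on the normal timescale --- while the even-indexed coordinates $(X^N_{2i}(t))$ are the ``fast'' variables, of order $O(1)$ but performing jumps at rates of order $N$. The starting observation is that a reaction $x\mapsto x{+}e_{j+2}{-}e_j$ never mixes the two parity classes, so $\sum_k X^N_{2k-1}(t)$ can only increase through the external input at rate $\kappa_0=O(1)$ and decrease at node $m$, whence all odd coordinates stay of order $N$ on compact intervals; and $\sum_k X^N_{2k}(t)$ is a birth--death-type quantity with birth rate $\kappa_1X^N_1(t)=O(N)$ and death rate $\kappa_{2p+1}X^N_{2p}(t)X^N_{2p+1}(t)$. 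Freezing the slow variables at a value $\ell=(\ell_{2i+1})$, the fast vector $(X^N_2,\dots,X^N_{2p})$ becomes, after dividing its rates by $N$, a Markov process with $Q$-matrix of the type $R_{\lambda,\mu}$ of Proposition~\ref{PropMMI} with $\lambda=\kappa_1\ell_1$ and $\mu_i=\kappa_{2i+1}\ell_{2i+1}$; by part~(1) of that proposition its invariant law is $\bigotimes_{i=1}^p\Pois(\kappa_1\ell_1/(\kappa_{2i+1}\ell_{2i+1}))$, with means $\kappa_1\ell_1/(\kappa_{2i+1}\ell_{2i+1})$. Substituting these ergodic averages for the even coordinates into the compensated equations of $(X^N_{2i+1}(t)/N)$ read off from~\eqref{SDEm} --- and discarding the $\kappa_0=O(1)$ input at node~$1$ --- produces exactly the right-hand sides of~\eqref{ODEImp}.

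To make this rigorous I would proceed in four steps. \emph{Step 1 (a priori bounds on the fast variables).} Using Proposition~\ref{PropCoup}, applied after the harmless reduction to $\kappa_0=0$ and, since its conclusion is only valid up to the stopping time~\eqref{TN1}, iterated over successive subintervals of a fixed $[0,T]$ with the constant $\eta$ refreshed from the current value of the odd coordinates, one dominates $\sum_k X^N_{2k}(t)$ by $\sum_k Y^N_k(Nt)$; then Proposition~\ref{PropMMI}(2) --- in particular the uniform bound~\eqref{CoupK}, and its analogue for second moments since the $M/M/\infty$ marginals are Poisson --- gives $\sup_N\sup_{t\ge\delta}\E(\sum_k X^N_{2k}(t)^2)<\infty$ for every $\delta>0$, while a crude pure-birth bound gives $\E\int_0^\delta\sum_k X^N_{2k}(s)\,\diff s=o(N)$. \emph{Step 2 (the slow variables are tight with vanishing martingale part).} Writing $X^N_{2i+1}(t)/N$ from~\eqref{SDEm} as initial value plus drift plus martingale, the predictable quadratic variation of the martingale is $N^{-2}\int_0^t(\text{jump rates})\,\diff s$, whose integrand is $O(N)$ times an even coordinate; by Step~1 it is $O(\delta^2{+}(t{-}\delta)/N)\to0$, so the martingale parts vanish in probability, uniformly on compacts. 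The drift integrals $\int_0^t\kappa_{2i}(X^N_{2i-1}(s)/N)X^N_{2i}(s)\,\diff s$ are, again by Step~1, equicontinuous in probability; combined with the fact that the jumps of $X^N_{2i+1}/N$ are $\pm1/N$ this yields tightness of $(X^N_{2i+1}(\cdot)/N)$ in $\mathcal D([0,T],\R)$. The same estimates show that $(X^N_{2i}(\cdot)/N)$ is tight and, being of order $1/N$ in mean for $t\ge\delta$, converges in distribution to $0$.

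\emph{Step 3 (identification of the fast ergodic average).} Let $\Gamma_N(\diff s,\diff z)$ on $[0,T]\times\N^p$ be the occupation measure of $(X^N_2(s),\dots,X^N_{2p}(s))$, which is tight by Step~1. For a bounded $g$ on $\N^p$, Dynkin's formula for the time-inhomogeneous generator of the fast vector reads $g(Z^N(t))-g(Z^N(0))=N\int_0^t G_{\overline{X}_N(s)}g(Z^N(s))\,\diff s+M^N_g(t)$, where $G_\ell$ is the $R_{\lambda,\mu}$-generator with $\lambda=\kappa_1\ell_1$, $\mu_i=\kappa_{2i+1}\ell_{2i+1}$ and $\overline{X}_N(s)$ is the vector of scaled odd coordinates; dividing by $N$, both the left-hand side and $M^N_g(t)/N$ vanish in probability (Step~1), so $\int_0^t G_{\overline{X}_N(s)}g(Z^N(s))\,\diff s\to0$. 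Replacing $\overline{X}_N$ by a limit point $\ell$ of the slow variables (using continuity of $\ell\mapsto G_\ell g$), any limit point $\Gamma(\diff s,\diff z)=\diff s\,\gamma_s(\diff z)$ of $(\Gamma_N)$ satisfies $\int G_{\ell(s)}g\,\diff\gamma_s=0$ for a.e.\ $s$ and all $g$ in a determining class, i.e.\ $\gamma_s$ is invariant for $G_{\ell(s)}$; by Proposition~\ref{PropMMI}(1) and irreducibility, $\gamma_s=\bigotimes_i\Pois(\kappa_1\ell_1(s)/(\kappa_{2i+1}\ell_{2i+1}(s)))$. \emph{Step 4 (the limiting ODE).} Passing to the limit in the drift equations of Step~2, using Step~3 and the uniform integrability of $z_i$ under $\Gamma_N$ (second-moment bound from Step~1), every joint limit point $(\ell_{2i+1}(t))$ solves the integral form of~\eqref{ODEImp}. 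An elementary analysis of~\eqref{ODEImp} --- $\dot\ell_{2p+1}\ge-\kappa_{2p+2}\ell_{2p+1}$ keeps the top coordinate positive, and downward induction keeps each $\ell_{2i+1}$ positive on $[0,\infty)$, while each partial sum $\sum_{k\le j}\ell_{2k+1}$ is non-increasing --- shows the field is locally Lipschitz on $[0,T]$, so the solution is unique, and that $\sum_k\ell_{2k+1}$ decreases with derivative $-\kappa_{2p+2}\ell_{2p+1}$, forcing $\ell_{2p+1}\to0$ and, propagating down the partial sums, $\ell(t)\to0$. Uniqueness promotes the tightness of Steps~2--3 to convergence in distribution of the whole sequence.

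The main obstacle is Step~1 together with its coupling to the degeneracy of~\eqref{ODEImp}: the domination of Proposition~\ref{PropCoup} holds only up to the time $T_N$ at which some odd coordinate drops below $\eta N$, so one must control the fast variables and keep the slow variables macroscopic \emph{simultaneously} --- a bootstrap that is transparent on a short interval but, since the lower bounds on the $\ell_{2i+1}$ degrade in time, must be carried across a partition of $[0,T]$ with the coupling constant $\eta$ reset at each step, and supplemented with the small-time (near-zero) estimates; getting this iteration right is the technical heart of the argument.
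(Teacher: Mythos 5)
Your proposal is correct and follows essentially the same route as the paper: reduction to $\kappa_0{=}0$, domination of the even coordinates by the tandem $M/M/\infty$ network of Propositions~\ref{PropMMI} and~\ref{PropCoup}, tightness and identification of the occupation measure of the fast variables as the product of Poisson laws with parameters $\kappa_1\ell_1(s)/(\kappa_{2i+1}\ell_{2i+1}(s))$ via the generator/martingale argument, passage to the limit in the drift equations, and the same downward-induction positivity analysis of~\eqref{ODEImp}. The only cosmetic difference is that you extend past the first interval by iterating the coupling with a refreshed $\eta$, whereas the paper argues via the maximal interval of existence of the ODE; these are equivalent bootstraps.
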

Hence from the large state $(\alpha_1,0,\alpha_3,0,\ldots,0,\alpha_m)N$, the process $(X^N(t))$ returns  on the normal timescale to a neighborhood of the origin along the curve $(\ell_1(t),0,\ell_3(t),0,\ldots,0,\ell_m(t))N$ of $\R_+^m$.
\begin{proof}
Since we are interested in the order of magnitude in $N$ of the components of the vector $(X_{2i+1}^N(t),0{\le}i{\le}p)$ on a finite time interval, the external arrivals with rate $\kappa_0$ do not play any role. Therefore, without a loss of generality, we can assume that $\kappa_0{=}0$, in particular $(X_{1}^N(t){+}X_{3}^N(t){+}\cdots{+}X_{2p+1}^N(t))$ is a non-increasing process by Relations~\eqref{SDEm}. The assumptions of Proposition~\ref{PropCoup} are therefore satisfied, the notations for $T_N$, $\eta$ and $\lambda$ in this proposition and its proof  are used. 

The proof is achieved in three steps. The main difficulty is controlling that the coordinates with an even index are $O(1)$ while the others are of the order of $N$.

\medskip
\noindent
{\sc Step~1. Tightness of the occupation measure}.\\
 Let  the {\em stopped occupation measure} $\mu_N$ on $\R_+{\times}\N^p$ is defined by
\[
\croc{\mu_N,f}=\int_{0}^{T_N} f(s,(X_{2i}^N(s),1{\le}i{\le}p))\diff s,
\]
for any $f{\in}{\cal C}_c(\R_+{\times}\N^p)$, where $T_N$ is defined by Relation~\eqref{TN1}. 
We now show that$(\mu_N)$ is tight for the convergence in distribution.

The tightness of  the sequence of random measures $(\mu_N)$  follows from Proposition~\ref{PropCoup}, Relation~\eqref{CoupK} of Proposition~\ref{PropMMI} and Lemma~1.3 of~\citet{Kurtz1992}. Lemma~1.4 of this reference gives that any limiting point of $\mu_\infty$ of a subsequence $(\mu_{N_k})$ can be expressed as
\begin{equation}\label{muI}
  \croc{\mu_\infty,f}=\int_{0}^{+\infty}\int_{\N^p}  f(s,x)\pi_s(\diff x) \diff s,
\end{equation}
for $f{\in}{\cal C}_c(\R_+{\times}\N^p)$,  where $(\pi_s)$ is an optional measure-valued process with values  in the subset of probability distributions on $\R_+{\times}\N^p$. 

If $f$ is a continuous function on $\R_+{\times}\N^p$ such that $|f(t,x)|{\le}C(t)\|x\|$ for a continuous function $(C(t))$, $x{=}(x_i){\in}\N^p$ and $\|x\|{=}x_1{+}\cdots{+}x_p$, then, for the convergence in distribution of processes, the relation
\begin{equation}\label{aux1}
\lim_{k\to+\infty}\left(\int_0^{T_{N_k}} f(s,(X_{2i}^{N_k}(s)))\diff s\right)=\left(\int_{0}^{+\infty}\int_{\N^p}  f(s,x)\pi_s(\diff x) \diff s\right).
\end{equation}
holds. The only (small) difficulty to show this result is the local behavior of $(X_N(t))$ at $t{=}0$. 
For $\eps{>}0$, we can show that 
\begin{equation}
\lim_{k\to+\infty}\left(\int_\eps^{T_{N_k}} f(s,(X_{2i}^{N_k}(s)))\diff s\right)=\left(\int_{\eps}^{+\infty}\int_{\N^p}  f(s,x)\pi_s(\diff x) \diff s\right).
\end{equation}
The proof is standard, by using the convergence of $(\mu_{N_k})$, the criterion of the modulus of continuity,  Theorem~7.3 of~\citet{Billingsley}, and an equivalent result to Relation~\eqref{CoupK}, which is easily proved: 
 \begin{equation}
	\sup_{N\ge 1}   \sup_{t{>}\eps}\E\left(\left(\sum_{i=1}^pY^N_i(Nt)\right)^2\right)<{+}\infty.
 \end{equation}
Now we have to show that for  $\eta$, $\eta'{>}0$, we can find $\eps{>}0$ and $k_0{\geq} 1$ such that for all $k{\geq}k_0$, 
\begin{equation}\label{eqClose0}
	\P\left(\left|\int_0^{\eps} f(s, (X_{2i}^{N_k}(s)))\diff s \right|>\eta \right)<\eta'. 
\end{equation}
We use the same notations as in the proof of Proposition~\ref{PropMMI}, and
\[
S_p^0\steq{def} E_{1,1}^0{+}\cdots{+}E_{p,1}^0.
\]
For $\eps{\in}(0,1)$ and  $\delta{>}0$, we have
\begin{multline*}
	\E\left(\left|\int_0^{\eps} f(s, (X_{2i}^{N_k}(s)))\diff s \right| \right)\leq\overline{C}\int_0^{\eps} \E(\|(X_{2i}(s))_i\|) \diff s\\
	\leq\overline{C}\lambda \E(S_p)\eps +\overline{C} \int_0^{\eps}\|(X_{2i}(0))_i\|\P\left( S_p^0\geq sN\right) \diff s\\
	\leq\overline{C}\lambda \E(S_p)\eps +\overline{C} \frac{\|(X_{2i}(0))_i\|}{\delta N}\E\left(e^{\delta S^0_p}\right)(1{-}e^{-\eps N\delta}),
\end{multline*}
with  $\overline{C}{=}\max\{C(t), t{\in} [0,1]\}$, which leads to Equation~\eqref{eqClose0}. By letting $\eps$ go to $0$, we obtain Relation~\eqref{aux1}.  

\medskip

\noindent
{\sc Step~2. A lower bound for $T_N$.}
    
For $1{\le}i{<}p$, let
\[
(D_{i}^N(t))\steq{def} \left(\int_{(0,t]}{\cal P}_{2i}\left(\left(0,\kappa_{2i}X_{2i-1}^N(s{-})X_{2i}^N(s{-})\right),\diff s\right)\right),
  \]
  then $D_i(t)$ is the number of molecules which have been transformed from $S_{2i-1}$ into $S_{2i+1}$ up to time $t$. When $t{<}T_N$, Proposition~\ref{PropCoup} and its proof give the relation
  \[
  D_i^N(t)\le   D_{Y,i}^N(t)\steq{def}\int_{(0,t]}{\cal P}_{2i}\left(\left(0,\kappa_{2i}\lambda N\|Y^N(Ns{-})\|\right),\diff s\right),
\]
Again standard arguments of stochastic calculus and the ergodic theorem for the Markov process $(Y(t))$  give the convergence in distribution
\[
\lim_{N\to+\infty} \left(\frac{D_{Y,i}^N(t)}{N}\right)=\left(\lambda^2\kappa_{2i}t\sum_{j=1}^p\frac{1}{\mu_j}\right).
\]
If $t_0$ is chosen so that for all $1{\le}i{<}p$, 
\[
	t_0{<} \frac{\alpha_{2i-1}}{2\lambda^2\kappa_{2i}\sum_{j=1}^p 1/\mu_j},
\]
and $t_0{<}\alpha_{2p+1}/ (2\lambda \kappa_{2p+2})$, then the sequence $(\P(T_N{\ge}t_0))$ converges to $1$. 

The SDEs for $(X_N(t))$ give the relation
\begin{multline}\label{aux2}
  \overline{X}_{2i+1}^N(t)=   \frac{x_{2i+1}^N}{N}+M_{i}^N(t)\\ {+}\kappa_{2i}\int_0^t  \overline{X}_{2i-1}^N(s)X_{2i}^N(s)\diff s
{-}\kappa_{2(i+1)}\int_0^t \overline{X}_{2i+1}^N(s)X_{2(i+1)}^N(s)\diff s,
\end{multline}
where $(M_{i}^N(t))$ is a martingale whose previsible increasing process is
\begin{multline*}
(\croc{M_{i}^N(t)})\\=\left(\frac{1}{N}\left(\kappa_{2i}\int_0^t  \overline{X}_{2i-1}^N(s)X_{2i}^N(s)\diff s{+}\kappa_{2(i+1)}\int_0^t \overline{X}_{2i+1}^N(s)X_{2(i+1)}^N(s)\diff s\right)\right).
\end{multline*}
On the time interval $[0,t_0]$, with Doob's Inequality,  since $\overline{X}_{2i\pm 1}^N(s){\le}\lambda$ for $s{\le}t_0$, the convergence in distribution~\eqref{aux1} gives that $(M_i^{N_k}(t))$ is converging in distribution to $(0)$. The criterion of the modulus of continuity,  Theorem~7.3 of~\citet{Billingsley},  with Relation~\eqref{aux2} shows that the sequence of processes $(\overline{X}^o_N(t)){\steq{def}}(\overline{X}_{2i+1}^N(t),0{\le}i{\le}p)$ is tight for the convergence in distribution on  $[0,t_0]$. Without loss of generality one can assume that the subsequence $(N_k)$ is such that  $(\mu_{N_k},(\overline{X}^o_{N_k}(t)))$ is converging to $(\mu_\infty,(\ell(t)))$, where $\mu_\infty$ has the representation~\eqref{muI}. 

\medskip

\noindent
    {\sc Step~3. Identification of $\mu_\infty$.}\\
In this part the convergence in distribution of processes refers implicitly to the time interval $[0,t_0]$. 
Let $g$  a function on $\N^p$ with finite support on $\N^p$. Relations~\eqref{SDEm} gives the relation, if $(X^e_N(t)){\steq{def}}(X_{2i}^N(t),1{\le}i{\le}p)$,
\begin{multline}\label{SDEg}
g\left(X^e_N(t)\right)=g\left(X^e_N(0)\right)+M_g^N(t)\\
{+}\kappa_{1}\int_0^tN\nabla_{e_{2}}(g)\left(X^e_N(s)\right)\overline{X}_{1}^N(s)\diff s\\
{+}\sum_{i=1}^{p}\kappa_{2i+1}\int_0^tN\nabla_{(e_{2(i+1)}-e_{2i})}(g)\left(X^e_N(s)\right)\overline{X}_{2i+1}^N(s)X_{2i}^N(s)\diff s,
\end{multline}
where, for $1{\le}k{\le}m{=}2p{+}1$, $e_k$ is the $k$th unit vector of $\R_+^m$, with the convention $e_{2(p+1)}{=}0$, and,  for $a{\in}\R_+^p$, 
\[
\nabla_a(g)(x)=g(x{+}a){-}g(x),\quad x{\in}\R_+^{p}. 
\]
With the same argument as for the martingale $(M_i^N(t))$ above, it is easily seen that $(M_g^{N_k}(t)/N)$ is converging in distribution to the null process. By dividing by $N$ Relation~\eqref{SDEg}, we obtain the relation
\begin{multline*}
\left(\int_0^t\int_{N^p}\nabla_{e_{2}}(f)\left(u\right)\kappa_1\ell_{1}^N(s)\pi_s(\diff u)\diff s\right.\\\left.
+\int_0^t\int_{N^p} \sum_{i=1}^{p}\nabla_{(e_{2(i+1)}-e_{2i})}(f)\left(u\right)\kappa_{2i+1}\ell_{2i+1}(s)u_i\pi_s(\diff u) \diff s\right)=(0),
\end{multline*}
which can be written as
\[
\left(\int_0^t\int_{N^p}R_{\lambda(s),\mu(s)}(f)(u)\pi_s(\diff u) \diff s\right)=(0),
\]
with, for $s{\ge}0$,  $\lambda(s){=}\kappa_1\ell_{1}^N(s)$ and $\mu(s){=}(\mu_i(s))$ and $\mu_i(s){=}\kappa_{2i+1}\ell_{2i+1}(s)$ for $1{\le}i{\le}p$. Step~2 gives that, almost surely for all  $s{\le}t_0$,  the relation $\ell_{2i{+}1}(s){\ge}\eta$ holds. From there, Proposition~\ref{PropMMI} and with standard arguments, as Lemma~1.5 of~\citet{Kurtz1992}, see also Section~B.2.2 of~\citet{LR23}, we obtain that, for $h{\in}{\cal C}_c([0,t_0]{\times}\N^p)$,
\[
\int_{\R_+{\times}\N^p} h(s,u)\mu_\infty(\diff s,\diff u)=\int_0^{+\infty} \int_{\N^p}h(s,u)\prod_{i=1}^p\Pois\left(\frac{\kappa_1\ell_{1}^N(s)}{\kappa_{2i+1}\ell_{2i+1}(s)}\right)(\diff u_i)\diff s,
\]
where $\Pois(\rho)$ is the Poisson distribution with parameter $\rho{>}0$. 

Relation~\eqref{aux2} on the subsequence $(N_k)$ gives, as $k$ go to infinity, the relation
\begin{align*}
\ell_{1}(t)&=  \alpha_{1} {-}\int_0^t  \kappa_{2}\ell_{1}(s)\frac{\kappa_1\ell_{1}(s)}{\kappa_{3}\ell_{3}(s)} \diff s,\\
\ell_{2i+1}(t)&=  \alpha_{2i+1} {+}\int_0^t  \kappa_{2i}\ell_{2i-1}(s)\frac{\kappa_1\ell_{1}(s)}{\kappa_{2i+1}\ell_{2i+1}(s)} \diff s\\
&\ \hspace{3cm}{-}\int_0^t \kappa_{2(i+1)}\ell_{2i+1}(s)\frac{\kappa_1\ell_{1}(s)}{\kappa_{2i+3}\ell_{2i+3}(s)}\diff s,\quad 1{\le}i{\le}p{-}1,\\
\ell_{2p+1}(t)&=  \alpha_{2p+1} {+}\int_0^t  \kappa_{2p}\ell_{2p-1}(s)\frac{\kappa_1\ell_{1}(s)}{\kappa_{2p+1}\ell_{2p+1}(s)} \diff s\\
    &\ \hspace{3cm} {-}\int_0^t \kappa_{2p+2}\ell_{2p+1}(s)\diff s,
\end{align*}
holds almost surely for all $t{\in}[0,t_0]$.

The convergence in distribution of $(\overline{X}^e_N(t))$ to $0$ on $[0,t_0]$ is a direct consequence of b) of Proposition~\ref{PropMMI}, Proposition~\ref{PropCoup} and Step~2.

Due to  local Lipschitz properties, the solution of the differential system~\eqref{ODEImp} can be defined on a maximal  time interval $[0,T_\infty)$, for some $T_\infty{\in}\R_+{\cup}\{{+}\infty\}$.  Assume that $T_\infty{<}{+}\infty$. For any $k{=}2i{+}1{\le}m$, $0{\le}i{\le}p$, by summing-up the $i{+}1$ first ODEs, we obtain that the function $(\ell_1(t){+}\ell_2(t){+}{\cdots}{+}\ell_{k}(t))$ is non-increasing on $[0,T_\infty)$, and, consequently, that $(\ell_k(t))$ has a limit, denoted as $\ell(T_\infty)$, when $t{\nearrow}T_\infty$.  The last ODE of Relations~\eqref{ODEImp}, we have $\dot{\ell}_m(t){\ge}{-}\kappa_{m+1}\ell_m(t)$, since $\alpha_{m}{=}\ell_{m}(0){>}0$, this implies that $\ell_m(T_\infty){>}0$. The same argument applied to the ODE for $(\ell_{2p-1}(t))$ gives that $\ell_{2p-1}(T_0){>}0$. By induction we obtain that the relation $\ell_{2i{+}1}(T_\infty){>}0$ holds for all $0{\le}i{\le}p$. This is a contradiction with that maximal property of $T_\infty$. We conclude that $T_\infty{=}{+}\infty$ and, therefore, that the convergence in distribution we have obtained holds in fact on $\R_+$. The fact that $\ell(\infty){=}(0)$ is proved with analogous arguments. 
The theorem is proved. 
\end{proof}

\section{The Three Species CRN}\label{TN}
In this section, we discuss in a simple setting the process associated to the CRN, starting from large initial states from a scaling perspective. As it will be seen, there are several scaling behaviors depending on the initial state. Proposition~\ref{30N0} shows that a first order limit associated to a class of initial states is a continuous but random function. This is due also to a boundary behavior of the law of mass action. 

\begin{figure}[ht]
  \centerline{
    \begin{tikzpicture}[->,node distance=1.7cm]
                        \node (O) [below] {$\emptyset$};
                        \node (A) [right of=O] {$S_1$};
                        \node (B) [right of=A] {$S_1{+}S_2$};
                        \node (C) [right of=B] {$S_2{+}S_3$};
                        \node (D) [right of=C] {$S_3$};
                        \node (E) [right of=D] {$\emptyset$};
                        \draw[-left to] (O)  -- node[above] {$\kappa_{0}$} (A);
                        \draw[-left to] (A)  -- node[above] {$\kappa_{1}$} (B);
                        \draw[-left to] (B)  -- node[above] {$\kappa_{2}$} (C);
                        \draw[-left to] (C)  -- node[above] {$\kappa_{3}$} (D);
                        \draw[-left to] (D)  -- node[above] {$\kappa_{4}$ } (E);
  \end{tikzpicture}}
\end{figure}

The set of SDEs for this network is
\begin{equation}\label{SDEm3}
\begin{cases}
\diff X_1^N(t)\displaystyle{=}{\cal P}_{0}\left(\left(0,\kappa_{0}\right),\diff t\right){-}{\cal P}_{2}\left(\left(0,\kappa_{2}X_{1}^N(t{-})X_{2}^N(t{-})\right),\diff t\right),\\
\diff X_2^N(t){=}{\cal P}_{1}\left(\left(0,\kappa_{1}X_{1}^N(t{-})\right),\diff t\right){-}{\cal P}_{3}\left(\left(0,\kappa_{3}X_{2}^N(t{-})X_{3}^N(t{-})\right),\diff t\right),\\
\diff X_3^N(t)={\cal P}_2((0,\kappa_2X_1^NX_2^N(t{-})),\diff t){-}{\cal P}_{4}((0,\kappa_{4}X_3^N(t{-})),\diff t),
\end{cases}
\end{equation}

With the convention that $N$, resp. $\emptyset$, stands for a quantity of the order of $N$, resp. $o(N)$, Proposition~\ref{InitH0} shows that the interesting initial states are ${\cal I}_1{=}(N,\emptyset,N)$, ${\cal I}_2{=}(\emptyset,\emptyset,N)$, ${\cal I}_3{=}(\emptyset,N,\emptyset)$, and ${\cal I}_4{=}(N,\emptyset,\emptyset)$. We will denote  ${\cal I}_0{=}(\emptyset,\emptyset,\emptyset)$. 

The subset ${\cal I}_1$ has been taken care of in Theorem~\ref{TheoImp}. For an initial state of  ${\cal I}_2$, with $X_3(0){\sim}x_3N$, on the time interval $[0,T]$,  the first two coordinates remain $o(N)$ and it is not difficult to see that the process $(X_3^N(t)/N)$ converges in distribution to $x_3\exp({-}\kappa_4 t)$. 

We now discuss the remaining cases, for simplicity we focus on two initial states $(0,N,0)$ in ${\cal I}_3$  and $(N,0,0)$ in  ${\cal I}_4$. 
\subsection{Initial State $\mathbf{(0,N,0)}$}\label{0N03}
The next result shows that the limit of $(X_2^N(t)/N)$ is a continuous function, but it is random, driven by an $M/M/\infty$ process. Note that since $(L(t))$ is converging in distribution to a Poisson random variable with parameter $\kappa_0/\kappa_3$, the ergodic theorem gives that $(\ell_2(t))$ is almost surely converging to $0$.
\begin{proposition}\label{30N0}
If $(X_N(t))$ is the solution of~\eqref{SDEm3} with initial condition  $(0, N,0)$, then the convergence in distribution
\[
\lim_{N\to+\infty}\left(\frac{X_1^N(t)}{N},\frac{X_2^N(t)}{N},\frac{X_3^N(t)}{N}\right)=(0,\ell_2(t)),0),
\]
holds, with
\[
\left(\ell_2(t)\right)\steq{def}\left(\exp\left({-}\kappa_3\int_0^t L(s)\diff s\right)\right),
\]
where $(L(t))$ has the same distribution as the  jump process associated to an $M/M/\infty$ queue with input rate $\kappa_0$ and service rate $\kappa_4$.
\end{proposition}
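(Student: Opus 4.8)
The plan is to analyze the system \eqref{SDEm3} with initial state $(0,N,0)$ on a finite time interval $[0,T]$, tracking the orders of magnitude of the three coordinates. The heuristic is that $X_2^N$ stays of order $N$, $X_1^N$ and $X_3^N$ stay of order $O(1)$, and on the fast timescale the pair $(X_1^N,X_3^N)$ relaxes to a quasi-stationary regime which turns out to be that of an $M/M/\infty$ queue, while $X_2^N/N$ evolves slowly. First I would show that $X_2^N$ cannot leave a neighborhood of $N$ quickly: since $X_1^N$ and $X_3^N$ are small, the output rate $\kappa_3 X_2^N X_3^N$ of $X_2^N$ is $O(N)$, so $\overline{X}_2^N(t) = X_2^N(t)/N$ decreases at a rate of order $O(1)$, hence stays close to $1$ on a short time horizon and, after controlling its martingale part (whose bracket is $O(1/N)$, so vanishes), one gets tightness of $(\overline{X}_2^N(t))$. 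The convergence of $(\overline{X}_1^N(t))$ and $(\overline{X}_3^N(t))$ to $(0)$ is immediate once $X_1^N,X_3^N$ are shown to be $O(1)$.

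Next I would introduce the process $(X_1^N(t),X_3^N(t))$ and observe that, as long as $X_2^N(t)\approx N$, it behaves like a time-changed two-dimensional Markov chain: $X_1^N$ increases at rate $\kappa_0$ and decreases at rate $\kappa_2 X_1^N X_2^N \approx \kappa_2 N X_1^N$; a departure from node $1$ is simultaneously an arrival at node $3$; and $X_3^N$ decreases at rate $\kappa_4 X_3^N$. Comparing with Proposition~\ref{PropMMI} (with $p=2$, suitably time-changed) gives the a priori bound $\sup_N \sup_{t>\eps}\E(X_1^N(Nt)+X_3^N(Nt))<\infty$ and hence, via Lemma~1.3 of~\citet{Kurtz1992}, tightness of the stopped occupation measure $\mu_N$ of $(X_1^N(t),X_3^N(t))$ on $\R_+\times\N^2$, with a disintegration $\mu_\infty(\diff s,\diff u)=\pi_s(\diff u)\diff s$ as in \eqref{muI}. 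The local behavior near $t=0$ is handled exactly as in Step~1 of the proof of Theorem~\ref{TheoImp}, controlling $\int_0^\eps \|(X_1^N(s),X_3^N(s))\|\diff s$ with an exponential-moment estimate on the initial relaxation.

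Then comes the identification step. Writing the SDE for a test function $g$ with finite support on $\N^2$ applied to $(X_1^N(t),X_3^N(t))$, the dominant terms (those multiplied by $N$, coming from $\kappa_2 X_1^N X_2^N$ and $\kappa_4 X_3^N$ — note $\kappa_4 X_3^N$ is $O(1)$, not $O(N)$, so actually only the $\kappa_2$ term is accelerated) force $\pi_s$ to be supported, for almost every $s$, on the set where the fast dynamics is balanced. Here the fast part of $X_1^N$ is a pure birth--death chain with birth rate $\kappa_0$ and death rate $\kappa_2 N X_2^N(s)/N \to$ a rate proportional to $N$, so the relevant invariant measure concentrates $X_1^N$ at $0$ in the scaled limit but the time-changed number of transitions matters: what survives is that $X_1^N$ spends negligible time away from $0$ and each excursion immediately feeds $X_3^N$, so $(X_3^N(s))$ sees arrivals at rate $\kappa_0$ and departures at rate $\kappa_4 X_3^N$ — that is, $\pi_s$ is the law of an $M/M/\infty$ queue with parameters $(\kappa_0,\kappa_4)$, independent of $s$, giving the process $(L(t))$. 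Passing to the limit in the integral equation
\[
\overline{X}_2^N(t)=\frac{X_2^N(0)}{N}+M_2^N(t)-\kappa_3\int_0^t \overline{X}_2^N(s)X_3^N(s)\diff s
\]
and using the occupation-measure convergence together with the ergodic theorem for $(L(t))$ yields $\ell_2(t)=1-\kappa_3\int_0^t \ell_2(s)L(s)\diff s$, whose solution is $\exp(-\kappa_3\int_0^t L(s)\diff s)$. Finally, the almost-sure convergence of $(\ell_2(t))$ to $0$ follows since $(L(t))$ is positive recurrent with stationary mean $\kappa_0/\kappa_4>0$, so $\int_0^t L(s)\diff s/t \to \kappa_0/\kappa_4$ a.s.

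The main obstacle, as in Theorem~\ref{TheoImp}, is the rigorous averaging argument: one must show that the fast component $(X_1^N(t),X_3^N(t))$ genuinely equilibrates to the $M/M/\infty$ law and that the limiting ``slow'' equation for $\overline{X}_2^N$ can be closed using only the occupation measure — in particular that the martingale terms vanish and that $(\overline{X}_2^N)$ is tight so that a joint limit $(\overline{X}_2^N,\mu_N)$ exists along a subsequence. A secondary subtlety is that, unlike in Theorem~\ref{TheoImp}, the limiting slow equation is driven by the genuinely random process $(L(t))$ rather than by a deterministic profile, so the averaging measure $\pi_s$ is random (adapted to the filtration generated by $(L(s))$) and one must be careful that the identification of $\pi_s$ as the $M/M/\infty$ occupation law is compatible with this randomness; this is handled by noting that the fast dynamics, conditionally on the slow variable being $\approx N$, does not depend on the slow variable at leading order, so the pair $(L(t),\ell_2(t))$ is characterized as the unique solution of a well-posed martingale problem.
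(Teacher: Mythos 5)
Your setup (tightness of $(\overline{X}_2^N)$ via the vanishing martingale bracket, the stopping time keeping $X_2^N\ge \delta N$, the coupling with the two-urn chain of Proposition~\ref{PropMMI} to keep $X_1^N{+}X_3^N$ stochastically bounded, and the immediate conclusion that $\overline{X}_1^N,\overline{X}_3^N\to 0$) matches the paper's proof. The problem is in your identification step, and it is not a technicality: the occupation-measure / averaging framework you invoke is the wrong tool here and, followed to its conclusion, gives the wrong answer. In Kurtz's scheme, the limit of $\int_0^t \overline{X}_2^N(s)X_3^N(s)\diff s$ is $\int_0^t \ell_2(s)\bigl(\int u\,\pi_s(\diff u)\bigr)\diff s$; if, as you claim, $\pi_s$ is the stationary law of the $M/M/\infty$ queue with parameters $(\kappa_0,\kappa_4)$, i.e. $\Pois(\kappa_0/\kappa_4)$, this produces the \emph{deterministic} limit $\ell_2(t)=\exp(-\kappa_3\kappa_0 t/\kappa_4)$, not the random limit $\exp(-\kappa_3\int_0^t L(s)\diff s)$ asserted in the statement. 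The two do not coincide: for instance $\ell_2(t)=1$ with positive probability (no arrival before $t$). Your appeal to ``the ergodic theorem for $(L(t))$'' in closing the slow equation is precisely the averaging that must \emph{not} happen.

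The reason averaging is wrong is that $(X_3^N(t))$ is not a fast variable. Its transition rates (arrivals fed through node $1$ at rate $\kappa_0$ after an $O(1/N)$ delay, departures at rate $\kappa_4X_3^N$) do not grow with $N$; only the passage through node $1$ is accelerated, at rate $\kappa_2X_1^NX_2^N\ge\kappa_2\delta N$. Consequently $(X_3^N(t))$ converges \emph{as a process} to $(L(t))$ — its randomness survives in the limit — and this is what the paper proves: since each $S_1$ molecule converts to $S_3$ within a time stochastically dominated by an exponential of parameter $\kappa_2\delta N$, the jump times of $(X_3^N)$ converge to those of $(L)$, and one shows directly that $\int_0^{T_2^N\wedge T}\overline{X}_2^N(s)\,|X_3^N(s)-L(s)|\diff s\to 0$, which is the estimate needed to close the equation $x_2(t)=1-\kappa_3\int_0^t x_2(s)L(s)\diff s$. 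Your final paragraph senses the tension (you say $\pi_s$ must be ``random, adapted to the filtration generated by $(L(s))$''), but that is incompatible with identifying $\pi_s$ as the $M/M/\infty$ stationary law, and the proposed fix via a ``well-posed martingale problem for $(L,\ell_2)$'' is not developed and does not explain how the occupation measure would retain the path of $L$. To repair the proof you should drop the averaging of $X_3^N$ entirely and replace it with the pathwise coupling of $(X_3^N)$ with $(L)$; only the node-$1$ dynamics is to be collapsed. (A minor additional slip: the a priori moment bound you need is for $X_1^N(t)+X_3^N(t)$ on the normal timescale $t\in[0,T]$, via the comparison chain $R_{\kappa_0,(\kappa_2,\kappa_4)}$ with $N$-independent rates, not for the process at times $Nt$.)
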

\begin{proof}
We give a sketch of the proof. Some of the arguments used are the same as in the proof of Theorem~\ref{TheoImp}. We fix some $T{>}0$ and $\delta{\in}(0,1)$, and set
  \[
  T_2^N\steq{def}\inf\left\{t{>}0: X_2^N(t)\le \delta N\right\},
  \]
Define $(Y(t)){=}(Y_1(t),Y_2(t))$ the Markov process on $\N^2$ of Proposition~\ref{PropMMI}, whose $Q$-matrix is $R_{\kappa_0,(\kappa_2,\kappa_4)}$. If $N_0$ is sufficiently large so that $\delta N_0{>}1$, then for $N{\ge}N_0$, in a similar way as in the proof of Proposition~\ref{PropCoup},  there exists a coupling of $(X_N(t))$ and $(Y(t))$ such that, for $t{<}T_2^N$, the relation
\[
X_1^N(t){+}X_3^N(t)\le Y_1(t){+}Y_2(t). 
\]
The SDEs~\eqref{SDEm3}  give that, for $t{\ge}0$, the relations
\begin{align}
X_1^N(t)&\leq {\cal P}_0((0,\kappa_0),(0,t]) \label{t1}\\
  \frac{X_2^N(t)}{N}&\le 1 {+}\frac{M_1^N(t)}{N}+\frac{\kappa_1}{N}\int_0^t {\cal P}_0((0,\kappa_0){\times}(0,s))\diff s,\label{t2}\\
  \frac{X_2^N(t)}{N}&\ge 1+\frac{M_2^N(t)}{N}-\kappa_3\int_0^t\frac{X_2^N(s)}{N}X_3^N(s)\diff s,\notag
\end{align}
hold, where $(M_i^N(t)/N)$, $i{=}1$, $2$, is a martingale which is converging in distribution to $0$.
For  any $\eps$ and $\eta{>}0$, the exists $N_0$ such that if $N{\ge}N_0$, then the relation
\begin{multline*}
\frac{X_2^N(t{\wedge}T_2^N)}{N}\ge 1{-}\eta-\kappa_3\int_0^{t{\wedge}T_2^N}\frac{X_2^N(s)}{N}X_3^N(s)\diff s
\\\ge 1{-}\eta-\kappa_3\int_0^{t{\wedge}T_2^N}(1{+}\eta)(Y_1(s){+}Y_2(s))\diff s
\ge 1{-}\eta-\kappa_3\int_0^{t}(1{+}\eta)(Y_1(s){+}Y_2(s))\diff s,
\end{multline*}
holds with probability at least $1{-}\eps$. Let
\[
\tau=\sup\left\{t{>}0:\int_0^{t}(Y_1(s){+}Y_2(s))\diff s\le \frac{1{-}\delta{-}\eta}{\kappa_3(1+\eta)}\right\}
\]
then
\[
\lim_{N\to+\infty} \P\left(\tau{\le}T_2^N\right)=1. 
\]
With the criterion of the modulus of continuity, it is easily seen that the sequence $(X_2^N(t{\wedge}\tau{\wedge}T)/N)$ is tight for the convergence in distribution. We can take  a subsequence associated to $(N_k)$ converging to some continuous process $(x_2(t))$.

With  the same notations as before,
\begin{equation}\label{t3}
\frac{X_2^N(t)}{N}=1+\frac{M_3^N(t)}{N}{+}\kappa_1\int_0^t\frac{X_1^N(s)}{N}\diff s-
\kappa_3\int_0^t \frac{X_2^N(s)}{N}X_3^N(s)\diff s,
\end{equation}
where $(M_3^N(t)/N)$ is a martingale which is converging in distribution to $0$. Relation~\eqref{t1} gives that, for the convergence in distribution, 
\[
\lim_{N\to+\infty} \left(\int_0^t\frac{X_1^N(s)}{N}\diff s\right)=(0). 
\]
We define $(L(t))$ as the solution of the SDE
\[
\diff L(t) = {\cal P}_{0}\left(\left(0,\kappa_{0}\right),\diff t\right){-}{\cal P}_{4}((0,\kappa_{4}L(t{-})),\diff t),
\]
Now using that, on the time  interval $[0,\tau{\wedge}T]$, the process $(X_2^N(t)/N)$ is greater than $\delta$, so that a species $S_1$ is transformed into a species $S_3$ after a duration of time stochastically bounded by an exponential distribution with parameter $\kappa_2\delta N$. Hence, the jumps ${+}1$ of the process $(X_3^N(t))$ are ``almost'' Poisson with rate $\kappa_0$, so that $(X_3^N(t))$ is ``close'' to $(L(t))$. This can be stated rigorously as follows. There are at most ${\cal P}_0((0,\kappa_0){\times}(0,T))$ jumps of size $1$ for  $(X_3^N(t))$, using Relation~\eqref{t2}, one gets that the variable
\[
\int_0^{T_2^N{\wedge}T} \frac{X_2^N(s)}{N}\left|X_3^N(s){-}L(s)\right|\diff s
\]
converges in distribution to $0$ since the duration between jumps of size ${+}1$ or ${-}1$ of both processes is converging to $0$. By taking the limit in Relation~\eqref{t3} along the sequence $(N_k)$ we get the identity
\[
(x_2(t),t\leq \tau{\wedge}T)= \left(\int_0^t x_2(s)L(s)\diff s,t\leq \tau{\wedge}T\right),
\]
which gives the desired convergence of $(X_2^N(t)/N)$ on the time  interval $[0,\tau{\wedge}T]$  and the representation of its limit. The procedure is repeated starting at time $\tau{\wedge}T$. The proposition is proved. 
\end{proof}

\subsection{Initial State $\mathbf{(N,0,0)}$}\label{N003}
When the initial state is $(N,0,0)$, guessing the time evolution is a little more tricky. One can see that $(X_2^N(t))$ grows quickly, at rate $\kappa_1N$ initially, and then $(X_3^N(t))$ grows at rate $\kappa_2 X_1^N(t)X_2^N(t)$. The problem is of understanding the correct orders of magnitude of $(X_2^N(t),X_3^N(t))$ since all these reactions interact. A (vague) intuition suggests that, quite quickly, $(X_3^N(t))$ is of the order of $N$ and that $(X_2^N(t))$ is $o(N)$, so that we are in the case $(N,\emptyset,N)$ already studied.

To show that this intuitive picture is ``correct'',  we can try to use a convenient scaling such as in the following proposition. 
\begin{proposition}
If the initial state is $(N, 0,0)$, then, for the convergence in distribution, 
	\[
		\lim_{N\rightarrow +\infty}\left(\frac{X_1^N(t/N^{2/3})}{N}, \frac{X_2^N(t/N^{2/3})}{N^{1/3}}, \frac{X_3^N(t/N^{2/3})}{N^{2/3}} , t\geq 0\right) =(1, x_2(t), x_3(t), t\geq 0)
	\]
	where $(x_2(t), x_3(t))$ is solution of the ODE
	\[
		\begin{cases}
			\dot{x}_2(t)= \kappa_1 -\kappa_3x_2(t)x_3(t)\\
			\dot{x}_3(t)= \kappa_2 x_2(t). 
		\end{cases}
	\]
\end{proposition}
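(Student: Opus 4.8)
The plan is to establish this functional limit theorem by a combination of a hitting-time / tightness argument at the start (to reach the correct orders of magnitude) and a classical fluid-limit (averaging) argument thereafter. First I would write down the scaled processes
\[
\overline{X}_1^N(t)=\frac{X_1^N(t/N^{2/3})}{N},\quad
\overline{X}_2^N(t)=\frac{X_2^N(t/N^{2/3})}{N^{1/3}},\quad
\overline{X}_3^N(t)=\frac{X_3^N(t/N^{2/3})}{N^{2/3}},
\]
and derive from the SDEs~\eqref{SDEm3} the time-changed integral equations for each of them, isolating the martingale part and the drift. The key scaling bookkeeping is that, with the timescale $t\mapsto t/N^{2/3}$, the reaction $\emptyset\rightharpoonup S_1$ contributes $O(N^{1/3})$ to $X_1^N$, which is $o(N)$, so $\overline{X}_1^N(t)$ stays close to $1$; the reaction $S_1\rightharpoonup S_1{+}S_2$ has rate $\kappa_1 X_1^N\approx\kappa_1 N$, giving an increase of $N^{1/3}$ in $X_2^N$ over times of order $N^{-2/3}$, consistent with the $N^{1/3}$-scaling; the reaction $S_1{+}S_2\rightharpoonup S_2{+}S_3$ has rate $\kappa_2 X_1^N X_2^N\approx\kappa_2 N\cdot N^{1/3}=\kappa_2 N^{4/3}$, giving an increase of $N^{2/3}$ in $X_3^N$ over the time window, consistent with the $N^{2/3}$-scaling; and the reaction $S_2{+}S_3\rightharpoonup S_3$ removes $S_2$ at rate $\kappa_3 X_2^N X_3^N\approx\kappa_3 N^{1/3}\cdot N^{2/3}=\kappa_3 N$, again an increase of order $N^{1/3}$ in the (removed) $X_2^N$ count over the window. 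So all four terms balance at this precise timescale, which is why $N^{2/3}$ is the correct speed-up and $N^{1/3}$, $N^{2/3}$ the correct normalizations.

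Next I would carry out the standard steps: (i) control of the first coordinate, showing $\overline{X}_1^N\to 1$ uniformly on compact sets, either directly from $X_1^N(t)\le X_1^N(0)+{\cal P}_0((0,\kappa_0){\times}(0,t])$ together with a lower bound coming from the boundedness of the reaction rate $\kappa_2 X_1^N X_2^N$, or via a short preliminary hitting-time argument showing $X_1^N$ does not drop below $(1-\eps)N$ before a fixed time; (ii) a priori moment bounds: use the martingale decomposition and Doob's inequality to show $\sup_{t\le T}\E(\overline{X}_2^N(t))$ and $\sup_{t\le T}\E(\overline{X}_3^N(t))$ are bounded (a Gronwall-type estimate, using that $\overline{X}_1^N$ is essentially bounded by $\lambda$, say, after a stopping time if needed), which in turn gives that the quadratic variations of the martingale terms are $O(1/N^{1/3})$ and hence the martingales vanish in distribution; (iii) tightness of $(\overline{X}_2^N,\overline{X}_3^N)$ via the modulus-of-continuity criterion, Theorem~7.3 of~\citet{Billingsley}, applied to the integral equations; (iv) identification of any subsequential limit $(x_2(t),x_3(t))$ as a solution of the stated ODE system by passing to the limit in the integral equations, using $\overline{X}_1^N\to 1$, the vanishing of the martingales, and continuity of the drift along the path; (v) uniqueness of the solution of the ODE (the vector field is locally Lipschitz, and global existence follows since $x_2,x_3$ remain nonnegative and $x_2{+}$something is controlled), which upgrades subsequential convergence to full convergence.

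The main obstacle I expect is item (ii) combined with the boundary behavior near $t=0$: at time $0$ the state is exactly $(N,0,0)$, so $X_2^N$ and $X_3^N$ start from $0$ and must be ``pumped up'' to their equilibrium orders of magnitude during an initial layer. One has to show that this initial transient happens on a timescale that is $o(N^{-2/3})$ in the original clock (i.e. instantaneously in the limit), and that during it no coordinate overshoots its nominal order — in particular that $X_2^N$ does not transiently reach order $N$, which would break the rate estimate $\kappa_2 X_1^N X_2^N = O(N^{4/3})$ and the coupling bounds. The clean way to handle this is to introduce a stopping time $\tau_N=\inf\{t: \overline{X}_2^N(t)\ge A \text{ or } \overline{X}_3^N(t)\ge A\}$ for a large constant $A$, prove all estimates and the convergence on $[0,\tau_N\wedge T]$ using coupling with a dominating $M/M/\infty$-type network as in Proposition~\ref{PropCoup} and Proposition~\ref{PropMMI} to get the a priori bounds, then note that the limiting ODE solution stays in a compact set not touching level $A$ (for $A$ large), so that $\P(\tau_N\le T)\to 0$ and the convergence holds on all of $[0,T]$. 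A secondary technical point is that near $t=0$ one cannot get uniform control of increments in the usual way; as in Step~1 of the proof of Theorem~\ref{TheoImp}, I would first prove the statement on $[\eps,T]$ and then let $\eps\to 0$ using a crude bound on $\int_0^\eps$ of the relevant rates, noting that the process started from $(0,\cdot,\cdot)$-type layer contributes negligibly.
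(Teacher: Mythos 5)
Your proposal is correct and is precisely the standard fluid-limit argument the paper alludes to when it writes ``we skip the proof of this result since the arguments are standard'': the scaling bookkeeping, the vanishing of the martingales (quadratic variations $O(N^{-1/3})$), tightness via the modulus of continuity, and the identification of the limit ODE all check out. One minor remark: there is no genuine initial layer to handle, since the limit ODE itself starts from $(x_2(0),x_3(0))=(0,0)$, matching the scaled initial state exactly, and a direct Poisson domination (on $[0,T/N^{2/3}]$ one has $X_1^N\le N{+}{\cal P}_0((0,\kappa_0){\times}(0,T/N^{2/3}])$, hence $\sup X_2^N=O_P(N^{1/3})$ and then $\sup X_3^N=O_P(N^{2/3})$) already yields the a priori bounds without any stopping time or coupling with an $M/M/\infty$ network --- your localization is valid but more elaborate than needed.
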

We skip the proof of this result since the arguments are standard.  The solution of the above ODE is such that, as $t$ goes to infinity,  $x_2(t)$, resp.  $x_3(t)$ converges to $0$, resp. ${+}\infty$, which confirm our intuition. Note that the first coordinate of the process does not change at all. This is nevertheless not enough to obtain rigorously the correct orders of magnitude, for the $(X_3^N(t))$ in particular.   This shows that a scaling analysis, in the sense of deriving the convergence in distribution of scaled sample paths, is not always the best approach. 

Instead, with a combination of coupling arguments and convergence results, in a formulation similar as the one used in Proposition~\ref{InitH0}, we  can prove that at some instant, deterministic here, the coordinates of the process have the right order of magnitude. 

The following proposition shows that if the initial state is $(N,0,0)$,  then, at any time $t_0{>}0$, the state of the CRN satisfies the conditions of the initial state of Theorem~\ref{TheoImp}. It is in fact immediately in ${\cal I}_1$. 
\begin{proposition}
If $(X_N(t))$ is the solution of~\eqref{SDEm3} with initial condition $(N,0,0)$, then for any $\eps{>}0$ and $t_0{>}0$,  there exists $K{>}0$ and $\eta{>}0$ such that
  \[
		\liminf_{N\rightarrow +\infty}\P\left(\frac{X_1^N(t_0)}{N}, \frac{X_3^N(t_0)}{N}{\in}(\eta,1],X_2^N(t_0){\le}K \right)\ge 1{-}\eps.
  \]
\end{proposition}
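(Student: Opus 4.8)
The plan is to reduce the statement to an application of the averaging/coupling machinery already developed in the paper, checking separately that each of the three coordinates lands in the asserted range at a fixed time $t_0$. The first coordinate is the easiest: by the SDEs~\eqref{SDEm3} with initial state $(N,0,0)$, $(X_1^N(t))$ is decreased only by ${\cal P}_2$, at rate $\kappa_2 X_1^N X_2^N$, and the total mass $X_1^N(t){+}X_3^N(t)$ is conserved up to the $\kappa_0$-input (which is $O(1)$ and negligible on $[0,t_0]$). So $\overline X_1^N(t){=}X_1^N(t)/N\le 1$ always, and I need a matching lower bound $\overline X_1^N(t_0)\ge \eta$ with high probability. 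For this I would bound the number of $\kappa_2$-firings up to $t_0$: since $X_1^N\le N$ and, as in the scaling proposition above, $X_2^N$ stays of order $N^{1/3}$ (or at worst is controlled via a crude coupling bound $X_2^N(t)\le {\cal P}_1((0,\kappa_0){+}\text{something}\cdot N)$), the integrated rate $\kappa_2\int_0^{t_0}X_1^N X_2^N\,\diff s$ is $o(N)$ on the relevant timescale, hence only $o(N)$ molecules leave node~$1$ by time $t_0$. This also gives $X_3^N(t_0)\le N$ and, by conservation, $X_1^N(t_0)\ge N - o(N)\ge \eta N$.

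Next I would handle $X_3^N(t_0)\ge \eta N$, which is where the real content lies. The idea is that $X_1^N$ stays pinned near $N$, so node~$1{\to}2{\to}3$ behaves like the CRN of Theorem~\ref{TheoImp} (case ${\cal I}_1$) or, more directly, like a tandem system: molecules enter node~$2$ at rate $\approx\kappa_1 N$ and leave it (into node~$3$) at rate $\kappa_3 X_2^N X_3^N$. To get a lower bound on $X_3^N$ I would use a comparison: as long as $X_3^N(t)\le \eta N$, the output rate of node~$3$ is $\kappa_4 X_3^N \le \kappa_4 \eta N$, while node~$2$ keeps receiving mass, and $X_2^N$ cannot drain faster than the throughput permits. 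Concretely, writing $\overline X_3^N(t){=}X_3^N(t)/N$, Relations~\eqref{SDEm3} give
\[
\overline X_3^N(t) = \frac{1}{N}{\cal P}_2\big((0,\kappa_2 X_1^N X_2^N)\times(0,t]\big) - \kappa_4\int_0^t \overline X_3^N(s)\,\diff s,
\]
and the ${\cal P}_2$ term equals (up to a vanishing martingale) $\kappa_2\int_0^t \overline X_1^N(s) X_2^N(s)\,\diff s$, which by mass conservation at node~$2$ is essentially the total input $\kappa_1\int_0^t\overline X_1^N(s)\,\diff s\cdot N^{?}$ minus whatever sits in $X_2^N$ — but $X_2^N$ is $o(N)$, so the amount transferred into node~$3$ up to time $t$ is $\approx \kappa_1 t\, N$ for small $t$. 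Picking $\eta$ small enough (depending on $\kappa_1,\kappa_4,t_0$) forces $\overline X_3^N(t_0)\ge\eta$ with high probability. Rigorously this is a stochastic-averaging argument for $X_2^N$: one shows tightness of the occupation measure of $(X_2^N(s))$ on the fast timescale, identifies the limit via Proposition~\ref{PropMMI} (an $M/M/\infty$-type queue with input $\kappa_1 X_1^N$), and deduces the law-of-large-numbers for $\int_0^{t} X_2^N X_3^N\,\diff s$, exactly as in Steps~1--3 of the proof of Theorem~\ref{TheoImp}.

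Finally, the bound $X_2^N(t_0)\le K$: once $X_3^N$ has grown to order $N$ (which by the previous step happens before $t_0$, say by time $t_0/2$ with high probability), the drain rate $\kappa_3 X_2^N X_3^N$ of node~$2$ is of order $\kappa_3 \eta N\cdot X_2^N$, i.e. node~$2$ behaves like an $M/M/\infty$ queue with input $\kappa_1 X_1^N\le \kappa_1 N$ and per-customer service rate $\kappa_3 X_3^N \gtrsim \kappa_3\eta N$, whose equilibrium occupancy is $O(1)$; so on the interval $[t_0/2,t_0]$ the process $X_2^N$ relaxes from $o(N)$ down to $O(1)$. This is again a coupling with the Markov process of Proposition~\ref{PropMMI}, using the lower bound $X_3^N(t)\ge\eta N$ valid on that interval, together with the fact that an $M/M/\infty$ queue started from $o(N)$ reaches an $O(1)$ neighborhood of its (bounded) stationary value in an arbitrarily short time after scaling. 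Combining the three high-probability events and taking $N\to\infty$ yields the claimed $\liminf$. The main obstacle is the middle step: establishing that $X_3^N$ genuinely reaches order $N$ by time $t_0$, because this requires controlling the as-yet-unknown order of magnitude of $X_2^N$ simultaneously — precisely the point the authors flag before the proposition. I would resolve it by the self-consistent bootstrap above (crude upper bound on $X_2^N$ $\Rightarrow$ lower bound on throughput into node~$3$ $\Rightarrow$ lower bound on $X_3^N$ $\Rightarrow$ upper bound on $X_2^N$), making each implication precise with the occupation-measure/averaging lemmas already in hand.
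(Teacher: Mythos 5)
Your overall architecture (comparisons with $M/M/\infty$-type processes plus averaging, treating the three coordinates separately) is the right one, and your third step --- coupling $X_2^N$ with an $M/M/\infty$ queue whose per-customer service rate is at least $\kappa_3\eta N$ once $X_3^N\ge \eta N$ --- is essentially what the paper does. But there are two genuine errors upstream. First, your treatment of $X_1^N$ is self-contradictory: in the SDEs~\eqref{SDEm3} every downward jump of $X_1^N$ is, via the very same Poisson process ${\cal P}_2$, an upward jump of $X_3^N$. If, as you claim, only $o(N)$ molecules leave node~$1$ by time $t_0$, then $X_3^N(t_0)=o(N)$ and the middle assertion of the proposition fails. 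In fact $\Theta(N)$ molecules are transferred (the quasi-stationary flux is $\kappa_2X_1^NX_2^N\approx \kappa_1\kappa_2 (X_1^N)^2/(\kappa_3X_3^N)=\Theta(N)$ per unit time), so $X_1^N(t_0)/N$ converges to a limit strictly below $1$; the lower bound ${\ge}\eta$ must come from the exponential-decay form of that limit, not from a ``nothing leaves'' argument. The paper obtains it by choosing $\delta$ with $\delta<\exp(-\kappa_1\kappa_2\delta t_0/\kappa_3)$ so that $T_1^N=\inf\{t:X_1^N(t)\le\delta N\}$ exceeds $t_0$ with high probability.

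Second, your key middle step rests on a misidentification of fluxes, and the bootstrap runs in the wrong direction. ``Mass conservation at node~$2$'' relates the input $\kappa_1X_1^N$ to the output $\kappa_3X_2^NX_3^N$ of node~$2$; it says nothing about $\kappa_2X_1^NX_2^N$, which is the reaction feeding node~$3$ and which does not change $X_2^N$ at all. Consequently the amount transferred into node~$3$ is not ${\approx}\kappa_1 tN$, and, more importantly, an upper bound on $X_2^N$ cannot yield a lower bound on the throughput $\kappa_2X_1^NX_2^N$ into node~$3$: that quantity is increasing in $X_2^N$, so a \emph{lower} bound on $X_2^N$ is what is needed. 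Invoking the averaging of Theorem~\ref{TheoImp} is also circular here, since that argument needs the odd coordinates to stay above $\eta N$ (through $T_N$ and Proposition~\ref{PropCoup}) to control the occupation measure of $X_2^N$, and $X_3^N(0)=0$. The paper breaks the circle with a minorizing system $(Y_1^N,Y_2^N,Y_3^N)$: $Y_2^N$ is given the smallest admissible input $\kappa_1\delta N$ (valid before $T_1^N$) and the largest admissible drain $\kappa_3NY_2^N$ (valid because $X_3^N\le N$ a priori), hence $Y_2^N\le X_2^N$, hence a lower bound on the flux into node~$3$, and the averaging limit of $(Y_3^N/N)$ then delivers $X_3^N(t_0)\ge\eta N$. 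Your step~2 needs to be replaced by an argument of this kind.
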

\begin{proof}
  Since we are interested in the order of magnitude of $(X_1^N(t))$ in $N$ on finite time intervals, as before,  without loss of generality, we can assume that $\kappa_0{=}0$. In the rest of the proof, the first term of the right-hand side of the first relation of the SDEs~\eqref{SDEm3} is removed.  It is then easily seen that, in this case,  the process $(X_1^N(t){+}X_3^N(t))$ is non-increasing, in particular $X_3^N(t){\le}N$ for all $t{\ge}0$. 

For  $\delta{\in}(0,1)$, we  introduce the stochastic process $(Y_1^N(t),Y_2^N(t),Y_3^N(t))$, as the solution of the SDE
\begin{equation}
\begin{cases}
\diff Y_1^N(t)&= {-}{\cal P}_{2}((0,\kappa_{2}Y_1^NY_2^N(t{-})),\diff t)  \\
\diff Y_2^N(t)&={\cal P}_1((0,\kappa_1\delta N),\diff t) {-}{\cal P}_{3}((0,\kappa_{3}N Y_2^N(t{-})),\diff t)  \\
\diff Y_3^N(t)&={\cal P}_2((0,\kappa_2\delta NY_2^N(t{-})),\diff t){-}{\cal P}_{4}((0,\kappa_{4}Y_3^N(t{-})),\diff t),
\end{cases}
\end{equation}
with $Y_1^N(0){=}N$, $Y_2^N(0){=}Y_3^N(0){=}0$.

Define
\[
T_1^N=\inf\{t{\ge}0: X_1^N(t)\le \delta N\}.
\]
By induction on the jumps of the processes $(X_N(t),Y_N(t))$  on the time interval, it is not difficult to prove that, for any $t{<}T_1^N$, the relations $X_1^N(t){\ge}Y_1^N(t)$, $X_2^N(t){\ge}Y_2^N(t)$ and $X_3^N(t){\ge}Y_3^N(t)$ hold.

The process $(Y_2^N(t))$ has the same distribution as $(L(Nt))$ where $(L(t))$  is the process of an $M/M/\infty$ queue with arrival rate $\kappa_1\delta$ and service rate $\kappa_3$. Hence, with the ergodic theorem for positive recurrent Markov processes, we get that, for the convergence in distribution,
\[
\lim_{N\to+\infty}\left(\int_0^tf\left(Y_2^N(s)\right)\diff s\right)=\left(\croc{\Pois\left(\frac{\kappa_1\delta}{\kappa_3}\right),f}t\right),
\]
for any function $f$ on $\N$ with finite support. 

By using the classical approach to the proof of a stochastic averaging principle as presented in~\citet{Kurtz1992}, see also Section~B.2.2 of~\cite{LR23} for an example, we obtain the following convergence in distribution 
\[
\lim_{N\to+\infty} \left(\frac{Y_1^N(t)}{N},\frac{Y_3^N(t)}{N}\right)=\left(e^{-\kappa_1\kappa_2\delta t/\kappa_3},\frac{\kappa_1\kappa_2\delta^2}{\kappa_3\kappa_4}\left(1{-}e^{-\kappa_4 t}\right)\right). 
\]
For $t_0{>}0$, one can choose $\delta{\in}(0,1)$ sufficiently small so that $\delta{<}\exp({-}\kappa_1\kappa_2\delta t_0/\kappa_3)$, the above convergence shows that
\begin{equation}\label{qw1}
\lim_{N\to+\infty}\P\left(T_1^N{\ge}t_0\right)=1. 
\end{equation}
This concludes the proof of the lower bounds of $X_1^N(t_0)/N$ and $X_3^N(t_0)/N$. Remains to show the upper bound of $X_2^N(t_0)$. 

Define $\delta'{\in}(0,1)$ sufficiently small so that $\delta'{<} \kappa_1\kappa_2\delta^2\left(1{-}e^{-\kappa_4 t_0}\right)/(\kappa_3\kappa_4)$, and 
\[
T_3^N=\inf\{t{\ge}0: X_3^N(t)\le \delta' N\}.
\]
In a similarly way, as for Relation~\eqref{qw1}, we have 
\begin{equation}\label{qw2}
\lim_{N\to+\infty}\P\left(T_3^N{\ge}t_0\right)=1. 
\end{equation}
We introduce $(Z_2^N(t))$ the solution of the SDE,
\[
\diff Z_2^N(t)={\cal P}_1((0,\kappa_1 N),\diff t) {-}{\cal P}_{3}((0,\kappa_{3}\delta' N Z_2^N(t{-})),\diff t),
\]
with initial condition $Z_2^N(0){=}0$. As before, the process $(Z_2^N(t))$ has the same distribution as $(L^+(Nt))$ where $(L^+(t))$ is the process of an $M/M/\infty$ queue with arrival rate $\kappa_1$ and service rate $\kappa_3\delta'$.
For any $t_0{>}0$, $Z_2^N(t_0)$ converges in distribution to a Poisson distribution $\Pois(\rho)$ with parameter $\rho{\steq{def}}\kappa_1/(\delta'\kappa_3)$.

Since $X_1^N(t){\le}N$ for all $t{\ge}0$ and $X_3^N(t){\ge}\delta' N$ for $t{\le}T_3^N$, it is easily seen that $X_2^N(t){\le}Z_2^N(t)$ for $t{\le}T_3^N$. We conclude the proof of the proposition with Relation~\eqref{qw2}. 
\end{proof}

\section{AIMD processes: Invariant Distributions and a Limit Theorem}\label{AIMDSec}
We introduce two classes of AIMD stochastic processes (Additive Increase Multiplicative Decrease) in Sections~\ref{V1sec} and~\ref{V0sec} which play an important role  in the limit results of  Sections~\ref{FNI} and~\ref{FNII} for the CRN with four nodes. Section~\ref{PartSec} gives an averaging result where fast processes are AIMD processes, it will be used to establish the averaging principle of Section~\ref{FNII}. 

The first of these AIMD processes, $(R_1(t))$, describes the asymptotic behavior of the fourth coordinate $(X_4^N(t))$  on the timescale $(\sqrt{N}t)$ when  $(X_3^N(t))$ is $1$. The other one, $(R_0(t))$, is associated to the asymptotic behavior of $(X_4^N(t))$ on the timescale $(\sqrt{N}t)$.  The asymptotic time evolution  of $(X_2^N(\sqrt{N}t)/N,X_4^N(\sqrt{N}t)/\sqrt{N})$ can be expressed in terms of these AIMD processes.

In the averaging principle proved in Section~\ref{FNII}, the fast processes involved are AIMD processes. For this reason,  the asymptotic dynamic of the system is expressed in terms of  functionals of their invariant distributions. Sections~\ref{V1sec} and ~\ref{V1sec} give an explicit expression of the invariant distributions of $(R_1(t))$ and $(R_0(t))$.

Section~\ref{PartSec} establishes an asymptotic result for the time evolution of a particle system. This is a key ingredient in the proofs of limit theorems of Section~\ref{FNII}.

\begin{definition}\label{GamDef}
For $a$ and $b{>}0$,  $\Gamma_0(a,b)$ is the distribution on $\R_+$ with density
\[
\frac{b}{\Gamma(a)}(b x)^{a{-}1}e^{-b x}, \quad x{\ge}0.
\]

\end{definition}
The function $\Gamma$ is the classical Gamma function. See~\citet{Whittaker}.
The Laplace transform of  $\Gamma_0(a,b)$ at $\xi{\ge}0$ is  given  by
\[
\left(\frac{b}{b{+}\xi}\right)^a. 
\]
The  fractional moment of order $1/2$ of this distribution  is
\begin{equation}\label{MomGam}
\int_0^{+\infty}\sqrt{x}\,  \Gamma_0(a,b)(\diff x)=\frac{1}{\sqrt{b}\Gamma(a)}\int_0^{+\infty}x^{a{-}1/2}e^{-x}\diff x
=\frac{1}{\sqrt{b}}\frac{\Gamma(a{+}1/2)}{\Gamma(a)}.
\end{equation}

\subsection{The process $(R_1(t))$}\label{V1sec}
For   $\alpha$ and $\beta{>}0$, let ${\cal N}_1$ be a Poisson point process on $\R^2{\times}[0,1]$ with intensity measure $\alpha\diff s{\times}\diff t{\times} \beta a^{\beta-1}\diff a$. The point process  ${\cal N}_1$ can be represented as  ${\cal N}_1{=}(u_n,v_n,U_n^{1/\beta})$ where $(u_n,v_n)$ is a homogeneous Poisson point process with rate $\alpha$, independent of the i.i.d. sequence $(U_n)$ of uniformly distributed random variables on $[0,1]$. 

We now define $(R_1(t))$ as the solution of the SDE
\begin{equation}\label{AIMDeq1}
  \diff R_1(t) = \diff t +\int_{a{\in}[0, 1]} (a{-}1)R_1(t{-}){\cal N}_1((0,R_1(t{-})],\diff t, \diff a),
\end{equation}
with $R_1(0){=}v{\ge}0$.

The asymptotic behavior of $(R_1(t))$ is described in the following proposition. 
\begin{proposition}
  The process  $(R_1(t))$ converges in distribution to  the random variable $R_1(\infty)$. The distribution  of $R_1(\infty)^2$ is $\Gamma_0((\beta{+}1)/2,\alpha/2)$ of Definition~\ref{GamDef}.
\end{proposition}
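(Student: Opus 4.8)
The plan is to pass to the Markov chain embedded at the instants just after the downward (multiplicative) jumps of $(R_1(t))$, where the dynamics reduces to a transparent affine random recursion, and then to transfer the result back to the continuous-time process. Throughout, write $\mathcal A$ for the infinitesimal generator of $(R_1(t))$, acting on $f{\in}{\cal C}_c^1(\R_+)$ by $\mathcal A(f)(v){=}f'(v){+}\alpha v\int_0^1(f(u^{1/\beta}v){-}f(v))\diff u$, obtained from~\eqref{AIMDeq1} via the change of variable $u{=}a^\beta$. Between two successive jumps $(R_1(t))$ grows deterministically at unit speed, and a jump at time $t$ occurs at rate $\alpha R_1(t-)$ and multiplies the state by $a{=}U^{1/\beta}$ with $U$ uniform on $[0,1]$. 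Denote by $\tau_n$ the $n$-th jump time, $R_n^+{=}R_1(\tau_n)$ the post-jump value, and $R_{n+1}^-{=}R_n^+{+}(\tau_{n+1}{-}\tau_n)$ the next pre-jump value. Conditionally on $R_n^+{=}r$, the next inter-jump duration $\sigma$ has tail $\P(\sigma{>}s){=}\exp(-\alpha rs{-}\alpha s^2/2){=}\exp(-\tfrac{\alpha}{2}((r{+}s)^2{-}r^2))$; hence $W_{n+1}{:=}(R_{n+1}^-)^2{-}(R_n^+)^2$ is exponentially distributed with parameter $\alpha/2$ and independent of $R_n^+$ and of the past. Since $R_{n+1}^+{=}U_{n+1}^{1/\beta}R_{n+1}^-$, the squared post-jump values $S_n{:=}(R_n^+)^2$ obey
\[
S_{n+1}=U_{n+1}^{2/\beta}\bigl(S_n+W_{n+1}\bigr),
\]
with $(U_n)$ i.i.d. uniform on $[0,1]$, $(W_n)$ i.i.d. exponential of parameter $\alpha/2$, the two families independent.

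This is an affine recursion $S_{n+1}{=}A_{n+1}S_n{+}B_{n+1}$ with $A{=}U^{2/\beta}{\in}(0,1)$, $\E[\log A]{=}-2/\beta{<}0$, and $\E[(\log B)^+]{<}\infty$; by the classical theory of such recursions it converges in distribution, from any initial state, to the unique law $S_\infty$ solving the stochastic fixed-point equation $S_\infty{\stackrel{d}{=}}U^{2/\beta}(S_\infty{+}W)$ with $U$, $W$, $S_\infty$ independent. Using that $U^{2/\beta}{\sim}\mathrm{Beta}(\beta/2,1)$, $W{\sim}\Gamma_0(1,\alpha/2)$, the stability of $\Gamma_0(\cdot,\alpha/2)$ under convolution, and the Beta--Gamma algebra, one verifies that $\Gamma_0(\beta/2,\alpha/2)$ solves this equation, hence $S_\infty{\sim}\Gamma_0(\beta/2,\alpha/2)$.

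It remains to return to continuous time. Almost surely the number of jumps on $[0,t]$ tends to $+\infty$, and on each inter-jump interval the motion is the deterministic unit-speed flow; combining the convergence of the embedded chain with a tightness and uniform-integrability estimate — equivalently, running a Foster--Lyapunov argument with $V(v){=}v^2$, for which $\mathcal A(V)(v){=}2v{-}\tfrac{2\alpha}{\beta+2}v^3$ drives the process toward a compact set — one obtains that $(R_1(t))$ converges in distribution to some $R_1(\infty)$. Its law follows from a renewal--reward computation: the occupation density at level $v$ accumulated over a stationary cycle starting at $R^+$ equals $\E[\ind{R^+\le v}\,e^{-\frac{\alpha}{2}(v^2-(R^+)^2)}]$, which, with $R^+{=}\sqrt{S_\infty}$ and $S_\infty{\sim}\Gamma_0(\beta/2,\alpha/2)$, is proportional to $v^{\beta}e^{-\alpha v^2/2}$. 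Therefore $R_1(\infty)$ has density proportional to $v^{\beta}e^{-\alpha v^2/2}$ on $\R_+$, i.e. $R_1(\infty)^2{\sim}\Gamma_0((\beta{+}1)/2,\alpha/2)$. As an independent check (and an alternative route to the identification), one verifies directly that $\pi(\diff v){\propto}v^{\beta}e^{-\alpha v^2/2}\diff v$ satisfies $\int_0^\infty\mathcal A(f)\diff\pi{=}0$ for $f{\in}{\cal C}_c^1(\R_+)$: the change of variable $w{=}u^{1/\beta}v$ together with an integration by parts reduces this to the first-order linear identity $p'(w){+}\alpha w p(w){=}\alpha\beta w^{\beta-1}\int_w^{\infty}v^{1-\beta}p(v)\diff v$, satisfied by $p(w){=}Cw^{\beta}e^{-\alpha w^2/2}$.

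The recursion analysis and the Beta--Gamma identification are routine. The main work will be in the last step: making rigorous the passage from the embedded chain to the time-stationary behaviour of $(R_1(t))$ — either the tightness and uniform-integrability control needed to pass convergence in distribution from the jump chain to the continuous-time process and then apply the renewal inversion, or the irreducibility/ergodicity argument underpinning uniqueness of the invariant measure in the direct verification.
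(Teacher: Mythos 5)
Your proposal is correct and follows essentially the same route as the paper: the embedded chain at jump times, the observation that the squared pre-jump increment is exponential with parameter $\alpha/2$ leading to the affine recursion $S_{n+1}{=}U^{2/\beta}(S_n{+}W_{n+1})$ with fixed point $\Gamma_0(\beta/2,\alpha/2)$, and then the cycle (renewal--reward) formula producing the density proportional to $v^{\beta}e^{-\alpha v^2/2}$. The only differences are cosmetic --- you identify the perpetuity law via the Beta--Gamma algebra where the paper computes the Laplace transform of the explicit series $W_1{=}\sum 2E_{\alpha,i}\prod U_k^{2/\beta}$ as a Poisson functional, and you add a direct generator check --- so no further comparison is needed.
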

The proof of the proposition uses  the embedded Markov chain of $(R_1(t))$.  
\begin{proof}
Let $(R_n)$ be the embedded Markov chain of $(R_1(t))$,  if $R_0{=}v{>}0$, then
\[
R_1= U^{1/\beta}\left(v{+} \tau_v\right),
\]
where $U$ and $\tau_v$ are independent random variables, $U$ with a uniform distribution on $[0,1]$ and $\tau_v$ is such that
\begin{equation}\label{tauv}
\int_0^{\tau_v} (v{+} s)\diff s =E_\alpha,
\end{equation}
where $E_\alpha$ is an exponentially distributed random variable with parameter $\alpha$, i.e. for $x{\ge}0$,
\[
\P(\tau_v\ge x)=\exp\left({-}\alpha\int_0^{x} (v{+} s)\diff s\right).
\]
We obtain that
\[
R_1^2= U^{2/\beta}\left(v^2{+}2E_\alpha\right).
\]
Now if $(U_i)$ and $(E_{\alpha,i})$ are independent i.i.d. sequences with the respective distribution of $U$ and $E_\alpha$, 
\begin{multline*}
W_1\steq{def} \sum_1^{+\infty} 2E_{\alpha,i}\prod_1^i U_k^{2/\beta}\\=
U_1^{2/\beta} \left( \sum_{i=2}^{+\infty} 2E_{\alpha,i}\prod_{k=2}^i U_k^{2/\beta}{+} 2E_{\alpha,1}\right)
=U_1^{2/\beta} \left(W_2{+} 2E_{\alpha,1}\right),
\end{multline*}
and $W_2{\steq{dist}}W_1$. The distribution of $\sqrt{W_1}$ is therefore invariant for the Markov chain $(R_n)$. The variable $W_1$ can be expressed as 
\[
W_1= \sum_1^{+\infty} 2E_{\alpha,i}\exp(-2 t_i/\beta),
\]
where, for $i{\ge}1$, $t_i{=}{-}\ln(U_1){-}\ln(U_2){-}{\cdots}{-}\ln(U_i)$.

From Proposition~1.11 of~\citet{Robert}, the marked point process ${\cal M}{=}(t_n,E_{\alpha,n})$ is Poisson with intensity $\diff u{\otimes}\alpha\exp(-\alpha  v)\diff v$ on $\R_+^2$ and, since 
\[
W_1= \int_{\R_+^2}2v\exp(-2 u/\beta){\cal M}(\diff u,\diff v),
\]
with Proposition~1.5 of~\cite{Robert} for the Laplace transform of ${\cal M}$, we get that, for $\xi{\ge}0$,
\begin{multline*}
\E\left(e^{-\xi W_1}\right)=
\exp\left({-}\int_{\R_+^2}\left(1{-}e^{-2\xi v\exp(-2 u/\beta)}\right)\alpha \exp(-\alpha  v)\diff v \diff u\right)\\
=\exp\left({-}\int_{0}^{+\infty}\frac{2\xi e^{-2 u/\beta}}{\alpha{+}2\xi e^{-2 u/\beta}}\diff u \right)
=\left(\frac{\alpha}{\alpha{+}2\xi}\right)^{\beta/2}.
\end{multline*}
The distribution of $W_1$ is $\Gamma_0(\beta/2,\alpha/2)$, hence with density
\[
\left(\frac{\alpha}{2}\right)^{\beta/2}\frac{x^{\beta/2{-}1}}{\Gamma(\beta/2)}e^{-\alpha x/2},\quad x{\ge}0.
\]
If $\nu_1$ denotes the invariant distribution of $(R_1(t))$, its representation in terms of the invariant distribution of the embedded Markov chain gives the relation
\begin{equation}\label{Invnu1}
\int_{\R_+}f(x)\nu_1(\diff x)=\frac{1}{\E(\tau_{\sqrt{W_1}})}\E\left(\int_0^{\tau_{\sqrt{W_1}}} f\left(\sqrt{W_1}{+} s\right)\diff s \right),
\end{equation}
for any non-negative Borelian function $f$ on $\R_+$

We take $f(x){=}\exp({-}\xi x)$ for some $\xi{\ge}0$,  Relation~\eqref{tauv} with $v{=}\sqrt{W_1}$ gives the relation
\begin{multline*}
  \E(\tau_{\sqrt{W_1}})\int_{\R_+}e^{-\xi x}\nu_1(\diff x)=\frac{1}{\xi }\left(\E\left(\left(e^{-\xi \sqrt{W_1}}\right)\right)-\E\left(e^{-\xi (\sqrt{W_1}{+}\tau_{\sqrt{W_1}})}\right)\right)\\
  =\frac{1}{\xi }\left(\E\left(e^{-\xi \sqrt{W_1}}\right)-\E\left(e^{-\xi (\sqrt{W_1{+}2E_\alpha})}\right)\right),
\end{multline*}
where $E_\alpha$ is an exponentially distributed random variable with parameter $\alpha$, independent of $W_1$.  
By using Fubini's formula, we have
\begin{multline}\label{Inv2}
\E\left(\tau_{\sqrt{W_1}}\right) \int_{\R_+}e^{-\xi x}\nu_1(\diff x)=\frac{1}{\xi }\left(\E\left(e^{-\xi \sqrt{W_1}}\right)-\E\left(e^{-\xi (\sqrt{W_1{+}2E_\alpha})}\right)\right)\\
=\int_0^{+\infty}e^{-\xi u}\P\left(\sqrt{W_1}{\le}u{\le}\sqrt{W_1{+}2E_\alpha}\right)\diff u,
\end{multline}
and, therefore, the density of $\nu_1$ at $u$ is proportional to 
\begin{multline*}
\P\left(\sqrt{W_1}{\le}u{\le}\sqrt{W_1{+}2E_\alpha}\right)=e^{-\alpha u^2/2}\E\left(e^{\alpha W_1/2}\ind{W_1\le u^2}\right)
\\=C_0e^{-\alpha u^2/2}\int_0^{u^2} e^{\alpha w/2} x^{\beta/2-1}e^{-\alpha w/2}\diff w
=C_1 e^{-\alpha u^2/2}u^{\beta},
\end{multline*}
where $C_0$ and $C_1$ are multiplicative constants. 
The proposition is proved. 
\end{proof}
\subsection{The process $(R_0(t))$}\label{V0sec}
For $\alpha{>}0$, let ${\cal N}_0$ be a Poisson point process on $\R{\times}\R_+$ with intensity measure $\alpha\diff t{\times}\exp(-a)\diff a$, it can be represented as the sequence of points $(u_n,E_n)$, where $(u_n)$ is a Poisson process on $\R_+$ with rate $\alpha$ and $(E_n)$ is an i.i.d. sequence of exponential random variables with parameter $1$, independent of $(u_n)$. 

For $\gamma$, $\beta{>}0$, we define $(R_0(t))$, the solution of the SDE
\begin{equation}\label{AIMDeq0}
  \diff R_0(t) = {-}\frac{1}{\gamma}R_0(t)\diff t +\int_{a{\in}\R_+} \left(\sqrt{R_0(t{-})^2{+}\frac{a}{\beta}}-R_0(t{-})\right){\cal N}_0(\diff t, \diff a),
\end{equation}
with $R_0(0){=}v{\ge}0$.

It should be noted that the integral expression in the above SDE is just a Dirac measure, since there is only one ``$a$'' when there is a jump in $t$. 
\begin{proposition}\label{VoInv}
  The process $(R_0(t))$ converges in distribution as $t$ goes to infinity to a random variable $R_0(\infty)$, such that  $R_0(\infty)^2$  has the distribution $\Gamma_0(\alpha \gamma/2,\beta)$ of Definition~\ref{GamDef}.
\end{proposition}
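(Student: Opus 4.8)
The plan is to follow the strategy of the proof of the preceding proposition on $(R_1(t))$: identify the invariant distribution $\nu_0$ of the Markov process $(R_0(t))$ through its embedded chain at the jump instants, and compute a Laplace-type transform of $\nu_0$. It is convenient to work with the squared process $(Z(t))\steq{def}(R_0(t)^2)$: between two successive points of $\mathcal{N}_0$ the relation $\dot R_0{=}{-}R_0/\gamma$ gives $\dot Z{=}{-}2Z/\gamma$, while the jump $R_0{\to}\sqrt{R_0^2{+}a/\beta}$ turns into $Z{\to}Z{+}a/\beta$; since the intensity of $\mathcal{N}_0$ makes $a$ exponential with parameter $1$ at a jump, the increment $a/\beta$ of $Z$ is exponential with parameter $\beta$. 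Thus $(Z(t))$ is a generalized Ornstein--Uhlenbeck process, with exponential decay at rate $2/\gamma$ and, at the epochs of a Poisson process with rate $\alpha$, i.i.d. upward jumps of exponential distribution with parameter $\beta$. Writing $(\tau_n)$ for the i.i.d. sequence of inter-jump durations, $\tau_n$ exponential with parameter $\alpha$, and $(E_n)$ for an independent i.i.d. sequence of exponential variables with parameter $1$, the chain $(R_n)$ of values of $(R_0(t))$ right after jumps satisfies $R_{n+1}^2{=}\exp({-}2\tau_{n+1}/\gamma)R_n^2{+}E_{n+1}/\beta$.

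The invariant distribution of this chain is the law of $\sqrt W$, where
\[
W\steq{def}\sum_{k\ge1}\frac{E_k}{\beta}\exp\left({-}\frac{2}{\gamma}\left(\tau_1{+}\cdots{+}\tau_{k-1}\right)\right)
\]
is an almost surely convergent exponential functional satisfying $W\steq{dist}E_1/\beta{+}\exp({-}2\tau_1/\gamma)\,W'$ with $W'$ an independent copy of $W$. Setting $(t_m){=}(\tau_1{+}\cdots{+}\tau_m)$, a Poisson process with rate $\alpha$, the sum $\sum_{m\ge1}(E_{m+1}/\beta)\exp({-}2t_m/\gamma)$ is the integral of $(v/\beta)\exp({-}2u/\gamma)$ against a Poisson point process on $\R_+^2$ with intensity $\alpha\diff u{\otimes}e^{-v}\diff v$; Proposition~1.5 of~\citet{Robert} and the change of variable $w{=}\exp({-}2u/\gamma)$ then give that this sum has Laplace transform $(\beta/(\beta{+}\xi))^{\alpha\gamma/2}$ at $\xi{\ge}0$. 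Adding the independent term $E_1/\beta$, exponential with parameter $\beta$, one obtains that $W$ has the distribution $\Gamma_0(1{+}\alpha\gamma/2,\beta)$.

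Since $(R_0(t))$ is a piecewise-deterministic Markov process with constant jump rate $\alpha$, the invariant distribution $\nu_0$ is recovered from the chain by the renewal identity
\[
\int_{\R_+}f\diff\nu_0=\alpha\,\E\left(\int_0^{\tau}f\!\left(\sqrt W\,e^{-s/\gamma}\right)\diff s\right)=\alpha\int_0^{+\infty}\E\!\left(f\!\left(\sqrt W\,e^{-s/\gamma}\right)\right)e^{-\alpha s}\diff s,
\]
the exact analogue of Relation~\eqref{Invnu1} with $\tau$, the first jump time, exponential with parameter $\alpha$ for every starting point. Taking $f(x){=}e^{-\xi x^2}$, using the Laplace transform of $W$ above and the change of variable $w{=}e^{-2s/\gamma}$, the right-hand side reduces to $p\beta^{1+p}\int_0^1 w^{p-1}(\beta{+}\xi w)^{-(p+1)}\diff w$ with $p{=}\alpha\gamma/2$, which equals $(\beta/(\beta{+}\xi))^{\alpha\gamma/2}$ by means of the antiderivative $\bigl(w/(\beta{+}\xi w)\bigr)^{p}$. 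Hence $\int_{\R_+}e^{-\xi x^2}\nu_0(\diff x){=}(\beta/(\beta{+}\xi))^{\alpha\gamma/2}$, that is, the square of a $\nu_0$-distributed variable has the law $\Gamma_0(\alpha\gamma/2,\beta)$. The convergence in distribution of $(R_0(t))$ to $\nu_0$ follows from the positive recurrence of the chain $(R_n^2)$, guaranteed by $\E(\ln\exp({-}2\tau_1/\gamma)){=}{-}2/(\alpha\gamma){<}0$, together with the boundedness of the jump rate.

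The delicate point is the passage from the invariant law of the embedded chain to that of the continuous-time process: the two differ, the shape parameter of the Gamma law decreasing from $1{+}\alpha\gamma/2$ to $\alpha\gamma/2$, and this is where the renewal identity and the elementary integral above must be handled carefully. A shortcut that bypasses the chain altogether is to note that $(Z(t))$ solves the linear equation $\diff Z(t){=}{-}(2/\gamma)Z(t)\diff t{+}(1/\beta)\int_{\R_+}a\,\mathcal{N}_0(\diff t,\diff a)$, so that $Z(t){=}Z(0)e^{-2t/\gamma}{+}(1/\beta)\int_{(0,t]\times\R_+}e^{-2(t-s)/\gamma}a\,\mathcal{N}_0(\diff s,\diff a)$; letting $t{\to}{+}\infty$, the last integral converges in distribution, by stationarity of $\mathcal{N}_0$, to the integral of $e^{-2u/\gamma}a/\beta$ against a Poisson point process on $\R_+^2$ with intensity $\alpha\diff u{\otimes}e^{-a}\diff a$, and Proposition~1.5 of~\citet{Robert} gives directly that its Laplace transform is $(\beta/(\beta{+}\xi))^{\alpha\gamma/2}$, yielding both the convergence and the identification at once.
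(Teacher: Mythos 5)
Your argument is correct, and your main route is genuinely different from the paper's. The paper proves the result in one stroke: it passes to $W(t){=}R_0(t)^2$, observes that $(W(t))$ solves a \emph{linear} SDE with constant-in-state Poisson input, writes the explicit solution $W(t){=}W(0)e^{-2t/\gamma}{+}\beta^{-1}\int_0^t a\,e^{-2(t-s)/\gamma}{\cal N}_0(\diff s,\diff a)$, uses translation invariance of the Poisson process to identify the limit $W_\infty$ as a Poissonian exponential functional, and computes its Laplace transform $(\beta/(\beta{+}\xi))^{\alpha\gamma/2}$ directly — this is exactly the ``shortcut'' you describe in your last paragraph. Your main argument instead mimics the paper's proof for $(R_1(t))$: embedded chain at jump epochs, identification of its stationary law as the fixed point of the affine recursion $R_{n+1}^2{=}e^{-2\tau_{n+1}/\gamma}R_n^2{+}E_{n+1}/\beta$ (giving $\Gamma_0(1{+}\alpha\gamma/2,\beta)$ for the square), then the Palm/renewal inversion to the time-stationary law; your integral computation via the antiderivative $\bigl(w/(\beta{+}\xi w)\bigr)^{p}$ is correct and the shape parameter indeed drops from $1{+}\alpha\gamma/2$ to $\alpha\gamma/2$. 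What the chain route buys is the explicit post-jump stationary distribution, which is of independent interest and runs in parallel with the $(R_1(t))$ case; what the paper's (and your shortcut's) route buys is that the convergence in distribution of $(R_0(t))$ itself comes for free from the explicit solution, whereas your appeal to ``positive recurrence of the chain'' only yields stationarity and convergence along jump epochs — upgrading that to convergence at deterministic times would need an extra (renewal-type or coupling) step. Since you supply the direct SDE argument as well, the proposal as a whole is complete.
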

\begin{proof}
If $(W(t)){=}(R_0(t)^2)$, we  obtain easily  with a change of variable that the relation
\[
\diff W(t) = {-}\frac{2}{\gamma}W(t)\diff t +\frac{1}{\beta} \int_{a{\in}\R_+} a\, {\cal N}_0(\diff t, \diff a). 
\]
Its unique  solution is given by, for $t{\ge}0$,
\begin{multline*}
W(t)=W(0)e^{-2 t/\gamma}{+}\frac{1}{\beta}\int_0^t a e^{-2(t{-}s)/\gamma}{\cal N}_0(\diff s, \diff a)
\\\steq{dist} W(0)e^{-2 t/\gamma}{+}\frac{1}{\beta}\int_{0}^{t} a e^{-2s/\gamma}{\cal N}_0(\diff s, \diff a),
\end{multline*}
by reversibility and invariance of the Poisson process by translation. 
We obtain that $(W(t))$ converges in distribution to
\[
W_\infty\steq{def}\frac{1}{\beta}\int_{0}^{+\infty} a e^{-2 s/\gamma}{\cal N}_0(\diff s, \diff a).
\]
For $\xi{\ge}0$, by using the representation of the Laplace transform of  a Poisson process, see Proposition~1.5 of~\citet{Robert},  we get that
\begin{multline*}
\E\left(e^{-\xi W_\infty}\right)=
\exp\left(-\alpha \int_{\R_+^2}\left(1-\exp\left(-\frac{\xi}{\beta}a e^{-2 s/\gamma}\right)\right)e^{-a}\diff a\diff s\right)
\\=\exp\left(-\alpha \int_{0}^{+\infty}\frac{\xi  e^{-2s/\gamma}}{\beta{+}\xi  e^{-2 s/\gamma}}\diff s\right)
=\left(\frac{\beta}{\beta{+}\xi}\right)^{\alpha\gamma/2}.
\end{multline*}
We conclude that the distribution of $W_\infty$ is  $\Gamma_0(\alpha \gamma/2,\beta)$.
\end{proof}

\subsection{A Limiting Result on a Particle System}\label{PartSec}

We investigate the limiting behavior of the time evolution of a Markov process on $\N$ which is described in terms of a  particle system. A process of this type plays an important role in a time-changed version of the process $(U_4^N(t))$ in Section~\ref{FNII}. A limiting result is established, Theorem~\ref{LimitOccApp}, it plays a central role in the asymptotic analysis of the CRN with four nodes in Section~\ref{FNII}.

The kinetics are as follows: At time $t$, we are given $G$, an exponential random variable variable with parameter $\sigma$, and $(E_i)$ an  independent  i.i.d. sequence of exponential random variables  with parameter $\lambda$.
\begin{itemize}
\item Arrivals. A new particle arrives at rate $P_N(t{-})$, where $(P_N(t))$ is an adapted \cadlag process.
\item Departures. If there are $x{\in}\N$ particles, at rate $\delta x$, $\delta{>}0$, any particle $1{\le}i{\le}x$ such that $E_i{\le}G$ is removed.  
\end{itemize}
With the definition of Section~\ref{defnot}, ${\cal N}_\sigma$ is a Poisson marked point process  on the state space $\R_+^2{\times}\R_+{\times}\R_+^\N$ whose intensity measure is
\[
\diff s{\otimes}\diff t{\otimes}\sigma\exp({-}\sigma a)\diff a{\otimes}Q(\diff b),
\]
where $Q$ is the distribution on $\R_+^\N$ of an  i.i.d. sequence $(E_i)$ of exponential random variables with parameter $\lambda$.

The process of the number of particles, $(K_N(t))$,  is  defined as the solution of the SDE,
\begin{multline}\label{SDEBN}
\diff K_N(t)={\cal P}_0((0,P_N(t{-})),\diff t)\\{-}\int_{(a,b){\in}\R_+{\times}\R_+^\N} S(K_N(t-),a,b) {\cal N}\left(\left(0,\delta K_N(t{-})\right],\diff t,\diff a, \diff b\right),
\end{multline}
such that $K_N(0){=}w_N{\in}\N$. For $a{\ge}0$ and $b{=}(b_i){\in}\R_+^\N$, we denote
\begin{equation}\label{Sdef}
S(n,a,b)\steq{def}\sum_{i=1}^{n}\ind{b_i{\le} a}.
\end{equation}
It can be shown, with the criterion of the modulus of continuity, that the process  $(\overline{K}_N(t)){=}(K_N(t)/\sqrt{N})$ has convenient tightness properties on the timescale $(t/\sqrt{N})$. Since we are interested in the asymptotic behavior of this process on the normal timescale, we will investigate the asymptotic behavior of its occupation measure $\mu_N$ defined by
\begin{equation}\label{OccMA}
\croc{\mu_N,f}=\int_0^T F\left(s,\overline{K}_N(s)\right)\diff s=\int_0^T F\left(s,\frac{K_N(s)}{\sqrt{N}}\right)\diff s,
\end{equation}
for $F{\in}{\cal C}_c([0,T]{\times}\R_+)$.  
\begin{theorem}\label{LimitOccApp}
If  $(P_N(t))$ is a \cadlag adapted process on $\N$ such that $(P_N(t)/N)$ is a bounded process converging in distribution to a continuous positive process $(p(t))$ and  $(K_N(t))$ is the solution of  SDE~\eqref{SDEBN}, $K_N(0){=}w_N$ and the sequence $(w_N/N)$ is bounded, then the sequence of occupation measures $(\mu_N)$ defined by Relation~\eqref{OccMA}  converges in distribution to the measure $\mu_\infty$ defined by 
  \[
  \int_0^T \int_0^{+\infty} F\left(s,x\right)\mu_\infty(\diff s,  \diff x)
  =\int_0^T \int_0^{+\infty} F\left(s,\sqrt{x}\right)\Gamma_0\left(\frac{\sigma}{2\lambda}{+}\frac{1}{2}, \frac{\delta}{2p(s)}\right)(\diff x)\diff s,
\]
for $F{\in}{\cal C}_c([0,T]{\times}\R_+)$, where $\Gamma_0$ is the distribution of Definition~\ref{GamDef}.
\end{theorem}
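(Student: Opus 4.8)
The plan is to run a stochastic averaging argument on the occupation measure $(\mu_N)$, in the spirit of the proof of Theorem~\ref{TheoImp}, the ``fast'' process being here an AIMD process of the type studied in Section~\ref{V1sec}. Throughout, $\eta{\in}(0,T)$ is a small parameter; all estimates will be derived on $[\eta,T]$, the contribution of $[0,\eta]$ to $\croc{\mu_N,F}$ being bounded by $\|F\|_\infty\eta$, and one lets $\eta{\to}0$ at the end. The delicate point, discussed below, will be the replacement, inside the generator, of the thinned variable $\tfrac{1}{\sqrt N}S(K_N(s),a,b)$ by $\overline K_N(s)(1{-}e^{-\lambda a})$, uniformly enough to integrate it against the random occupation measure, together with the control of the initial layer on which $\overline K_N$ may be of order $\sqrt N$.

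\emph{Step 1: moment bounds and tightness.} From~\eqref{SDEBN} and the identity $\E_{a,b}(S(n,a,b))=n\lambda/(\sigma{+}\lambda)$, one gets
\[
\frac{\diff}{\diff t}\E\bigl(K_N(t)\bigr)\le \E\bigl(P_N(t)\bigr)-\frac{\delta\lambda}{\sigma{+}\lambda}\,\E\bigl(K_N(t)\bigr)^2,
\]
and since $\E(P_N(t))\le CN$, a comparison with the ODE $\dot y{=}C\sqrt N{-}c\sqrt N\,y^2$ gives $\sup_N\sup_{t\ge\eta}\E\bigl(\overline K_N(t)^k\bigr)<{+}\infty$ for $k{=}1,2,3$ (the constant deteriorating as $\eta{\to}0$, since $\overline K_N(0){=}w_N/\sqrt N$ may be of order $\sqrt N$); higher moments are treated the same way by writing the SDEs for $\E(K_N(t)^k)$. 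In particular no mass of $\mu_N$ escapes to infinity and, with Lemmas~1.3 and~1.4 of~\citet{Kurtz1992}, the sequence $(\mu_N)$ is tight and any subsequential limit $\mu_\infty$ writes $\mu_\infty(\diff s,\diff x){=}\pi_s(\diff x)\diff s$ for an optional process $(\pi_s)$ of probability measures on $\R_+$.

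\emph{Steps 2 and 3: balance relation and identification.} For $h{\in}{\cal C}_c^1(\R_+)$, Itô's formula applied to $h(\overline K_N(t))$ with~\eqref{SDEBN} gives $h(\overline K_N(t))=h(\overline K_N(0))+M_h^N(t)+\int_0^t{\cal L}_Nh(s)\diff s$, where $M_h^N$ is a martingale and ${\cal L}_Nh(s)=P_N(s)(h(\overline K_N(s){+}\tfrac1{\sqrt N}){-}h(\overline K_N(s)))+\delta K_N(s)\E_{a,b}(h(\overline K_N(s){-}\tfrac1{\sqrt N}S(K_N(s),a,b)){-}h(\overline K_N(s)))$; both terms are $O(\sqrt N)$. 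Dividing by $\sqrt N$, the boundary terms vanish because $h$ is bounded, and a computation of $\croc{M_h^N}$ using $S(n,a,b){\le}n$ and the third-moment bound of Step~1 gives $\E(\croc{M_h^N}(t){-}\croc{M_h^N}(\eta))=O(\sqrt N)$, hence $M_h^N/\sqrt N\to0$ in $L^2$ on $[\eta,T]$, so $\tfrac1{\sqrt N}\int_\eta^t{\cal L}_Nh(s)\diff s\to0$. In $\tfrac1{\sqrt N}{\cal L}_Nh(s)$ the arrival term is $\tfrac{P_N(s)}{N}h'(\overline K_N(s))+o(1)$ and, conditionally on $a$, $S(K_N(s),a,b)$ is binomial of parameters $K_N(s)$ and $1{-}e^{-\lambda a}$, so $\tfrac1{\sqrt N}S(K_N(s),a,b){=}\overline K_N(s)(1{-}e^{-\lambda a}){+}\varepsilon_N$ with $\E_{a,b}(|\varepsilon_N|)\le C\overline K_N(s)^{1/2}N^{-1/4}$; with $\|h'\|_\infty$ and the $3/2$-moment bound the error is negligible on $[\eta,T]$, so the departure term is $\delta\overline K_N(s)\int_0^{+\infty}(h(\overline K_N(s)e^{-\lambda a}){-}h(\overline K_N(s)))\sigma e^{-\sigma a}\diff a+o(1)$. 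Passing to the limit jointly in $(\mu_N,P_N/N)$, which converges to $(\mu_\infty,p)$ with $P_N/N\to p$ uniformly since $p$ is continuous, yields, for a.e.\ $s$ and all $h{\in}{\cal C}_c^1(\R_+)$,
\[
\int_{\R_+}\Bigl(p(s)h'(x)+\delta x\int_0^{+\infty}\bigl(h(xe^{-\lambda a}){-}h(x)\bigr)\sigma e^{-\sigma a}\,\diff a\Bigr)\pi_s(\diff x)=0.
\]
The change of variable $u{=}e^{-\lambda a}$ turns the bracketed operator into $p(s)$ times the infinitesimal generator of the AIMD process $(R_1(t))$ of Section~\ref{V1sec} with parameters $\alpha{=}\delta/p(s)$ and $\beta{=}\sigma/\lambda$. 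Since that process has a unique invariant distribution, explicitly given in Section~\ref{V1sec}, $\pi_s$ is this distribution: if $Y{\sim}\pi_s$ then $Y^2$ has distribution $\Gamma_0\bigl(\tfrac{\beta{+}1}{2},\tfrac{\alpha}{2}\bigr){=}\Gamma_0\bigl(\tfrac{\sigma}{2\lambda}{+}\tfrac12,\tfrac{\delta}{2p(s)}\bigr)$, whence $\int F(s,x)\pi_s(\diff x){=}\int F(s,\sqrt x)\Gamma_0(\tfrac{\sigma}{2\lambda}{+}\tfrac12,\tfrac{\delta}{2p(s)})(\diff x)$. This identifies $\mu_\infty$ uniquely and, letting $\eta{\to}0$, proves the convergence in distribution of $(\mu_N)$ to $\mu_\infty$.

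\emph{Main obstacle.} The hard part is the averaging of the departure term: one must replace $\tfrac1{\sqrt N}S(K_N(s),a,b)$ by $\overline K_N(s)(1{-}e^{-\lambda a})$ in the (random, state-dependent) jump kernel and control the resulting error integrated against $\mu_N$, which forces the combination of the binomial thinning structure of $S$ with the moment estimates of Step~1; a secondary technical nuisance is the initial layer $[0,\eta]$, on which $\overline K_N$ is only $O(\sqrt N)$, which is why all estimates are carried out on $[\eta,T]$ with $\eta$-dependent constants and $\eta$ is sent to $0$ only at the very end.
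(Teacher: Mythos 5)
Your proposal is correct and follows essentially the same route as the paper's proof: tightness of the occupation measure via moment bounds and Lemmas~1.3--1.4 of Kurtz, vanishing of the martingale after division by $\sqrt{N}$, replacement of the binomial thinning $S(K_N(s),a,b)$ by $K_N(s)(1{-}e^{-\lambda a})$, and identification of the limiting kernel as the invariant distribution of the AIMD process of Section~\ref{V1sec} run at speed $p(s)$ with $\alpha{=}\delta/p(s)$ and $\beta{=}\sigma/\lambda$. The only (harmless) differences are technical: the paper works with a time-integrated second-moment bound on all of $[0,T]$ rather than pointwise moment bounds on $[\eta,T]$ with an initial layer, and it controls the thinning error by a uniform law of large numbers rather than your quantitative variance estimate.
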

\begin{proof}

The proof is done in several steps.

 \medskip
 \noindent
{\sc Step 1: Tightness properties of $(\mu_N)$.}\\

By integration of Relation~\eqref{SDEBN} and taking the expected value, we obtain that
\[
\delta\frac{\lambda}{\lambda{+}\sigma} \int_0^T \E\left(\overline{K}_N(s)^2\right)\diff s\le\frac{w_N}{N}{+}\int_0^T \E\left(\frac{P_N(s)}{N}\right)\diff s,
\]
hence
\begin{equation}\label{Fq2}
\sup_{N\ge 1}\int_0^T \E\left(\overline{K}_N(s)^2\right)\diff s <{+}\infty.
\end{equation}
This Relation will be used frequently in the proof of the different convergence theorems of Section \ref{FNII}.
Relation~\eqref{Fq2}, Lemmas~1.3 and~1.4 of~\citet{Kurtz1992} show that 1) the sequence $(\mu_N)$ is tight for the convergence in distribution, and 2)  if  $\mu_\infty$ is the limit of some converging subsequence $(\mu_{N_k})$, then it can be represented as
\[
\croc{\mu_{\infty},F}=\int_{[0,T]{\times}\R_+}F(s,x)\gamma_s(\diff x)\diff s,
\]
for $F{\in}{\cal C}_c([0,T]{\times}\R_+)$, where $(\gamma_s)$ is an optional process with values in probability distributions on $\R_+$.

If $f$ is a bounded continuous function on $\R_+$, then for $T{\ge}0$,
\[
\E\left(\int_0^T f\left(\overline{K}_N(s)\right)\overline{K}_N(s)\ind{\overline{K}_N(s){\ge}K}\diff s\right)\leq \frac{\|f\|_\infty}{K}\int_0^T\E\left(\overline{K}_N(s)^2\right)\diff s,
\]
hence, for the convergence in distribution
\begin{multline*}
\lim_{N\to+\infty} \int_0^T f\left(\overline{K}_N(s)\right)\overline{K}_N(s)\diff s\\=\int xf(x)\ind{s\le T} \mu_\infty(\diff s,\diff x)=\int_0^T \int_0^{+\infty} xf(x)\gamma_s(\diff x)\diff s.
\end{multline*}

The next steps are devoted to the identification of the limit. 

\medskip
\noindent
{\sc Step 2: Control of martingales.}\\
If $f$ is some continuous bounded function on $\R_+$, then,  for $t{\ge}0$, standard stochastic calculus gives the relation, for $t{\in}[0,T]$,
\begin{multline}\label{Fq1}
f(\overline{K}_N(t))=  f(\overline{K}_N(0))+ M_f^N(t)\\
+\int_0^t \left(f\left(\overline{K}_N(s){+}\frac{1}{\sqrt{N}}\right){-}f\left(\overline{K}_N(s)\right)\right) P_N(s)\diff s \\
+\int_0^t \left(\E\left(f\left.\left(\frac{1}{\sqrt{N}}\sum_{1}^{K_N(s)}\ind{E_i{>}G}\right)\right|{\cal F}_s\right){-}f\left(\overline{K}_N(s)\right)\right) \delta K_N(s)\diff s,
\end{multline}
where $(M_f^N(t))$ is a martingale. The expectation value of the last line is taken on the random variables $G$ and $(E_i)$.  If $f$ is ${\cal C}_1$ with compact support, its predictable increasing process $(\croc{M_f}(t))$ is such that
\begin{equation}\label{Fq3}
\croc{M_f}(t)\leq 4\|f'\|_\infty^2\int_0^t \frac{P_N(s)}{N}\diff s+4\delta\|f\|_\infty^2\int_0^t K_N(s)\diff s.
\end{equation}

With Relations~\eqref{Fq3} and~\eqref{Fq2}, we obtain that the sequence $(\E(\croc{M_f^N/\sqrt{N}}(T)))$ is converging to $0$ and, therefore, with Doob's Inequality, that the martingale $(M_f^N(t)/\sqrt{N})$ converges in distribution to $0$.

\medskip
\noindent
{\sc Step 3: A Technical estimate.}\\
This step is dedicated to the estimation of the height of the negative jumps of $(K_N(t))$. Let $f{\in}{\cal C}_c(\R_+)$,
and define $\Delta_N(f)$ as 
\[
\int_0^T \left|\E\left(f\left.\left(\overline{K}_N(s)\frac{1}{K_N(s)}\sum_{1}^{K_N(s)}\ind{E_i{>}G}\right){-} f\left(\overline{K}_N(s)e^{-\lambda G}\rule{0mm}{7mm}\right)\right|{\cal F}_s\right)\right|\overline{K}_N(s)\diff s.
\]
 The law of large numbers gives the relation
\[
\lim_{K_0\to+\infty} \sup_{x{\ge}0} \left|f\left(x\frac{1}{K_0}\sum_{1}^{K_0}\ind{E_i{>}G}\right){-} f\left(xe^{-\lambda G}\right)\right|=0
\]
which holds almost surely, and by Lebesgue's Theorem in $L_1$. The integrand of quantity $\Delta_N(f)$, with the factor $K_N(s)$ excluded,  is expressed as the sum of two terms. One with  the indicator function of the set $\{\overline{K}_N(s){\ge} \eta\}{=}\{K_N(s){\ge} \eta\sqrt{N}\}$, for some small $\eta{>}0$. The last estimate can then be used for this term. The other term with the set $\{\overline{K}_N(s){\le}\eta\}$ is negligible since the integrand is the difference of two expressions which are arbitrarily both close to $f(0)$. 

Relation~\eqref{Fq2} and Cauchy-Shwartz's Inequality give that, for $K_1{>}0$,
\[
\sup_{N}\P\left(\int_0^T\overline{K}_N(s)\diff s {\ge}K_1\right)\le \frac{T}{K_1}\sup_{N}\sqrt{\int_0^T \E\left(\overline{K}_N(s)^2\right)\diff s}.
\]
By combining these results, we obtain that the sequence $(\Delta_N(f))$ converges in distribution to $0$. 

\medskip
\noindent
{\sc Step 4: Conclusion}\\
If  $f{\in}{\cal C}_c^1$, by dividing Relation~\eqref{Fq1} by $\sqrt{N}$, and letting $N$ go to infinity, we obtain that, almost surely, the relation
\[
\int_0^t\int_{\R_+} p(s)f'(x)\gamma_s(\diff x) \diff s {+}\int_0^t \int_{\R_+^2}\left(f\left(xe^{-\lambda a}\right){-}f\left(x\right)\right)\delta x\sigma e^{-\sigma a}\diff a \gamma_s(\diff x)\diff s=0,
\]
holds for all $t{\ge}0$. Indeed it clearly holds almost surely for a fixed $t$ and,  therefore for all $t{\in}\Q$, consequently for all $t{\ge}0$ by continuity. We have established the relation, almost surely 
\[
\left(\int_0^t\croc{{\cal A}_{p(s)}(f),\gamma_s}\diff s\right)=(0),
\]
with, for $x{\ge}0$, $a{>}0$
\[
{\cal A}_a(f)(x)= af'(x) {+}\delta x\int_0^1 \left(f\left(x u^{\lambda/\sigma}\right){-}f\left(x\right)\right)\diff u.
\]
    Note that ${\cal A}_a$ is the infinitesimal generator of the Markov process $(R_1(a t))$, where $(R_1(t))$ is defined in Section~\ref{V1sec} with $\beta{=}\sigma/\lambda$ and $\alpha{=}\delta/a$. Hence, there exists a subset $S_f$ of $[0,T]$, negligible for the Lebesgue's measure, such that, almost surely, the relation $\croc{{\cal A}_{p(s)}(f),\gamma_s}{=}0$ holds, for all $s{\in}[0,T]{\setminus}S_f$. Since the set of  ${\cal C}_1$ functions with compact support on $\R_+$ is separable for the uniform norm, there exists $S_0$ of $[0,T]$, negligible for the Lebesgue's measure such that,  almost surely,  for any $f$ in  a dense subset of such functions the relation  $\croc{{\cal A}_{p(s)}(f),\gamma_s}{=}0$ for all $s{\in}[0,T]{\setminus}S_0$. Proposition~9.2 of~\citet{KurtzEthier} gives that for $s{\in}[0,T]{\setminus}S_0$, $\gamma_s$ is the invariant distribution of $(R_1(t))$. The proposition is proved. 
\end{proof}

\section{The Four Species CRN}\label{FNI}
We investigate the asymptotic behavior of the CRN with four nodes starting from an initial state of the form
$(0,N,0,0)$ for some large $N$. As explained in the introduction, we did not try a complete classification of initial states from the point of view of the asymptotic behavior of $(X_N(t))$ as we have done for $m{=}3$. We do believe however that this is {\em the} interesting  class of initial states, i.e. with a really unusual asymptotic behavior.

By using  Filonov's Theorem, see Theorem~6 of~\citet{LR23}, it can be proved, with some tedious but straightforward technicalities,  that the associated Markov process $(X(t))$  is positive recurrent. In particular, starting from $(0,N,0,0)$ the second coordinate $(X_2^N(t))$ will eventually decrease. The goal of this section and of Section~\ref{FNII} is of characterizing this decay. 

Up to now, we have seen that the ordinary timescale was enough to observe the decay of the norm of the process. See Theorem~\ref{TheoImp} and Sections~\ref{0N03} and~\ref{N003}. For this initial state, the situation is significantly different. It turns out that the convenient timescale is $(\sqrt{N}t)$ and that on this timescale the process lives in a set of states of the form $(a,y_N,b,v_N)$ with $a$, $b{\in}\N$ and $y_N$ and $v_N$ are respectively of the order of $N$ and $\sqrt{N}$.

We analyze the scaling properties of this CRN on the normal timescale in this section. It is shown that, with a scaling in space, the Markov process converges in distribution to a jump process under a  convenient topology on the space of \cadlag functions ${\cal D}([0,T],\R_+^4)$.  The limiting process is an AIMD process whose invariant distribution has been investigated in Section~\ref{V0sec}. The asymptotic behavior on the timescale  $(\sqrt{N}t)$ is analyzed in Section~\ref{FNII}. 

\begin{figure}[ht]
  \centerline{
    \begin{tikzpicture}[->,node distance=1.7cm]
                        \node (O) [below] {$\emptyset$};
                        \node (A) [right of=O] {$S_1$};
                        \node (B) [right of=A] {$S_1{+}S_2$};
                        \node (C) [right of=B] {$S_2{+}S_3$};
                        \node (D) [right of=C] {$S_3{+}S_4$};
                        \node (E) [right of=D] {$S_4$};
                        \node (F) [right of=E] {$\emptyset$};
                        \draw[-left to] (O)  -- node[above] {$\kappa_{0}$} (A);
                        \draw[-left to] (A)  -- node[above] {$\kappa_{1}$} (B);
                        \draw[-left to] (B)  -- node[above] {$\kappa_{2}$} (C);
                        \draw[-left to] (C)  -- node[above] {$\kappa_{3}$} (D);
                        \draw[-left to] (D)  -- node[above] {$\kappa_{4}$ } (E);
                        \draw[-left to] (E)  -- node[above] {$\kappa_{5}$ } (F);
  \end{tikzpicture}}
\end{figure}

The set of SDEs for the state $(X_N(t)){=}(X_i^N(t))$ network is 
\begin{equation}\label{SDEm4}
\begin{cases}
\diff X_1^N(t)&\displaystyle{=}{\cal P}_{0}\left(\left(0,\kappa_{0}\right),\diff t\right){-}{\cal P}_{2}\left(\left(0,\kappa_{2}X_{1}^NX_{2}^N(t{-})\right),\diff t\right),\\
\diff X_2^N(t)&{=}{\cal P}_{1}\left(\left(0,\kappa_{1}X_{1}^N(t{-})\right),\diff t\right){-}{\cal P}_{3}\left(\left(0,\kappa_{3}X_{2}^NX_{3}^N(t{-})\right),\diff t\right),\\
\diff X_3^N(t)&={\cal P}_2((0,\kappa_2X_1^NX_2^N(t{-})),\diff t){-}{\cal P}_{4}\left(\left(0,\kappa_{4}X_{3}^NX_{4}^N(t{-})\right),\diff t\right),\\
\diff X_4^N(t)&={\cal P}_3((0,\kappa_3X_2^NX_3^N(t{-})),\diff t){-}{\cal P}_{5}((0,\kappa_{5}X_4^N(t{-})),\diff t).
\end{cases}
\end{equation}

\subsection{Scaling Properties}\label{CycDef}
The rest of this section and Section~\ref{FNII} are devoted to the asymptotic behavior of $(X_N(t))$ when the initial state is $X_N(0){=}(0,y_N,0,v_N)$, with
\begin{equation}\label{InitIV}
\lim_{N\to+\infty} \left(\frac{y_N}{N},\frac{v_N}{\sqrt{N}}\right)=(y,v){\in}\R_+^2,\quad y{>}0. 
\end{equation}
The time evolution of the process $(X_N(t))$ is investigated, in a natural way, by a representation in terms of several steps of a cycle defined in terms of the points of ${\cal P}_0((0,\kappa_0],\diff t)$. We give a heuristic description of it for the moment. 
The cycle describes, with high probability,  the time evolution of the CRN in terms of the  values of $(X_1^N(t),X_3^N(t))$ with the successive states $(0,0)$, $(1,0)$, $(0,1)$ and $(0,0)$. This is not a formal definition but more an (hopefully) insightful picture of an important aspect of the kinetics of our CRN.
\subsection*{A Heuristic Description of a Cycle}
\begin{enumerate}
\item   If the initial state is $(0,y_N,0,v_N)$. Let $t_1$ be the first point of the  point process ${\cal P}_0((0,\kappa_0],\diff t)$. 
On the time interval $[0,t_1)$, only the last coordinate  $(X_4^N(t))$ changes, via the SDE
\begin{equation}\label{SDEA}
\diff A_N(t)={-}{\cal P}_{5}((0,\kappa_{5}A_N(t{-})),\diff t),
\end{equation}
with $A_N(0){=}v_N$. 

At time $t_1$ the state of the CRN is $(1,y_N,0,X_4^N(t_1))$.
\item\label{1y0}  If the initial state is $(1,y_N,0,v_N)$,  the variable $\tau_N^1$ is the time when  the $1$ at the first coordinate ``moves'' to the third coordinate. 
  At this instant,  on the event $\{t_1{>}\tau_N^1\}$,  the state becomes  $(0,y_N{+}Y_N,1,v_N{-}W_N)$, where
  \[
  Y_N={\cal P}_1((0,\kappa_1){\times}(0,\tau_N^1)) \text{ and }
  W_N=\int_0^{\tau_N^1}{\cal P}_5((0,\kappa_5X_4^N(s{-})),\diff s).
  \]
It is not difficult to see that the random variable $\tau_N^1$ is of the order of $1/N$ and, consequently, that  the sequence $(Y_N/N,W_N/\sqrt{N})$ is converging in distribution to $(0,0)$. At time $\tau_N^1$, the process starts at a state ``close'' to $(0,y_N,1,v_N)$.
\item   If the initial state is  $(0,y_N,1,v_N)$, the second coordinate may decrease until the time $\tau_N^2$ when the ``1'' of the third coordinate becomes $0$. The state is at this moment $(0,X_2^N(\tau_N^2),0,X_4^N(\tau_N^2))$ with high probability.
\end{enumerate}

In this approximate description, the possible values for the first and the third coordinates either $0$ or $1$. This turns out to be  essentially an accurate asymptotic  description of the CRN on the ``normal'' timescale~$(t)$ investigated in this section. A similar statement for the timescale~$(\sqrt{N}t)$ of Section~\ref{FNII} still holds but requires quite different arguments.

The distribution of the duration of the first step is exponential with parameter $\kappa_0$. The decay of $(X_4^N(t))$ occurs essentially during this step. Step~(2) is a (short) transition, mentioned only to have a straight formulation of the limit results. 

The third step is when $(X_2^N(t))$ decreases and $(X_4^N(t))$ builds up. As it will be seen its duration is $O(1/\sqrt{N})$ and the number of jumps of the process during this step is large, of the order of $\sqrt{N}$. This feature has a significant impact on the statements of the scaling results for $(X_2^N(t)/N,X_4^N(t)/\sqrt{N})$  via the topologies used on the space of \cadlag functions.

With a time change, the third step can be ``removed'' and  a convergence result holds for the usual Skorohod topology, the $J_1$-topology. See Proposition~\ref{PropJ1}.  Otherwise, on the full timescale, the $M_1$-Skorohod topology or the $S$-topology has to be used, see Proposition~\ref{PropM1}. See~\citet{Whitt} and~\citet{Jakubowski} for general presentations of these topologies.

In any of these cases, the limiting process is an AIMD process analyzed in Section~\ref{V0sec}. See Figure~\ref{fig4-1}. 
We start with the main limiting result for the third step. 
\begin{proposition}\label{Hconv}
If  $X_N(0){=}(0,y_N,1,v_N)$ satisfy Relation~\eqref{InitIV} and 
  \[
  \tau_N^2\steq{def}\inf\left\{t{>}0: X_3^N(t)=0\right\},
  \]
  then,  under Condition~\eqref{InitIV}, the relation
  \begin{equation}\label{CVj}
  \lim_{N\to+\infty}\left(\sqrt{N}\tau_N^2,\frac{X_2^N(\tau_N^2)}{N},\frac{X_4^N(\tau_N^2)}{\sqrt{N}}\right)=
  (H_{y,v},y,v{+}\kappa_3y H_{y,v}),
  \end{equation}
holds for the convergence in distribution,   where $H_{y,v}$ is a non-negative random variable whose distribution is given by
\begin{equation}\label{Hy}
\E_y(f(H_{y,v}))=\int_0^{+\infty} f\left(\frac{1}{\kappa_3y}\left(\sqrt{v^2{+}2\frac{\kappa_3}{\kappa_4}ys}- v\right)\right)e^{-s}\diff s,
\end{equation}
  for any function $f{\in}{\cal C}_c(\R_+)$. 
\end{proposition}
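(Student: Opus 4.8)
The plan is to reduce the statement to an autonomous auxiliary process that coincides with $(X_N(t))$ up to $\tau_N^2$ with probability tending to $1$, and for which a fluid limit on the timescale $1/\sqrt N$ together with a time-change identity for the killing instant of the third coordinate can be carried out explicitly.

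\emph{Reduction.} Let $\sigma_N$ be the first point of ${\cal P}_{0}((0,\kappa_{0}],\diff t)$; it is exponentially distributed with parameter $\kappa_0$ and independent of ${\cal P}_3,{\cal P}_4,{\cal P}_5$. On $[0,\sigma_N)$ one has $X_1^N{\equiv}0$, so ${\cal P}_2$ never fires and $X_3^N{\equiv}1$ until it is killed by ${\cal P}_4$. Hence, on $[0,\tau_N^2{\wedge}\sigma_N)$ the pair $(X_2^N,X_4^N)$ coincides with the solution $(\widetilde X_2^N,\widetilde X_4^N)$ of
\[
\diff\widetilde X_2^N(t){=}{-}{\cal P}_3\bigl((0,\kappa_3\widetilde X_2^N(t{-})],\diff t\bigr),\quad
\diff\widetilde X_4^N(t){=}{\cal P}_3\bigl((0,\kappa_3\widetilde X_2^N(t{-})],\diff t\bigr){-}{\cal P}_5\bigl((0,\kappa_5\widetilde X_4^N(t{-})],\diff t\bigr),
\]
with $(\widetilde X_2^N(0),\widetilde X_4^N(0)){=}(y_N,v_N)$, and $\tau_N^2{\wedge}\sigma_N{=}\widetilde\tau_N^2{\wedge}\sigma_N$, where $\widetilde\tau_N^2$ is the first point of ${\cal P}_4((0,\kappa_4\widetilde X_4^N(t{-})],\diff t)$. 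Once $\sqrt N\,\widetilde\tau_N^2$ is shown to be tight, the independence of $\sigma_N$ gives $\P(\widetilde\tau_N^2{<}\sigma_N){\to}1$, so it suffices to prove \eqref{CVj}--\eqref{Hy} for $(\widetilde X^N,\widetilde\tau_N^2)$.

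\emph{Fluid limit on the timescale $1/\sqrt N$.} The process $\widetilde X_2^N$ is nonincreasing and each of its jumps is matched by a $+1$ jump of $\widetilde X_4^N$; the number of jumps of ${\cal P}_3$ on $[0,s/\sqrt N]$ is stochastically dominated by a Poisson variable of mean $\kappa_3 y_N s/\sqrt N{=}O(\sqrt N)$, so $\sup_{u\le s}|\widetilde X_2^N(u/\sqrt N){-}y_N|{=}O_{\P}(\sqrt N)$ and $(\widetilde X_2^N(\cdot/\sqrt N)/N)$ converges to $(y)$ uniformly on compact sets. Writing the ${\cal P}_3$-integral in the definition of $\widetilde X_4^N$ as a martingale plus its compensator $\kappa_3\int_0^t\widetilde X_2^N(u{-})\diff u$, using Doob's inequality (the predictable bracket at $t{=}s/\sqrt N$ is $O_{\P}(\sqrt N)$, so the martingale is $O_{\P}(N^{1/4})$) and the fact that the ${\cal P}_5$-term produces only $O_{\P}(1)$ jumps over an interval of length $O(1/\sqrt N)$ (since $\widetilde X_4^N{=}O_{\P}(\sqrt N)$ there), one obtains
\[
\lim_{N\to+\infty}\left(\frac{\widetilde X_4^N(s/\sqrt N)}{\sqrt N}\right)=\bigl(v{+}\kappa_3 y\,s\bigr)
\]
uniformly on compact sets.

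\emph{Time change and identification.} Conditionally on ${\cal P}_3$ and ${\cal P}_5$, the trajectory $t\mapsto\widetilde X_4^N(t)$ is fixed and ${\cal P}_4((0,\kappa_4\widetilde X_4^N(t{-})],\diff t)$ is an inhomogeneous Poisson process; hence $E\steq{def}\kappa_4\int_0^{\widetilde\tau_N^2}\widetilde X_4^N(t{-})\diff t$ is an exponential random variable of parameter $1$, independent of the process $\Lambda_N(t)\steq{def}\kappa_4\int_0^t\widetilde X_4^N(u{-})\diff u$. By the previous step, $\Lambda_N(s/\sqrt N)=\kappa_4\int_0^s\widetilde X_4^N(w/\sqrt N{-})\diff w/\sqrt N$ converges uniformly on compact sets to $\Phi(s)\steq{def}\kappa_4 vs{+}\kappa_3\kappa_4 y s^2/2$, which is continuous, strictly increasing and diverges at infinity. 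Composition with the inverse then gives $\sqrt N\,\widetilde\tau_N^2{=}(\Lambda_N(\cdot/\sqrt N))^{-1}(E)\to\Phi^{-1}(E)$; in particular $\P(\sqrt N\,\widetilde\tau_N^2{>}s){=}\P(E{>}\Lambda_N(s/\sqrt N))\to e^{-\Phi(s)}$, which yields the tightness used above. Since the fluid limit is deterministic, the convergence of the pair $(\sqrt N\,\widetilde\tau_N^2,\widetilde X_4^N(\cdot/\sqrt N)/\sqrt N)$ to $(\Phi^{-1}(E),(v{+}\kappa_3 y\,\cdot))$ holds jointly; evaluating the second coordinate at the first (continuous mapping, the limiting path being continuous) gives $\widetilde X_4^N(\widetilde\tau_N^2)/\sqrt N\to v{+}\kappa_3 y\,\Phi^{-1}(E)$ and $\widetilde X_2^N(\widetilde\tau_N^2)/N\to y$. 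Setting $H_{y,v}\steq{def}\Phi^{-1}(E)$, one has $\P(H_{y,v}{>}x){=}e^{-\Phi(x)}$, and solving $\Phi(x){=}s$ gives $\Phi^{-1}(s){=}(\sqrt{v^2{+}2(\kappa_3/\kappa_4)ys}{-}v)/(\kappa_3 y)$, so that $\E_y(f(H_{y,v})){=}\int_0^{+\infty}f(\Phi^{-1}(s))e^{-s}\diff s$ is exactly \eqref{Hy}, and $v{+}\kappa_3 y H_{y,v}$ is the announced limit of the fourth coordinate.

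\emph{Main obstacle.} The delicate part is the last composition step: one must establish the a priori tightness of $\sqrt N\,\widetilde\tau_N^2$ — which here comes out of the time-change identity together with the one-dimensional estimate $\Lambda_N(s/\sqrt N)\to\Phi(s)$ — before one is entitled to evaluate the rescaled path $\widetilde X_4^N(\cdot/\sqrt N)/\sqrt N$ at the random time $\sqrt N\,\widetilde\tau_N^2$, and one must make that evaluation rigorous, e.g.\ through Skorokhod's representation theorem, using that the limiting path is continuous and the limiting time almost surely finite. Everything else — the reduction to the autonomous system and the fluid estimates — is routine martingale bookkeeping; the genuine point is the coupling of the random killing time of $X_3^N$ with the essentially deterministic growth of $X_4^N$ via $\kappa_4\int_0^{\widetilde\tau_N^2}\widetilde X_4^N(t{-})\diff t\sim\mathrm{Exp}(1)$.
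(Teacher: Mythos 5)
Your proposal is correct and follows essentially the same route as the paper: couple $(X_2^N,X_4^N)$ up to the first point of ${\cal P}_0$ with the autonomous pair driven only by ${\cal P}_3$ and ${\cal P}_5$, establish the fluid limit $(y,v{+}\kappa_3 y s)$ on the timescale $s/\sqrt N$, and represent the killing time of the third coordinate through the identity $\kappa_4\int_0^{\tau}\widetilde X_4^N(s{-})\diff s\steq{dist}E$ with $E$ exponential of parameter $1$, which after the change of variables yields exactly the law \eqref{Hy}. The paper's proof is the same argument written with the auxiliary process $(Y_2^N,Y_4^N)$ and the equation $\kappa_4\int_0^{H_N}Y_4^N(s)\diff s{=}E_1$; your treatment of the composition/tightness step is if anything slightly more explicit than the paper's.
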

\begin{proof}
  Let $(Y_2^N(t),Y_4^N(t))$ be the solution of the SDEs
  \[
\begin{cases}
\diff Y_2^N(t)&{=}{-}{\cal P}_{3}\left(\left(0,\kappa_{3}Y_{2}^N(t{-})\right),\diff t\right),\\
\diff Y_4^N(t)&={\cal P}_3((0,\kappa_3Y_2^N(t{-})),\diff t){-}{\cal P}_{5}((0,\kappa_{5}Y_4^N(t{-})),\diff t),
\end{cases}
\]
with the initial condition $(Y_2^N(0),Y_4^N(0)){=}(N,v_N)$. Standard stochastic calculus as in Section~\ref{TN} gives  the convergence in distribution as processes for the uniform topology
\begin{equation}\label{eqa2}
  \lim_{N\to+\infty}\left(\frac{Y_2^N(t/\sqrt{N})}{N},\frac{Y_4^N(t/\sqrt{N})}{\sqrt{N}}\right)
  =(y,v{+}\kappa_3yt).
\end{equation}
We define
\[
  \tau^Y_N=\inf\left\{t{>}0: \int_0^t{\cal P}_{4}\left(\left(0,\kappa_{4}Y_{4}^N(s{-})\right),\diff s\right){\ne}0\right\}.
\]
Let $E_1$ be an  exponential random variable with parameter $1$, independent of ${\cal P}_3$ and ${\cal P}_5$. It is easily seen that if $H_N$ is the solution of the relation
\begin{equation}\label{eqa1}
  \kappa_4 \int_0^{H_N} Y_4^N(s)\diff s=E_1
\end{equation}
  then the relation
  \[
  (\tau^Y_N,Y_2^N(\tau^Y_N),Y_4^N(\tau^Y_N))\steq{dist}  (H_N,Y_2^N(H_N),Y_4^N(H_N))
  \]
  holds. A change of variable in Relation~\eqref{eqa1} gives the identity
  \[
  \kappa_4 \int_0^{\sqrt{N}H_N} \frac{Y_4^N(s/\sqrt{N})}{\sqrt{N}}\diff s=E_1.
  \]
  Relation~\eqref{eqa2} shows that the sequence $(\sqrt{N}H_N)$ is tight and also that any of its limiting points $H$ satisfies the relation
  \[
 \kappa_4\left(vH+\frac{\kappa_3}{2}yH^2\right)\steq{dist} E_1,
  \]
  and therefore the convergence in distribution of this sequence to $H_{y,v}$.  The relation
\begin{multline*}
  \left(\sqrt{N}H_N,\frac{Y_2^N(H_N)}{N},\frac{Y_4^N(H_N)}{\sqrt{N}}\right)\\
  =  \left(\sqrt{N}H_N,\frac{Y_2^N({\sqrt{N}H_N}/{\sqrt{N}})}{N},\frac{Y_4^N({\sqrt{N}H_N}/{\sqrt{N}})}{\sqrt{N}}\right)
\end{multline*}
and  the convergence~\eqref{eqa2} show that, when $N$ goes to infinity these random variables converge in distribution to the right-hand side of Relation~\eqref{CVj}. 

Until time $t_1{\wedge}\tau_N^2$, it is easy to see that $(X_2^N(t), X_4^N(t)){\steq{dist}}(Y_2^N(t), Y_4^N(t))$. The proof is concluded by noting that 
\[
	\lim_{N\rightarrow +\infty} \P\left( t_1< \tau_N^Y\right) = 0,
\] 
since $t_1$ is a exponential random variable with parameter $\kappa_0$, independent of $\cal{P}_3$ and $\cal{P}_5$. 
\end{proof}
We complement the last proposition with a technical corollary which describes the time evolution on the  time interval $[0,\tau_2^N)$ of the process $(X_2^N(t),X_4^N(t))$. It is used in the proof of the convergence for the $M_1$-topology of Section~\ref{NTS}.
\begin{corollary}\label{Corol1}
With the notations and Assumptions of Proposition~\ref{Hconv}, if $T{>}0$, for the convergence in distribution
  \[
  \lim_{N\to+\infty}\left(\frac{X_2^N(t/\sqrt{N})}{N},\frac{X_4^N(t/\sqrt{N})}{\sqrt{N}}, t{\le}\sqrt{N}\tau_2^N\right)=
  ((y,v{+}\kappa_3 y t),  t{\le}H_{y,v}).
  \]
\end{corollary}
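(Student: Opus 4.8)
The plan is to refine Proposition~\ref{Hconv}, which identifies the limit of the triple at time $\tau_2^N$, into a statement about the whole trajectory of $(X_2^N,X_4^N)$ on $[0,\tau_2^N)$, reusing the construction of its proof. Recall from there the autonomous system $(Y_2^N(t),Y_4^N(t))$, driven only by ${\cal P}_3$ and ${\cal P}_5$, and the stopping time $\tau_N^Y$; recall also the facts established there: in the natural coupling, on $[0,t_1{\wedge}\tau_2^N)$ one has $(X_2^N(t),X_4^N(t))\steq{dist}(Y_2^N(t),Y_4^N(t))$, the time $\tau_2^N$ coincides with $\tau_N^Y$ on $\{t_1{>}\tau_N^Y\}$, and $\P(t_1{<}\tau_N^Y)\to0$. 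It is therefore enough to prove the convergence with $(X_2^N,X_4^N,\tau_2^N)$ replaced by $(Y_2^N,Y_4^N,\tau_N^Y)$; the object ``$(\,\cdot\,,t{\le}\sqrt N\tau_2^N)$'' of the statement is understood as the pair formed by the stopped processes $(\overline Y_2^N((t{\wedge}\sqrt N\tau_N^Y)/\sqrt N),\overline Y_4^N((t{\wedge}\sqrt N\tau_N^Y)/\sqrt N))_{t\ge0}$ and the lifetime $\sqrt N\tau_N^Y$, with $\overline Y_2^N{=}Y_2^N/N$ and $\overline Y_4^N{=}Y_4^N/\sqrt N$.

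Since ${\cal P}_4$ is independent of $({\cal P}_3,{\cal P}_5)$, it is independent of the whole path $(Y_2^N(t),Y_4^N(t))_{t\ge0}$; and $\tau_N^Y$ is the first point of ${\cal P}_4$ falling below the predictable curve $\kappa_4Y_4^N(s{-})$. Hence, conditionally on the path $(Y_2^N,Y_4^N)$, the time $\tau_N^Y$ is distributed as $H_N$ defined by Relation~\eqref{eqa1}, $\kappa_4\int_0^{H_N}Y_4^N(s)\diff s=E_1$, for an $\mathrm{Exp}(1)$ variable $E_1$ independent of $(Y_2^N,Y_4^N)$, which gives the joint identity in distribution
\[
\bigl((Y_2^N(t),Y_4^N(t))_{t\ge0},\ \tau_N^Y\bigr)\steq{dist}\bigl((Y_2^N(t),Y_4^N(t))_{t\ge0},\ H_N\bigr).
\]
The change of variable $s\mapsto s/\sqrt N$ recasts the defining relation of $H_N$ as $\kappa_4\int_0^{\sqrt N H_N}\overline Y_4^N(s/\sqrt N)\diff s=E_1$. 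Now I invoke the uniform convergence~\eqref{eqa2}: $(\overline Y_2^N(t/\sqrt N),\overline Y_4^N(t/\sqrt N))$ converges, locally uniformly, to $(y,v{+}\kappa_3yt)$. The primitive $t\mapsto\kappa_4(vt{+}\tfrac{\kappa_3}{2}yt^2)$ of $\kappa_4(v{+}\kappa_3yt)$ is, because $y{>}0$, a continuous strictly increasing bijection of $\R_+$, so the first-passage functional is continuous at this limiting curve; therefore $\sqrt N H_N$ converges in distribution, jointly with the path, to the variable $H_{y,v}$ characterized by $\kappa_4(vH_{y,v}{+}\tfrac{\kappa_3}{2}yH_{y,v}^2)=E_1$, that is, by Relation~\eqref{Hy}. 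By Skorokhod's representation theorem I realize all these random elements on a common space so that $(\overline Y_2^N(\cdot/\sqrt N),\overline Y_4^N(\cdot/\sqrt N))\to(y,v{+}\kappa_3y\,\cdot)$ locally uniformly and $\sqrt N H_N\to H_{y,v}$ almost surely.

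Since the limit curve is continuous, composition with the times $t{\wedge}\sqrt N H_N\to t{\wedge}H_{y,v}$ preserves local uniform convergence, so that, for every $T{>}0$, almost surely
\[
\lim_{N\to+\infty}\ \sup_{t\le T}\,\left\|\bigl(\overline Y_2^N,\overline Y_4^N\bigr)\bigl((t{\wedge}\sqrt N H_N)/\sqrt N\bigr)-\bigl(y,\ v{+}\kappa_3y(t{\wedge}H_{y,v})\bigr)\right\|=0,
\]
together with $\sqrt N H_N\to H_{y,v}$. Transporting this convergence back through the above identity in distribution, and then through the coupling with $(X_2^N,X_4^N,\tau_2^N)$ --- valid off the event $\{t_1{<}\tau_N^Y\}$, whose probability tends to $0$ --- yields the statement. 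The only delicate point is the joint convergence of the hitting time $\sqrt N H_N$ with the path, i.e. the continuity of the first-passage functional at the limiting curve; this is exactly where the strict monotonicity of $s\mapsto v{+}\kappa_3ys$ enters, and the degenerate case $v{=}0$, where the curve starts at $0$, is still covered since $E_1{>}0$ almost surely and the curve is positive on $(0,{+}\infty)$. All the remaining ingredients are the bookkeeping already performed in the proof of Proposition~\ref{Hconv}.
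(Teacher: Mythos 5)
Your argument is correct and follows essentially the same route as the paper: reduce to the auxiliary system $(Y_2^N,Y_4^N)$ via the coupling of the proof of Proposition~\ref{Hconv}, and then upgrade the uniform convergence~\eqref{eqa2} to a joint convergence of the rescaled path together with the rescaled hitting time $\sqrt{N}\tau_N^Y$. The paper states these two ingredients without elaboration, and your write-up simply supplies the details (independence of ${\cal P}_4$ from the path, continuity of the first-passage functional at the strictly increasing limit curve, Skorohod representation) that justify them.
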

\begin{proof}
  This is a consequence of a) the coupling  of the proof of   Proposition~\ref{Hconv}, the relation $(X_2^N(t),X_4^N(t)){=}(Y_2^N(t),Y_4^N(t))$ holds for $t{<}\tau_2^N\wedge t_1$,  and b) that Relation~\eqref{eqa2} can be strengthened  as
  \[
    \lim_{N\to+\infty}\left(\left(\frac{Y_2^N(t/\sqrt{N})}{N},\frac{Y_4^N(t/\sqrt{N})}{\sqrt{N}}\right), \sqrt{N}\tau_2^N\right)
  =((y,v{+}\kappa_3yt), H_{y,v}).
  \]
\end{proof}
We now return to the investigation of the asymptotic behavior of
\[
 \left(\frac{X_2^N(t)}{N},\frac{X_4^N(t)}{\sqrt{N}}\right),
\]
when the initial state is such that $(X_2^N(0),X_4^N(0)){=}(y_N,v_N)$ and Relation~\ref{InitIV} holds, and $X_1^N(0)$, $X_3^N(0){\in}\{0,1\}$. We first show that, up to a time change, there is indeed a convergence in distribution for the $J_1$-topology. See Proposition~\ref{PropJ1}. Without a time change, there is a convergence in distribution but for weaker topologies, the $M_1$-topology and the $S$-topology. See the discussion in Section~\ref{NTS}. 

\subsection{Convergence with a Random Time Change}\label{TS}
The time change considered in this section consists in removing the instants $t$ of step~(3) of Definition~\ref{CycDef}, i.e. when $X_3(t){\ne}0$. We introduce, for $t{\ge}0$, 
\[
L_0^N(t)=\int_0^t \ind{X_3^N(s)=0}\diff s \quad\text{and}\quad  \ell_0^N(t)=\inf\{s{\ge}0: L_0^N(s) > t\}. 
\]
\begin{figure}[ht]
\scalebox{0.7}{\includegraphics{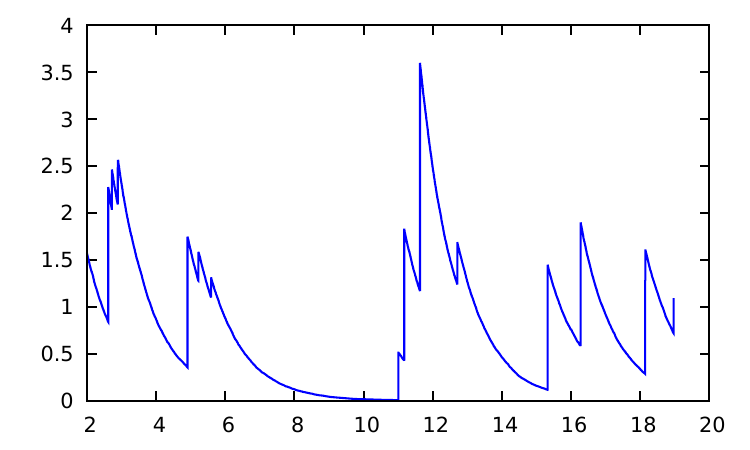}}
\put(-260,120){$\frac{X_4^N}{\sqrt{N}}$}
\put(-5,10){$t$}
\caption{CRN with Four Nodes.\\ Simulation: A snapshot of $(X_4^N(t)/\sqrt{N})$.\\Initial state $(0,N,0,0)$, $\kappa_i{=}1$, $i{=}0,$\ldots,$5$ and $N{=}9{\cdot}10^6$.}\label{fig4-1}
\end{figure}
\begin{proposition}\label{PropJ1}
  If  $X_N(0){=}(0,y_N,1,v_N)$ satisfy Relation~\eqref{InitIV}, then for the convergence in distribution for the $J_1$-Skorohod topology,
  \[
  \lim_{N\to+\infty} \left(\frac{X_2^N(\ell_0^N(t))}{N},\frac{X_4^N(\ell_0^N(t))}{\sqrt{N}}\right)=(y,V_y(t)),
  \]
  where $(V_y(t))$ is the Markov process on $\R_+$ whose infinitesimal generator ${\cal A}_y$ is given by, for $f{\in}{\cal C}_c^1(\R_+)$ and $x{\in}\R_+$, 
  \[
  {\cal A}_y(f)(x) = -\kappa_5 x f'(x)+\kappa_0\int_0^{+\infty}\left(f\left(\sqrt{x^2{+}2\frac{\kappa_3}{\kappa_4}ys}\right){-}f(x)\right)e^{-s}\diff s
\]
\end{proposition}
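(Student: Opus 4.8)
The plan is to turn the heuristic cycle decomposition of Section~\ref{CycDef} into a rigorous statement on a bounded horizon, to analyse one cycle with Proposition~\ref{Hconv} and Corollary~\ref{Corol1}, and to concatenate cycles by induction using the strong Markov property at the end of each phase~(3). First I would control the cycle structure: if $(t_k^N)$ are the points of ${\cal P}_0((0,\kappa_0],\diff t)$, one shows that, for any fixed $K$, with probability tending to $1$ the following holds up to $t_K^N$: no point of ${\cal P}_0$ falls during a phase~(2) or~(3) (each has length $O(1/\sqrt N)$ and there are finitely many of them), and the first coordinate never reaches $2$ (from $X_1^N{=}1$ it returns to $0$ at rate $\kappa_2X_2^N$, of order $N$). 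Hence between consecutive cycles the state is exactly $(0,X_2^N,0,X_4^N)$, phase~(2) has length $O(1/N)$, and phase~(3) has length $\tau_N^2=O(1/\sqrt N)$. In particular the total Lebesgue measure of $\{t:X_3^N(t)\neq0\}$ on a bounded interval is $O(1/\sqrt N)$, so $0\le\ell_0^N(t)-t\to0$ uniformly on $[0,T]$ in probability: the time change is asymptotically the identity, and it collapses the (vanishing) phase~(3) intervals to isolated points.

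For a single cycle started from $(0,y_N',0,v_N')$ with $y_N'/N\to y$ and $v_N'/\sqrt N\to v$: on phase~(1) only $X_4^N$ moves, through the linear death SDE~\eqref{SDEA}, and a standard fluid estimate (second moments of a linear death process, scaled by $\sqrt N$, with $O(\sqrt N)$ jumps) gives $X_4^N(t)/\sqrt N\to ve^{-\kappa_5t}$ uniformly on compacts, the phase lasting an exponential time of parameter $\kappa_0$; phase~(2) changes nothing at the relevant orders; and phase~(3), once collapsed by $\ell_0^N$, is exactly the content of Proposition~\ref{Hconv} and Corollary~\ref{Corol1}: $X_2^N$ changes by $o(N)$ while $X_4^N/\sqrt N$ makes a single jump from the value $v'$ reached at the end of phase~(1) to $v'+\kappa_3yH_{y,v'}=\sqrt{(v')^2+2(\kappa_3/\kappa_4)yE}$ with $E$ exponential of parameter $1$, these marks being asymptotically i.i.d.\ across cycles and independent of ${\cal P}_0$. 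Chaining these over the cycles on $[0,\ell_0^N(T)]$ by the strong Markov property at each phase~(3) endpoint — noting that $X_2^N$ only increases, by $O(1)$ per cycle, during phases~(2), while its total decrease over the horizon is $O(\sqrt N)$ — yields $X_2^N(\ell_0^N(t))/N\to y$ uniformly and identifies the limit of $X_4^N(\ell_0^N(\cdot))/\sqrt N$ as the piecewise-deterministic Markov process which decays as $\dot v=-\kappa_5v$ between the points of a Poisson process of rate $\kappa_0$ and jumps $v\mapsto\sqrt{v^2+2(\kappa_3/\kappa_4)yE}$ at those points, i.e.\ the process $(V_y(t))$ with generator ${\cal A}_y$ (equivalently the AIMD process $(R_0(t))$ of Section~\ref{V0sec} with $\gamma{=}1/\kappa_5$, $\beta{=}\kappa_4/(2\kappa_3y)$ and Poisson intensity $\kappa_0$). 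The $J_1$-Skorohod convergence then follows because each jump time of the prelimit converges to the corresponding Poisson point (the time change being asymptotically the identity), each jump size converges by Proposition~\ref{Hconv}, and on the complementary intervals the decay pieces converge uniformly — which is precisely $J_1$-convergence, the limit having only these isolated jumps.

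The main difficulty is the bookkeeping of the $O(1/\sqrt N)$ and $O(1/N)$ errors over a number of cycles that is itself random: one must check that this number is tight on $[0,\ell_0^N(T)]$, that the excluded ``bad'' events (a point of ${\cal P}_0$ during a short phase, $X_1^N$ reaching $2$, and $X_2^N$ or $X_4^N$ vanishing when $v>0$) have probability $\to0$ uniformly over the cycles considered, and that the phase~(1) fluid limit for $X_4^N$ is uniform enough to be iterated; a clean way to organize this is to condition on the number of cycles being at most $K$ and let $K\to\infty$ at the end, using the i.i.d.\ structure of the cycle increments. The remaining steps are routine combinations of the one-cycle results already proved and standard stochastic-calculus estimates.
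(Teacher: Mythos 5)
Your proposal is correct and follows essentially the same route as the paper: a cycle decomposition driven by the points of ${\cal P}_0$ (which do not depend on $N$), the linear-death fluid limit for $X_4^N/\sqrt{N}$ during phase~(1), Proposition~\ref{Hconv}/Corollary~\ref{Corol1} for the asymptotic jump at the end of each cycle, induction over the cycles on the events $\{t_n{\le}T{<}t_{n+1}\}$, and $J_1$-convergence obtained from convergence of the (isolated) jump times and sizes together with uniform convergence of the decay pieces. The paper merely packages the last step more explicitly, via the elementary $J_1$-limit for sums of indicator jumps and repeated use of Skorohod's representation theorem, but the substance is the same.
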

The process $(V_y(t))$ is in  the class of AIMD processes introduced in Section~\ref{V0sec}. 
See Figure~\ref{fig4-1} for an illustration of a sample path of $(V_y(t))$. 
\begin{proof}
 The result itself is quite intuitive in view of Proposition~\ref{Hconv}. Some care is nevertheless necessary in order to deal with  formal aspects of the $J_1$-topology. The proof is direct. One of its simple ingredients is that the convergence 
\begin{equation}\label{J1simp}
  \lim_{N\to+\infty} \left(x_N\ind{a_N\le t}{+}y_N\ind{b_N\le t}\right)=\left(x\ind{a\le t}{+}y\ind{b\le t}\right)
\end{equation}
holds for the $J_1$-topology if the sequences $(x_N)$, $(y_N)$,$(a_N)$,$(b_N)$ converge respectively to $x$, $y$, $a$, $b$ with $a{\ne}b$. Extensions with more terms also hold. See Section~VI.1 of~\citet{Jacod} for example. The proof also use Skorohod's representation theorem repeatedly. See Theorem~1.8 of Ethier and Kurtz~\cite{KurtzEthier}.

We denote by $(t_n)$ the non-decreasing sequence of points of ${\cal P}_0((0,\kappa_0],\diff t)$, with the convention that $t_0{=}0$. An important point is that this sequence is not depending on the scaling parameter $N$. This property will simplify the proofs of convergence in distribution of this section. We fix $T{>}0$, as usual ${\cal D}([0,T])$ denotes the space of \cadlag functions on $[0,T]$, we will have to consider the events,  $T{\in}[t_n,t_{n+1})$, $n{\ge}1$.  

Let $(A_0^N(t))$ be the solution of SDE~\eqref{SDEA} with $A_0^N(0){=}v_N$. A  standard argument gives the convergence in distribution
\begin{equation}\label{eqZ1}
\lim_{N\to+\infty} \left(\frac{A_0^N(t)}{\sqrt{N}}, 0{\le}t{<}T\right)=\left(ve^{-\kappa_5 t}, 0{\le}t{<}T\right),
\end{equation}
for the uniform topology on ${\cal D}([0,T])$. By using Skorohod's representation theorem, see Theorem~1.8 of Ethier and Kurtz~\cite{KurtzEthier}, one can assume that there exists a probability space on which the sequence of processes $(A_0^N(t))$ are defined and the convergence~\eqref{eqZ1} holds almost surely for the uniform norm.

We use similar notations as in Definition~\ref{CycDef} 
\[
  \tau_N^1\steq{def}\inf\left\{t{>}0: X_3^N(t_1{+}t)=1\right\},
  \tau_N^2\steq{def}\inf\left\{t{>}0: X_3^N(t_1{+}\tau_N^1{+}t)=0\right\}.
  \]

On the event $\{t_1{\le} T\}$, the processes $((y_N,A_0^N(t)),t{<}t_1)$ has the same distribution as $((X_2^N(t),X_4^N(t)),t{<}t_1)$ and, almost surely
\[
\lim_{N\to+\infty}\frac{A_0^N(t_1)}{\sqrt{N}}= \overline{v} \steq{def} ve^{-\kappa_5 t_1}.
\]
then, by Proposition~\ref{Hconv}, the convergence in distribution 
\begin{multline}\label{eqH1}
  \lim_{N\to+\infty}\left(\sqrt{N}\tau_N^1,\sqrt{N}\tau_N^2,\frac{X_2^N(t_1{+}\tau_N^1{+}\tau_N^2)}{N},\frac{X_4^N(t_1{+}\tau_N^1{+}\tau_N^2)}{\sqrt{N}}\right)\\
  =  (0,H_{y,\overline{v}},y,\overline{v}{+}\kappa_3y H_{y,\overline{v}}),
\end{multline}
holds, where
\begin{equation}\label{eqH2}
V_1 \steq{def}\overline{v}{+}\kappa_3y H_{y,\overline{v}}\steq{dist} \sqrt{\overline{v}^2{+}2\frac{\kappa_3}{\kappa_4}yE_1}, 
\end{equation}
where $E_1$ is an exponentially distributed random variable with parameter $1$ by Relation~\eqref{Hy}. Using again    Skorohod's representation theorem, it can be assumed that, with a convenient probability space, the convergence~\eqref{eqH1} holds almost surely.

Let $v_1^N{=}X_4^N(t_1{+}\tau_N^1{+}\tau_N^2)$ and  $(A_1^N(t))$ the solution of~\eqref{SDEA} associated to an independent Poisson process ${\cal P}_5$ and with initial point $v_1^N$. The convergence
\[
\lim_{N\to+\infty} \left(\frac{A_1^N(t)}{\sqrt{N}}, 0{\le}t{<}T\right)=\left(V_1e^{-\kappa_5 t}, 0{\le}t{<}T\right)
\]
holds and, up to a change of probability space, it holds almost surely for the uniform topology.

Our time change $(\ell_0^N(t))$ ``removes'' the time interval $[t_1{+}\tau_N^1, t_1{+}\tau_N^1{+}\tau_N^2)$.
We define
\begin{align*}
&\left(\widetilde{X}_2^N(t),\widetilde{X}_4^N(t)\right)=
\ind{t{<}t_1{+}\tau_N^1}\left(y_N,A_0^N(t)\right)\\
    &\hspace{4cm}{+}\ind{t_1{+}\tau_N^1{\le}t}\left(X_2^N(t_1{+}\tau_N^1{+}\tau_N^2),A_1^N(t{-}t_1{-}\tau_N^1)\right)\\
&=\left(y_N,A_0^N(t)\right){+}\ind{t_1{+}\tau_N^1{\le}t}\left(X_2^N(t_1{+}\tau_N^1{+}\tau_N^2){-}y_N,A_1^N(t{-}t_1{-}\tau_N^1){-}A_0^N(t)\right).
\end{align*}
Relation~\eqref{J1simp} gives that, for the $J_1$-topology,
\[
\lim_{N\to+\infty} \left(\frac{\widetilde{X}_2^N(t)}{N},\frac{\widetilde{X}_4^N(t)}{\sqrt{N}}\right)=
(y,V_y(t))\steq{def}\left(y,\ind{t{<}t_1}ve^{-\kappa_5 t}{+}\ind{t_1{\le}t}V_1e^{-\kappa_5 (t-t_1)}\right).
\]
Note that on the event $t_1{\le}T{<}t_2$, we have
\[
\left(\frac{\widetilde{X}_2^N(t)}{N},\frac{\widetilde{X}_4^N(t)}{\sqrt{N}}\right)\steq{dist}
\left(\frac{X_2^N(\ell_0^N(t))}{N},\frac{X_4^N(\ell_0^N(t))}{\sqrt{N}}\right)
\]
if the second component of $(X_N(t))$ does not increase on $[t_1,t_1{+}\tau_N^1]$, i.e. on the event
${\cal E}_N=\left\{{\cal P}_1((0,\kappa_1){\times}[t_1,t_1{+}\tau_N^1])=0\right\}$. The probability of ${\cal E}_N$  is arbitrarily close to $1$ as $N$ gets large since $(\sqrt{N}\tau_N^1)$ converges to $0$. Hence if $\Phi$ is a bounded continuous functional on ${\cal D}([0,T])$ endowed with the $J_1$-topology, we obtain the relation
\begin{multline*}
  \lim_{N\to+\infty} \E\left(\Phi\left(\left(\frac{X_2^N(\ell_0^N(t))}{N},\frac{X_4^N(\ell_0^N(t))}{\sqrt{N}}\right)\right)\ind{t_1{\le}T{<}t_2}\right)
\\=  \E\left(\Phi\left(\left(y,V_y(t)\right)\right)\ind{t_1{\le}T{<}t_2}\right).
\end{multline*}
note that $V_y(t_1{-}){=}\overline{v}$ and
\[
V_y(t_1){-}V_y(t_1{-}){=}\sqrt{V_y(t_1{-})^2{+}2\frac{\kappa_3}{\kappa_4}yE_1}{-}V_y(t_1{-}),
\]
by Relation~\eqref{eqH2}. We conclude that on the event $\{t_1{\le}T{<}t_2\}$, the processes
\[
\left(\frac{X_2^N(\ell_0^N(t))}{N},\frac{X_4^N(\ell_0^N(t))}{\sqrt{N}}, t\in [0, T]\right)
\]
converge in distribution for the $J_1$-topology to $(V_y(t))$, which can be expressed as the solution of the SDE~\eqref{AIMDeq0}, with $\alpha{=}\kappa_0$, $\beta{=}\kappa_4/(2\kappa_3 y)$ and $\gamma{=}1/\kappa_5$. Recall that the points $(t_n)$ do not depend on $N$.

It is straightforward, by induction, to extend this result. For any $n$, the relation
\begin{multline*}
  \lim_{N\to+\infty} \E\left(\Phi\left(\left(\frac{X_2^N(\ell_0^N(t))}{N},\frac{X_4^N(\ell_0^N(t))}{\sqrt{N}}\right)\right)\ind{t_n{\le}T{<}t_{n+1}}\right)
\\=  \E\left(\Phi\left(\left(y,V_y(t)\right)\right)\ind{t_n{\le}T{<}t_{n+1}}\right),
\end{multline*}
holds. 

With the martingale problem formulation associated to the SDE~\eqref{AIMDeq0}, it is not difficult to see that $({V}_y(t))$ is a Markov process with infinitesimal generator ${\cal A}_y$. See Section~4.4 of~\citet{KurtzEthier}. The proposition is proved. 
\end{proof}

\subsection{Convergence on the Normal Timescale}\label{NTS}
We can now state  convergence results for the processes $(X_2^N(t)/N,X_4^N(t)/\sqrt{N})$ without a time-change. With the same notations as in the proof of the last proposition, one has to consider, for example, time intervals of the type  $[t_1{+}\tau_N^1, t_1{+}\tau_N^1{+}\tau_N^2)$. By Proposition~\ref{Hconv}, its width $\tau_N^2$ is converging in distribution to $0$, whereas the number of jumps on it is of the order of $\sqrt{N}$. Because of that, the $J_1$-topology is not a convenient topology for a convergence result.

  We will use another Skorohod topology, the $M_1$-topology, which allows the accumulation of jumps in a small neighborhood.  We will also consider a non-Skorohod topology, the $S$-topology. Both topologies have their pros and cons, see~\cite{Jakubowski}. For the $S$-topology, tightness criteria are somewhat simpler and the sum (of processes) is a continuous mapping for this topology, which is not the case for the $M_1$ and $J_1$-topologies. Unfortunately this is not a metrisable topology and the $S$ convergence does not imply the convergence  in distribution of the finite marginals. For both topologies, the integration of processes is a continuous functional, which is a key property to investigate the asymptotic integral equations verified by the possible limiting points.

Chapter~12 of~\citet{Whitt} contains an in-depth  presentation of the $M_1$-Skorohod topology with many details (and other Skorohod topologies) and~\citet{Jakubowski} for  the $S$-topology.  See also~\citet{Kern} for a quick and nice introduction to the intricacies of Skorohod topologies.
  
\begin{proposition}\label{PropM1}
If  $X_N(0){=}(0,y_N,1,v_N)$ satisfy Relation~\eqref{InitIV}, then for the convergence in distribution for the $M_1$-Skorohod topology and the $S$-topology,
  \[
  \lim_{N\to+\infty} \left(\frac{X_2^N(t)}{N},\frac{X_4^N(t)}{\sqrt{N}}\right)=(y,V_y(t)),
  \]
  where $(V_y(t))$ is the Markov process on $\R_+$ defined in Proposition~\ref{PropJ1}. 
\end{proposition}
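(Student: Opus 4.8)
The plan is to bootstrap from the $J_1$-convergence obtained after the time change in Proposition~\ref{PropJ1}, reinstating the step~(3) intervals that were removed there, and to check that doing so only inserts, near each jump time of the limit, a monotone ramp of vanishing width — which is exactly the kind of perturbation the $M_1$-topology (and the $S$-topology) tolerate. As in the proof of Proposition~\ref{PropJ1}, I would fix $T{>}0$, denote by $(t_n)$ the points of ${\cal P}_0((0,\kappa_0],\diff t)$ (which do not depend on $N$, and of which only finitely many lie in $[0,T]$ almost surely), and argue separately on each event $\{t_n{\le}T{<}t_{n+1}\}$. Within one cycle the process is assembled from three pieces: the pure-decay phases governed by SDE~\eqref{SDEA}, on which $(X_4^N(t)/\sqrt N)$ converges uniformly to an exponential arc $V_n e^{-\kappa_5(\cdot)}$ and $(X_2^N(t)/N)$ is frozen at $y$; the step~(2) transition of width $O(1/N)$; and the step~(3) interval $[t_n{+}\tau_N^1,t_n{+}\tau_N^1{+}\tau_N^2)$, whose width $\tau_N^2$ is $O(1/\sqrt N)$ and during which $(X_4^N(t)/\sqrt N)$ builds up. Skorohod's representation theorem is used, cycle after cycle, to make all these convergences almost sure, exactly as in Proposition~\ref{PropJ1}.

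The heart of the matter is the behaviour on the step~(3) windows, and that is precisely what Corollary~\ref{Corol1} supplies: after rescaling time by $\sqrt N$ inside the window, $(X_4^N/\sqrt N)$ converges uniformly to the affine arc $s{\mapsto}\bar v_n{+}\kappa_3 y s$, $s{\le}H_{y,\bar v_n}$, which is non-decreasing and joins $V_y(t_n{-}){=}\bar v_n$ to $V_y(t_n){=}V_n$; meanwhile $(X_2^N/N)$ moves by only $O(1/\sqrt N)$ across the window (it increases by $O(1/N)$ during step~(2), and during step~(3) its decrease rate is $\kappa_3X_2^NX_3^N{=}O(N)$ over a time $O(1/\sqrt N)$), so that in fact $\sup_{t\le T}|X_2^N(t)/N-y|$ tends to $0$ in probability. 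Consequently, for every $\eta{>}0$, with probability tending to one the first coordinate $(X_4^N/\sqrt N)$ remains in $[\bar v_n{-}\eta,V_n{+}\eta]$ on each step~(3) window and its endpoint values converge to $\bar v_n$ and $V_n$, while off the (finitely many, $N$-independent) points $t_n$ there is uniform convergence to $(y,V_y(t))$. This says exactly that the Skorohod $M_1$ oscillation modulus of $(X_2^N/N,X_4^N/\sqrt N)$ at the times $t_n$ vanishes with $\delta$, uniformly in $N$ large. Since the first coordinate converges uniformly to a constant, the product $M_1$-convergence reduces to that of the second coordinate, and the characterization of $M_1$-convergence by uniform convergence away from the jumps together with vanishing of the oscillation modulus at the jumps (Chapter~12 of~\citet{Whitt}) yields the convergence in distribution to $(y,V_y(t))$, with jump times and sizes matching Proposition~\ref{PropJ1}.

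For the $S$-topology I would treat tightness and identification separately. Tightness follows from Jakubowski's criterion: $\sup_{t\le T}\|(X_2^N(t)/N,X_4^N(t)/\sqrt N)\|$ is tight because $(X_4^N/\sqrt N)$ only decays between the finitely many windows and picks up there an increment converging to $\kappa_3 y H_{y,\bar v_n}$ while $(X_2^N/N)$ stays near $y$; and the number of up-crossings of any fixed level pair $[a,b]$ by each coordinate is bounded, since within a cycle the trajectory is essentially one decreasing arc followed by one increasing ramp — the microscopic $\pm1/\sqrt N$ fluctuations inside a window are eventually smaller than $b-a$ and create no extra crossing — so the total count is $O(1)$ over $[0,T]$. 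For identification, any subsequential $S$-limit must coincide with $(y,V_y)$: $S$-convergence forces convergence of the integral functionals $\bigl(\int_0^t X_4^N(s)/\sqrt N\,\diff s\bigr)$ and pointwise convergence of $(X_2^N/N,X_4^N/\sqrt N)$ at all but countably many $t$, and on the complement of $\{t_n\}$ these limits already agree with $(y,V_y(t))$ by the $M_1$-convergence just established; right-continuity of $V_y$ then pins the limit down. This gives the convergence in distribution for the $S$-topology as well.

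The main obstacle is the handling of step~(3) within the $M_1$ formalism: one must verify that the genuine, non-monotone $\pm1$ jumps of $X_4^N$ inside a window of width $O(1/\sqrt N)$, together with the simultaneous small drift of $X_2^N$, do not spoil the $M_1$ oscillation bound — which is exactly why Corollary~\ref{Corol1} is stated as a functional (uniform) limit on the rescaled window, not merely as a limit of endpoint values. A secondary difficulty is the identification of the limit for the $S$-topology, which, being disconnected from finite-dimensional convergence, has to be recovered through continuous functionals (integrals) and the $M_1$ limit already in hand.
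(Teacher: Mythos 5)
Your proposal is correct and follows essentially the same route as the paper: argue cycle by cycle on the events $\{t_n{\le}T{<}t_{n+1}\}$, use Corollary~\ref{Corol1} to control the step~(3) window, compare the path there with a monotone ramp joining $V_y(t_n{-})$ to $V_y(t_n)$ (the paper does this by explicitly freezing $X_2^N$ and linearly interpolating $X_4^N$ and invoking Whitt's example~(3.1), which is the same as your oscillation-modulus verification), and then obtain the $S$-convergence from Jakubowski's tightness criterion together with the finite-marginal identification already supplied by the $M_1$ limit. No gaps worth flagging.
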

\begin{proof}
Recall that $(t_n)$ is the non-decreasing sequence of points of ${\cal P}_0((0,\kappa_0],\diff t)$, as in the proof of Proposition~\ref{PropJ1} they can be considered as ``fixed'' since they do not depend on $N$. We proceed as in the proof of Proposition~\ref{PropJ1} by working on the events $\{t_n{\le}T{<}t_{n+1}\}$. As before, it is sufficient to consider $n{=}1$. As before, the Skorohod representation theorem is used, implicitly this time, with a convenient probability space, \ldots

We begin with the $M_1$-topology. 
 Compared to the proof of the last proposition, the same type of arguments are used.  We only have to ``insert'' the time interval $[t_1{+}\tau_N^1, t_1{+}\tau_N^1{+}\tau_N^2)$ which will give the jump at time $t_1$. Recall that the $M_1$-topology is weaker than the $J_1$-topology.

    We introduce the processes $(\widetilde{X}_2^N(t),\widetilde{X}_4^N(t))$ defined by
    \[
    (\widetilde{X}_2^N(t),\widetilde{X}_4^N(t))=    ({X}_2^N(t),{X}_4^N(t)), \quad t{\not\in}(t_1{+}\tau_N^1, t_1{+}\tau_N^1{+}\tau_N^2]
    \]
    and, for $t{\le}T$,
    \[
    \begin{cases}
      \widetilde{X}_2^N(t){=}X_2^N(t_1{+}\tau_N^1) \text{ for } t{\in} (t_1{+}\tau_N^1, t_1{+}\tau_N^1{+}\tau_N^2)\\
      \widetilde{X}_2^N(t_1{+}\tau_N^1{+}\tau_N^2){=}X_2^N(t_1{+}\tau_N^1{+}\tau_N^2)
    \end{cases}
    \]
    and
    \[
    \widetilde{X}_4^N(t)=X_4^N(t_1{+}\tau_N^1){+} \kappa_3 y N(t{-}(t_1{+}\tau_N^1))\text{  for  } t{\in} (t_1{+}\tau_N^1, t_1{+}\tau_N^1{+}\tau_N^2).
    \]
By using the  notations of the last proof,  on the time interval $[t_1{+}\tau_N^1, t_1{+}\tau_N^1{+}\tau_N^2)$, $(\widetilde{X}_2^N(t))$ is kept constant equal to
  $X_2^N(t_1{+}\tau_N^1)$    and $(\widetilde{X}_4^N(t)/\sqrt{N})$ is a linear interpolation between the points
\begin{multline*}
  \frac{X_4^N(t_1{+}\tau_N^1)}{\sqrt{N}}\stackrel{\text{dist}}{\sim}\overline{v}=v\exp({-}\kappa_5 t_1)
\\  \text{ and  } \frac{X_4^N(t_1{+}\tau_N^1)}{\sqrt{N}}{+}\kappa_3 y \sqrt{N}\tau_2^N \stackrel{\text{dist}}{\sim} V_1{=}\overline{v}+\kappa_3 y H_{y, \overline{v}},
\end{multline*}
which correspond to $V_y(t_1{-})$ and $V_y(t_1)$.

By using Corollary~\ref{Corol1}, where the underlying topology is the uniform norm,  and Definition~(3.4) of~\citet{Whitt} for the distance for $M_1$ on ${\cal D}([0,T))$, on the event $\{t_1{\le}T{<}t_{2}\}$, the two processes $(\widetilde{X}_2^N(t)/N,\widetilde{X}_4^N(t)/\sqrt{N})$ and $(X_2^N(t)/N,X_4^N(t)/\sqrt{N})$ are arbitrarily close as $N$ goes to infinity.

  The classical example~(3.1) p.~80 of~\citet{Whitt} and its parametrization~(3.4) show that  on the event $\{t_1{\le}T{<}t_{2}\}$, the sequence of processes
  \[
  \left(\frac{\widetilde{X}_2^N(t)}{N},\frac{\widetilde{X}_4^N(t)}{\sqrt{N}}\right)
  \]
  converges in distribution for the $M_1$-topology to the process $(y,V_y(t))$.

  For the $S$-topology, for $x{\in}{\cal D}((0,T))$ and $\eta{>}0$, $N_{\eta}(x)$ is the number of oscillations of order  $\eta$ for $(x(t))$ on $[0,T]$, i.e. for $k{\ge}1$, $N_{\eta}(x){\ge}k$ holds if there exists $t_1{<}t_2{<}\cdots{<}t_{2k}$, such that $|x(t_{2i}){-}x(t_{2i-1})|{>}\eta$ for $i{\in}\{1,\ldots,k\}$.

  Proposition~(3.1)~(iii) of~\citet{Jakubowski} shows that if, for any $\eta{>}0$, the sequence  of random variables 
  \[
  \left(\left\|\frac{X_2^N}{N}\right\|_T,\left\|\frac{X_4^N}{\sqrt{N}}\right\|_T, \frac{X_2^N(t)}{N}, N_\eta\left(\frac{X_2^N(t)}{N}\right),N_\eta\left(\frac{X_4^N(t)}{\sqrt{N}}\right)\right)
\]
is tight, then the sequence of processes  $(X_2^N(t)/N,X_4^N(t)/\sqrt{N})$ is tight for the $S$-topology. This is seen by the same arguments as in the  proof of Proposition~\ref{PropJ1} for the  time intervals $[0,t_1)$, $[t_1,t_1{+}\tau_1^N)$ and, with Corollary~\ref{Corol1} for the time interval $[t_1{+}\tau_1^N,t_1{+}\tau_1^N{+}\tau_2^N)$.

Since, by the $M_1$-convergence,  the finite marginals of these processes converge in distribution to the corresponding  finite marginals of $(y,V_y(t))$,   Theorem~3.11 of~\citet{Jakubowski} gives the convergence in distribution for the $S$-topology to  $(y,V_y(t))$

The proposition is proved. 

\end{proof}

\section{A Stochastic Averaging Principle}\label{FNII}
\begin{figure}[htpb]
        \begin{subfigure}[b]{0.45\textwidth}
                \centering
                \scalebox{2}{\includegraphics[width=.45\textwidth]{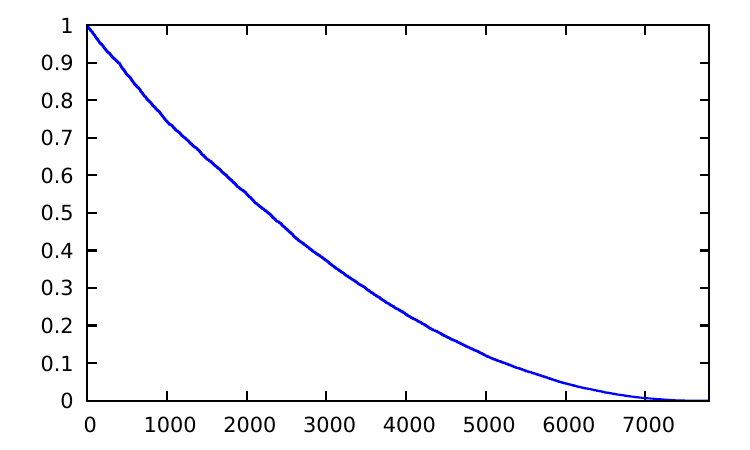}}
                \caption{$(X_2^N(t)/N)$}
        \end{subfigure} 
        ~
        \begin{subfigure}[b]{0.45\textwidth}
                \centering      
                \scalebox{2}{\includegraphics[width=.45\textwidth]{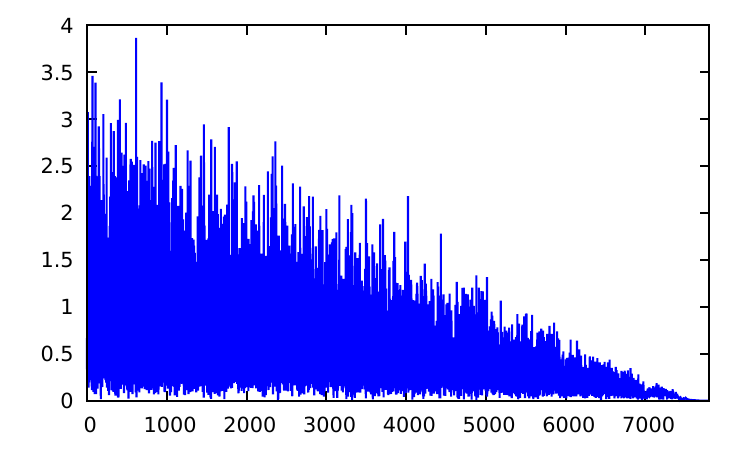}}
                \caption{$(X_4^N(t)/\sqrt{N})$}
        \end{subfigure}
        \caption{CRN with Four Nodes. Full  simulation.\\  Initial state $(0,N,0,0)$, $\kappa_i{=}1$, $i{=}0,$\ldots,$5$ and $N{=}9{\cdot}10^6$.}     
        \label{FigAV}
\end{figure}

Starting from the state $(0,y_N,1,v_N)$ with $(y_N,v_N){\sim}(yN,v\sqrt{N})$, Proposition~\ref{Hconv} shows that, when the value of $(X_3^N(t))$ switches back to $0$,  the process  $(X_4^N(t))$ has increased by an amount of the order of $\sqrt{N}$. This implies that the process  $(X_2^N(t))$ has decreased by an amount also of the order of $\sqrt{N}$ at this instant. Recall that starting from a state for which the third coordinate  is  $0$, it becomes $1$ again after a duration of time stochastically lower bounded by an exponential distribution with parameter $\kappa_0$.  Hence, to have a decay of the order of $N$ for $(X_2^N(t))$, one needs a number of such cycles of the order of $\sqrt{N}$ and, therefore, this suggests that the ``correct'' timescale to observe a decay of $(X_2^N(t))$ is $(\sqrt{N}t)$. This is the main result of this section. 

We introduce $(U_N(t)){=}(U_1^N(t),U_2^N(t),U_3^N(t),U_4^N(t))$, with $(U_1^N(t)){=}(0)$, and the other coordinates are the solution of the SDE
\begin{equation}\label{SDEm42}
\begin{cases}
\diff U_2^N(t)&{=}\, {-}{\cal P}_{3}\left(\left(0,\kappa_{3}U_{2}^NU_{3}^N(t{-})\right),\diff t\right),\\
\diff U_3^N(t)&={\cal P}_{0}\left(\left(0,\kappa_{0}\right),\diff t\right){-}{\cal P}_{4}\left(\left(0,\kappa_{4}U_{3}^NU_{4}^N(t{-})\right),\diff t\right),\\
\diff U_4^N(t)&={\cal P}_3((0,\kappa_3U_2^NU_3^N(t{-})),\diff t){-}{\cal P}_{5}((0,\kappa_{5}U_4^N(t{-})),\diff t),
\end{cases}
\end{equation}
with $(U_N(0)){=}(0,y_N,1,v_N)$ and the sequence $(y_N,v_N)$ satisfies Relation~\eqref{InitIV}.

We first give a heuristic motivation for the introduction of $(U_N(t))$. The difference between  the processes  $(X_N(t))$ and $(U_N(t))$ lies in the fact that when there is a jump of ${\cal P}_{0}\left(\left(0,\kappa_{0}\right),\diff t\right)$, for $U_N$ it is transferred right away to the coordinate $U_3^N$ and for $X_N$ it goes to $X_1^N$ and then to $X_3^N$ at a rate $\kappa_2X_2^N(t)$ at time $t$. If $X_2^N(t)$ is of the order of $N$,  the difference between $(X_N(t))$ and $(U_N(t))$ holds on a time interval whose duration is of the order of $1/N$ and the probability that there is another event that will change the coordinates during that time is of the order of  $1/N$. Hence on a time interval $[0,\sqrt{N}T]$, given that the number of jumps of  ${\cal P}_{0}$ is of the order of $\sqrt{N}$, the two processes should be ``close''. Proposition~\ref{U=Z} at the end of this section gives a formal assessment of this non-rigorous picture. The limit results for $(X_N(t))$ will be a consequence of the limit results obtained for $(U_N(t))$. 

From now on, we investigate the asymptotic behavior of $(U_N(t))$.  The general strategy is of first considering, via  a time change, the time evolution of the process when the third coordinate is above $1$
\subsection{Time Evolution when ${\mathbf U_3^N}$ is non-zero}
We introduce the local time of the excursions of the process $(U_3^N(t))$ above $1$. 
\begin{definition}[Time Change]\label{elldef}
For $t{\ge}0$,
\[
L_1^N(t)\steq{def} \int_0^t \ind{U_3^N(s){\ge}1}\diff s\quad \text{and}\quad \ell_1^N(t)\steq{def}\inf\{s{>}0: L_1^N(s){>} t\}. 
\]
\end{definition}
Before introducing the formal time change arguments of this section, to motivate the description of the process $(U_N(t))$ on the timescale $(\ell_1^N(t))$,  we give a quick presentation of the dynamics involved. 

In view of the SDEs~\eqref{SDEm42}, if $U_3^N$ is $1$ at time $t_0$,  it becomes $0$ at rate $\kappa_4U_4^N(t_0{-})$ and stays at $0$ for an exponentially  distributed amount of time with parameter $\kappa_0$.

When $(U_3^N(t))$ is $0$ on a time interval $[t_0,t_0{+}a)$, for some $a{>}0$,  the process $(U_2^N(t))$ does not change and the process $(U_4^N(t))$ can only decrease. At time $t_0$,  the $p{=}U_4^N(t_0)$  elements of $(U_4^N(t))$ can be seen as  having lifetimes $E_{\kappa_{5},1}$, \ldots, $E_{\kappa_{5},p}$. These variables are exponentially distributed with parameter $\kappa_5$.  At time $t_0{+}a$ there remain only those whose  lifetime is greater than $a$. 

  This suggests quite naturally  the following description of our system. Let ${\cal N}$ be a  Poisson marked point process  on $\R_+^2{\times}\R_+{\times}\R_+^\N$ with intensity measure $$\diff s{\otimes}\diff t{\otimes}\kappa_0\exp({-}\kappa_0a)\diff a{\otimes}Q(\diff b),$$ where $Q$ is the distribution of $(E_{\kappa_5,i})$ on $\R_+^\N$.

  The process $(Z_N(t)){=}(Z^N_2(t),Z^N_3(t),Z^N_4(t))$ is the solution of the SDE
\begin{equation}\label{SDEZ}
\begin{cases}
\diff Z_2^N(t){=}\, {-}{\cal P}_{3}\left(\left(0,\kappa_{3}Z_{2}^NZ_{3}^N(t{-})\right),\diff t\right),\\
\diff Z_3^N(t)={\cal P}_{0}\left(\left(0,\kappa_{0}\right),\diff t\right){-}\ind{Z_3^N(t{-})\geq 2}{\cal P}_{4}\left(\left(0,\kappa_{4}Z_{3}^NZ_{4}^N(t{-})\right),\diff t\right),\\
\diff Z_4^N(t)={\cal P}_3((0,\kappa_3Z_2^NZ_3^N(t{-})),\diff t){-}{\cal P}_{5}((0,\kappa_{5}Z_4^N(t{-})),\diff t),\\
 \qquad{-}\ind{Z_3^N(t{-}){=}1}\displaystyle\int_{a,b}S(Z_4^N(t{-}),a,b) {\cal N}\left(\left(0,\kappa_4Z_4^N(t{-})\right],\diff t,\diff a,\diff b\right),
\end{cases}
\end{equation}
where $S({\cdot})$ is defined by Relation~\eqref{Sdef}. It turns out that $(Z_N(t))$ has the same distribution as the  time changed process $(U_N(t))$.
\begin{proposition}\label{TC1Prop}
  For $N{\geq}1$, if $(U_N(0)){=}(Z_N(0)){=}(y_N,1,v_N)$ then
  \[
  \left(Z_N(t)\right)\steq{dist} \left(U_N(\ell_1^N(t))\right),
  \]
where $(U_N(t))$, $(Z_N(t))$, and $(\ell_1(t))$ are respectively defined by Relations~\eqref{SDEm42}, \eqref{SDEZ} and Definition~\eqref{elldef}
\end{proposition}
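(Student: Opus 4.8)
The plan is to decompose a trajectory of $(U_N(t))$ into the excursions of its third coordinate above and below the level $1$, and to verify that the time change $\ell_1^N$ is exactly the operation of deleting the excursions of $(U_3^N(t))$ at $0$, each deleted excursion being accounted for, in $(Z_N(t))$, by one jump of the point process ${\cal N}$ in~\eqref{SDEZ}.

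First I would introduce the excursion times. Since $U_3^N(0){=}1$, set $\sigma_0'{=}0$ and, for $k{\ge}1$, let $\sigma_k$ be the $k$-th instant at which $(U_3^N(t))$ reaches $0$ and $\sigma_k'$ the first point of ${\cal P}_0((0,\kappa_0],\diff t)$ after $\sigma_k$; then $[\sigma_{k-1}',\sigma_k)$ is an ``active'' interval on which $U_3^N{\ge}1$ and $[\sigma_k,\sigma_k')$ an ``idle'' interval on which $U_3^N{\equiv}0$. Since $(U_2^N(t))$ is non-increasing and, on $[0,T]$, $U_3^N(t){\le}U_3^N(0){+}{\cal P}_0((0,\kappa_0]{\times}(0,T])$, the birth rate $\kappa_3U_2^NU_3^N$ of $U_4^N$ is a.s. bounded on $[0,T]$; so $(U_N(t))$ does not explode, there are a.s. finitely many $\sigma_k$ in a finite interval, $L_1^N(t)$ of Definition~\ref{elldef} increases to $+\infty$, $\ell_1^N$ is well defined on $\R_+$, and $(U_N(\ell_1^N(t)))$ is the \cadlag process obtained from $(U_N(s))$ by erasing the idle intervals and concatenating the active ones.

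The core step is the analysis of one idle interval. Each $\sigma_k$ is a stopping time, $U_3^N(\sigma_k{-}){=}1$, $U_3^N(\sigma_k){=}0$, and, ${\cal P}_4$ being a.s. disjoint from ${\cal P}_3$ and ${\cal P}_5$, neither $U_2^N$ nor $U_4^N$ jumps at $\sigma_k$. As $(U_N(t))$ is a Markov jump process, the strong Markov property at $\sigma_k$ shows that, conditionally on ${\cal F}_{\sigma_k}$, the traces of ${\cal P}_0$ and ${\cal P}_5$ on $[\sigma_k,{+}\infty)$ are independent fresh Poisson processes; hence $\theta_k{\steq{def}}\sigma_k'{-}\sigma_k$ is exponential with parameter $\kappa_0$, the coordinate $U_2^N$ is constant on $[\sigma_k,\sigma_k')$, and $(U_4^N(t))$ runs there as the pure death process with per-capita rate $\kappa_5$ started at $U_4^N(\sigma_k{-})$, independently of $\theta_k$. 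Representing that death process through i.i.d. $\kappa_5$-exponential lifetimes $(E_i)$ (a realization of the $Q$-mark of ${\cal N}$), one obtains, jointly and conditionally independently of ${\cal F}_{\sigma_k}$, that $U_4^N(\sigma_k{-}){-}U_4^N(\sigma_k'){\steq{dist}}S(U_4^N(\sigma_k{-}),\theta_k,(E_i))$, with $S$ as in~\eqref{Sdef} and $(\theta_k,(E_i))$ distributed as the mark $\kappa_0\exp({-}\kappa_0 a)\diff a{\otimes}Q(\diff b)$ of ${\cal N}$. Collapsing $[\sigma_k,\sigma_k')$ thus turns, in the time-changed clock, the passage through the idle interval into a single jump $U_4^N\mapsto U_4^N{-}S(U_4^N,\theta_k,(E_i))$, leaving $U_2^N$ and $U_3^N$ unchanged ($U_3^N$ returning to $1$); and because the time change does not alter the clock on the active intervals, this jump is triggered at the rate $\kappa_4U_4^N$ of the ${\cal P}_4$ event that made $U_3^N$ leave the level $1$.

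To finish I would identify generators. On an active interval $(U_N(t))$ obeys the first three lines of~\eqref{SDEm42} with $U_3^N{\ge}1$, a ${\cal P}_4$-transition keeping the path active precisely when $U_3^N{\ge}2$; combined with the jump of the previous paragraph, $(U_N(\ell_1^N(t)))$ is therefore a pure-jump Markov process whose transition mechanism is, state by state, the one read off from SDE~\eqref{SDEZ}: the ${\cal P}_0$, ${\cal P}_3$, ${\cal P}_5$ terms, the ${\cal P}_4$ term restricted to $\{Z_3^N{\ge}2\}$, and the ${\cal N}$-term firing at rate $\kappa_4Z_4^N$ on $\{Z_3^N{=}1\}$ with displacement $S(Z_4^N,a,b)$. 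Since $(Z_N(t))$ is the non-explosive, countable-state Markov process with this generator and the same initial state $(y_N,1,v_N)$, uniqueness of the solution of the associated martingale problem (Chapter~4 of~\citet{KurtzEthier}) gives $(Z_N(t)){\steq{dist}}(U_N(\ell_1^N(t)))$. The main obstacle is the third paragraph: one must check, applying the strong Markov property along the whole sequence $(\sigma_k)$, that the idle durations and the downward jumps of $U_4^N$ over idle intervals are generated by a mark with exactly the joint law of ${\cal N}$ — in particular that the ${\cal P}_0$-interarrival $\theta_k$ and the residual $\kappa_5$-lifetimes of the $U_4^N$ particles are mutually independent and, given $U_N(\sigma_k)$, independent of the active-period randomness — so that the ${\cal N}$-driven representation~\eqref{SDEZ} reproduces the full law and not merely some marginals. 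The remaining points (non-explosion, $\ell_1^N$ well defined, matching the ${\cal P}_i$ terms on active intervals) are routine.
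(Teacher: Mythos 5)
Your proposal is correct and follows essentially the same route as the paper: decompose the path at the successive hitting times of $0$ by $(U_3^N(t))$, use the strong Markov property to show each idle interval has an exponential $\kappa_0$ duration during which $(U_4^N(t))$ is a pure death process representable by i.i.d. $\kappa_5$-exponential lifetimes, so that erasing the idle interval produces exactly a jump $S(U_4^N,a,b)$ with the mark law of ${\cal N}$, and then iterate over successive excursions. The paper's own proof is a sketch that concludes by induction on the return times rather than by a martingale-problem identification, but that is a presentational difference, not a different argument.
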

\begin{proof}
  We give a sketch of the proof. It is essentially a consequence of a repeated use of the strong Markov property of Poisson processes.
If $U_N(0){=}(y_N,1,v_N)$, let $\tau_U^N$ be the first instant when $(U_3^N(t))$ hits $0$, i.e. the first jump of the counting process
  \[
\left(\int_0^t\ind{U_3^N(s){=}1}{\cal P}_{4}\left(\left(0,\kappa_{4}U_{4}^N(s{-})\right),\diff s\right)\right). 
  \]

On the time interval $[0,\tau_U^N)$ $(U_N(t))$ satisfies the SDEs~\eqref{SDEZ}.
If $E_{\kappa_0}$ is the first point of $({\cal P}_{0}(\left(0,\kappa_{0}),(\tau_U^N,\tau_U^N{+}t]\right)$, $E_{\kappa_0}$ is an exponential random variable with parameter $\kappa_0$ independent  of ${\cal F}_{\tau_U^N}$, and on the time interval $[\tau_U^N, \tau_U^N{+}E_{\kappa_0})$, the coordinate $(U_4^N(\tau_U^N{+}t))$ satisfies the SDE,
    \[
    \diff A_N(t)= {-}{\cal P}_{5}((0,\kappa_{5}A_N(t{-})),\tau_U^N{+}\diff t),
    \]
with $A_N(0){=}U_4^N(\tau_U^N)$. It is easily seen that the process $(A_N(t))$ has the same distribution as
 \[
\left(\sum_{i=1}^{U_4^N(\tau_U^N)}\ind{E_{\kappa_5,i}> t}\right),
\]
where $(E_{\kappa_5,i})$ are i.i.d. exponential random variables with parameter $\kappa_5$ independent of ${\cal F}_{\tau_U^N}$. 
We have therefore that
\begin{multline*}
U_4(\ell_1^N(\tau_U^N)){-} U_4(\ell_1^N(\tau_U^N{-}))=U_4(\tau_U^N{+}E_{\kappa_0}){-}U_4(\tau_U^N)\\=\sum_{i=1}^{U_4^N(\tau_U^N)}\ind{E_{\kappa_5,i}> E_{\kappa_0}}-U_4^N(\tau_U^N)=-S(U_4^N(\tau_U^N),E_{\kappa_0},(E_{\kappa_5,i})),
\end{multline*}
where $S$ is the function defined by Relation~\eqref{Sdef}. One can proceed by induction on the successive instants of return to $1$ from $0$ of $(U_3^N(t))$. The proposition is proved. 
\end{proof}
Note that by considering the process $(Z_N(t))$, we remove  the time intervals where $(U_2^N(t)/N)$ is constant and therefor e no effect on its decay. The relevant timescale to see the decrease of $(Z_2^N(t)/N)$ is now the normal timescale $(t)$, and we will see later  that $\ell^N_1(t){=}O(\sqrt{N})$. 

\begin{definition}\label{defmN}
 The occupation measure $m_N$ of $(Z_4^N(t)/\sqrt{N})$ is
  \[
  \croc{m_N,F}=\int_0^{+\infty} F\left(s,\frac{Z_4^N(s)}{\sqrt{N}}\right)\diff s,
  \]
  for $F{\in}{\cal C}_c(\R_+^2)$.
\end{definition}

We can now state a key result of this section.
\begin{theorem}\label{TheoCVZ}
If $(Z_N(t))$, the solution of the SDEs~\eqref{SDEZ} and $\mu_Z^N$ the occupation measure of Definition~\ref{defmN} are such that $(Z_N(0)){=}(y_N,1,v_N)$ and $(y_N,v_N)$ satisfies Relation~\eqref{InitIV}, then, for the convergence in distribution,
  \[
  \lim_{N\to+\infty} ((Z_2^N(t)),\mu_Z^N)= \left((z_2(t)),\mu_Z^\infty\right),
  \]
with $(z_2(t)){=}(ye^{-\kappa_3 t})$,  and, for  any $F{\in}{\cal C}_c(\R_+^2)$,
    \[
\int_{\R_+^2} F\left(s,x\right)\mu_Z^\infty(\diff s,\diff x) =\int_{\R_+^2} F(s,\sqrt{x})\Gamma_0\left(\frac{\kappa_0}{2\kappa_5}{+}\frac{1}{2}, \frac{\kappa_4}{2\kappa_3 z_2(s)}\right)(\diff x)\diff s,
  \]
where  $\Gamma_0({\cdot},{\cdot})$ is the distribution of Definition~\ref{GamDef}.
\end{theorem}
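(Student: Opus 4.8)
The plan is to couple the dynamics of $(Z_N(t))$ with the particle system of Section~\ref{PartSec} and then apply Theorem~\ref{LimitOccApp}. First I would observe that the fourth coordinate $(Z_4^N(t))$ is, up to the slow drift of $(Z_2^N(t))$, exactly of the form of the process $(K_N(t))$ of Relation~\eqref{SDEBN}: new particles of $S_4$ are created at rate $\kappa_3 Z_2^N(t) Z_3^N(t)$, particles die at rate $\kappa_5 Z_4^N(t)$, and, crucially, at the isolated instants where $Z_3^N(t{-}){=}1$ a macroscopic fraction $S(Z_4^N(t{-}),a,b)$ of the population is wiped out, where $a$ is $\Exp(\kappa_0)$-distributed and $b$ is an i.i.d.\ $\Exp(\kappa_5)$ sequence — which is the "switch" mechanism of the particle model with $\sigma{=}\kappa_0$, $\lambda{=}\kappa_5$, $\delta{=}\kappa_4$. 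The role of $(P_N(t))$ is played by $\kappa_3 Z_2^N(t)$. The first step is therefore to verify that $(Z_3^N(t))$ stays bounded and spends a negligible amount of time above $1$ on any compact interval: since $Z_3^N$ increases by $+1$ at rate $\kappa_0$ and, when $\ge 2$, decreases at rate $\kappa_4 Z_3^N Z_4^N$ which is $O(\sqrt N)$ as soon as $Z_4^N=O(\sqrt N)$, excursions above $1$ have duration $O(1/\sqrt N)$; a simple coupling with an $M/M/\infty$-type process as in Propositions~\ref{PropMMI}–\ref{PropCoup} controls this. Consequently in the slow equations we may replace $Z_3^N(s)$ by $\ind{Z_3^N(s)=1}$ and, in occupation-measure integrals, this is asymptotically the same as integrating over all $s$ weighted by the invariant fraction of time at state~$1$.

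Next I would establish the a priori bound $\sup_N\int_0^T \E(Z_4^N(s)^2/N)\diff s<\infty$, exactly as in Step~1 of the proof of Theorem~\ref{LimitOccApp}, by integrating the SDE for $Z_4^N$ and using that the compensated death rate dominates: this gives tightness of $(\mu_Z^N)$ and the Kurtz representation $\croc{\mu_Z^\infty,F}=\int F(s,x)\gamma_s(\diff x)\diff s$. For the slow coordinate, integrating the first line of~\eqref{SDEZ} gives
\[
Z_2^N(t)=y_N + M_2^N(t) - \kappa_3\int_0^t Z_2^N(s)\ind{Z_3^N(s)\ge 1}\diff s,
\]
and, after dividing by $N$, the martingale term vanishes (its bracket is $O(1/N)$ times the same integral, controlled by the a priori bound), while $\ind{Z_3^N(s)\ge 1}$ can be replaced by $1$ up to the negligible excursion time above. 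Hence any limit point $(z_2(t))$ of $(Z_2^N(t)/N)$ solves $\dot z_2 = -\kappa_3 z_2$ with $z_2(0)=y$, giving $z_2(t)=ye^{-\kappa_3 t}$ and, by uniqueness, full convergence — and, since the limit is deterministic, the joint convergence of $(Z_2^N/N,\mu_Z^N)$ follows once each marginal converges. I would actually prefer to run this coupled with the identification of $\gamma_s$ so that $(P_N(s))=\kappa_3 Z_2^N(s)$ is known to converge to the continuous process $p(s)=\kappa_3 z_2(s)$, which is exactly the hypothesis of Theorem~\ref{LimitOccApp}.

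Finally, applying Theorem~\ref{LimitOccApp} to the (time-changed, restricted-to-$Z_3=1$) fourth coordinate with $P_N(s)=\kappa_3 Z_2^N(s)\to \kappa_3 z_2(s)$, $\sigma=\kappa_0$, $\lambda=\kappa_5$, $\delta=\kappa_4$, identifies $\gamma_s$ as the image under $x\mapsto\sqrt x$ of $\Gamma_0\!\big(\tfrac{\kappa_0}{2\kappa_5}+\tfrac12,\tfrac{\kappa_4}{2\kappa_3 z_2(s)}\big)$, which is the claimed formula for $\mu_Z^\infty$. The one point needing care — and the main obstacle — is that Theorem~\ref{LimitOccApp} is stated for the process $(K_N(t))$ with a \emph{prescribed} adapted rate process $(P_N(t))$, whereas here $(P_N(t))=\kappa_3 Z_2^N(t)$ is itself part of the dynamics and, more seriously, the death-by-switch term in~\eqref{SDEZ} only fires when $Z_3^N(t{-})=1$, not at every jump of a clean Poisson clock; one must show that inserting the indicator $\ind{Z_3^N(t{-})=1}$, together with the excursion-time estimate, produces the same occupation-measure limit. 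I expect this reconciliation — making rigorous that ``time spent with $Z_3^N=1$'' has asymptotic density one relative to normal time, while the brief excursions above contribute nothing to $\mu_Z^N$ — to be where the real work lies; the rest is an assembly of the already-established Steps in the proofs of Theorems~\ref{TheoImp} and~\ref{LimitOccApp}.
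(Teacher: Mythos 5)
Your proposal follows essentially the same route as the paper: split time according to whether $Z_3^N$ equals $1$ or exceeds it, show by a coupling that each excursion above $1$ lasts $O(1/\sqrt{N})$ (after $Z_4^N$ has built up to order $\sqrt{N}$) and hence contributes nothing to the occupation measure or to the decay of $Z_2^N$, and on the complementary set apply Theorem~\ref{LimitOccApp} with $P_N{=}\kappa_3 Z_2^N$, $\sigma{=}\kappa_0$, $\lambda{=}\kappa_5$, $\delta{=}\kappa_4$, iterating over the successive jump times of ${\cal P}_0$ exactly as the paper does. One small slip: the integrated equation for $Z_2^N$ should carry the factor $Z_3^N(s)$ rather than $\ind{Z_3^N(s)\ge 1}$ (which is identically $1$ here), and the limit drift $-\kappa_3 z_2$ then follows from the negligibility of $\int_0^t Z_2^N(s)(Z_3^N(s){-}1)\diff s/N$, which is precisely what your excursion estimate supplies.
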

The proof of this theorem is carried out dividing the time integral according to the value of $Z_3^N(t)$, recall that it is always above $1$.  
\begin{enumerate}
\item  In the time intervals where $Z_3^N(\cdot){=}(1)$, we will show that the limit result can be deduced from Theorem~\ref{LimitOccApp} of Section~\ref{PartSec}. 
\item For the time intervals during which $Z_3^N(t){\geq}2$, the goal will be of showing that these intervals do not contribute to the final limit,  more precisely  that, for the convergence in distribution, 
	\[
 \lim_{N\to +\infty}\left(\int_0^{+\infty} \ind{Z_3^N(s)\geq 2}F\left(s,\frac{Z_4^N(s)}{\sqrt{N}}\right)\diff s\right)=0,
 \]
 for some convenient class of functions on $\R_+^2$. This is done in the following way.
 \medskip
 
If $t_1^3$ is such that $Z_3^N(t_1^3){=}2$, we define $\tau_Z^N{=}\inf\{t\geq 0: Z_3^N(t_3^N{+}t){=}1\}$.  We will show that $\tau_Z^N=O(1/\sqrt{N})$,  
\begin{itemize}
	\item first, we show  after a time of the order of $O(1/\sqrt{N})$, the process $(Z_4^N(t_1^3{+}t))$ reaches  a value the of order $\sqrt{N}$, and stays at this order of magnitude.
	\item secondly, since $Z_3^N(t)$ decreases at a rate $\kappa_4Z_4^N(t)$, we show that $Z_3^N(t)$ goes back to $1$ after a time of the order of $O(1/\sqrt{N})$. 
\end{itemize}
And to conclude, we only have to notice that these time intervals $(t_1^3, t_1^3+\tau_Z^N)$ are in a finite number in $[0, T]$ for any $T\geq 0$, since every time interval is separated by an exponentially distributed variable with parameter $\kappa_0$.
\end{enumerate}
The proof will be done by induction, on an ``excursion'' of $Z_3^N(t)$ over $2$. On a first time interval $[t_1^3, t_1^3{+}\tau_Z^N)$, and then iterating. 

\begin{proof}
If $(Z_N(0)){=}(y_N,1,v_N)$,  $t_1^3$, the first instant of jump ${+}1$ of $(Z_3^{N}(t))$ has an exponential distribution with parameter $\kappa_0$. Up to time $t_1^3$ the process $(Z_2^N(t),Z_4^N(t))$ has the same distribution as the process  $(A_N(t),B_N(t))$, the solution of the SDE,
\begin{equation}\label{mSDE}
\begin{cases}
\diff A_N(t)= {-}{\cal P}_{3}\left(\left(0,\kappa_{3}A_N(t{-})\right),\diff t\right),\\
\diff B_N(t)={\cal P}_3((0,\kappa_3A_N(t{-})),\diff t){-}{\cal P}_{5}((0,\kappa_{5}B_N(t{-})),\diff t),\\
 \qquad{-}\displaystyle\int_{a,b}S(B_N(t{-}),a,b) {\cal N}\left(\left(0,\kappa_4B_N(t{-})\right],\diff t,\diff a,\diff b\right).
\end{cases}
\end{equation}
with $(A_N(0),B_N(0)){=}(y_N,v_N)$. 

It is straightforward to show that $(A_N(t)/N)$ is converging in distribution to $(y\exp(-\kappa_3 t))$ and that the asymptotic behavior of the occupation measure associated to $(B_N^0(t)/\sqrt{N})$ can be obtained from Theorem~\ref{LimitOccApp} of Section~\ref{PartSec}. It is not difficult to see that the additional term due to ${\cal P}_{5}$ in the SDE defining $(Z_4^N(t))$ does not play a role for this limit result.

If $Z_3^{N}(t_1^3){=}2$, let  $\tau_Z^N$ , be  the hitting time of $1$ of the process $(Z_3^{N}(t{+}t_1^3))$.  
In view of the first equation of SDE~\eqref{SDEZ},  with high probability, we have that, for any $t{\in}[0,T]$, $Z_2^N(t){\ge}\delta N$. To simplify our arguments, since we are dealing with convergence in distribution,  we assume that this relation holds almost surely, it is not difficult to modify our proof accordingly, at the expense of additional terms. If $(C_N(t),D_N(t))$ is the solution of the SDE,
\begin{align*}
\diff C_N(t)&=\widetilde{\cal P}_{0}\left(\left(0,\kappa_{0}\right),\diff t\right){-}\ind{C_N(t{-}){>}0}\widetilde{\cal P}_{4}\left(\left(0,\kappa_{4}(1{+}C_N(t{-}))D_N(t{-})\right),\diff t\right),\\
\diff D_N(t)&=\widetilde{\cal P}_3((0,2\kappa_3\delta N),\diff t){-}\widetilde{\cal P}_{5}((0,\kappa_{5}D_N(t{-})),\diff t),
\end{align*}
with $C_N(0){=}1$ and $D_N(0){=}0$, and where, for $i{\in}\{0,3,4,5\}$, $\widetilde{\cal P}_i(\diff s,\diff t)$ is the Poisson process ${\cal P}_i$ shifted at $t_1^3$, i.e. ${\cal P}_i(\diff s ,t_1^3{+}\diff t)$.
A simple coupling can be constructed so that the relations
$D_N(t){\le}Z_4^N(t_1^3{+}t)$ and $Z_3^N(t){\le}C_N(t_1^3{+}t){+}1$ hold for all $0{\le}t{\le}\tau_Z^N$.

Standard arguments of stochastic calculus, give that, for the convergence in distribution, the relation
\[
\lim_{N\to+\infty}\left(\frac{D_N(t/\sqrt{N})}{\sqrt{N}}\right)=(2\kappa_3\delta t)
\]
holds and if $\tau_D^N$ is the hitting time of $\lceil \kappa_3\delta \sqrt{N}\rceil$, then
\[
\limsup_{N\to+\infty} \sqrt{N}\E(\tau_D^N) <{+}\infty, 
\]
and, since $\left(C_N(\tau_D^N){-}1\right)^+$ is bounded by the number of new arrivals for $(C_N(t))$ on the time interval $[0,\tau_D^N)$
  \[
\limsup_{N\to+\infty} \sqrt{N}\E\left(\left(C_N(\tau_D^N){-}1\right)^+\right) <{+}\infty. 
\]
For $T{>}0$, starting from $\tau_D^N$, the process $(D_N(t))$ stays above $\kappa_3\delta \sqrt{N}$ with high probability on a time interval $[\tau_D^N,\tau_D^N{+}T/\sqrt{N}]$. With the same argument as before, we assume that this relation holds almost surely. 

If $\tau_C^N$ is the hitting time of $0$ by $(C_N(\tau_D^N{+}t))$, then $\tau_Z^N{\le}\tau_D^N{+}\tau_C^N$. The integration of the SDE for $(C_N(t))$ gives the relation
\begin{multline*}
\E\left(C_N(\tau_D^N)\right){+}\kappa_0\E(\tau_C^N{\wedge}t){-}\kappa_{4}\kappa_3\delta \sqrt{N} \E\left(\int_{0}^{\tau_C^N{\wedge}t} (1{+}C_N(\tau_D^N{+}s))\diff s\right)\\=\E\left(C_N(\tau_D^N{+}\tau_C^N{\wedge}t)\right),
\end{multline*}
hence, since $C_N(t){\ge}1$ on the time interval $[\tau_D^N,\tau_D^N{+}\tau_C^N)$, we obtain
  \[
\left(\kappa_{4}\kappa_3\delta \sqrt{N}{-}\kappa_0\right) \E\left(\tau_C^N{\wedge}t\right)\le \E\left(C_N(\tau_D^N)\right)\le  1{+} \E\left(\left(C_N(\tau_D^N){-}1\right)^+\right),
\]
hence
\[
\limsup_{N\to+\infty} \sqrt{N}\E(\tau_C^N) <{+}\infty.
\]
By gathering these results, and since, for $t{\ge}0$,
\[
Z_4^N(t_1^3{+}t/\sqrt{N})-Z_4^N(t_1^3){\le}\widetilde{\cal P}_3((0,2\kappa_3\delta N),[0,t/\sqrt{N}]),
\]
we have finally obtained that
\[
\limsup_{N\to+\infty} \sqrt{N}\E(\tau_Z^N) <{+}\infty,\quad \text{and}\quad
\limsup_{N\to+\infty} \frac{\E(Z_4^N(t_1^3+\tau_Z^N)-Z_4^N(t_1^3))}{\sqrt{N}}<{+}\infty. 
\]
This shows that, if $f$ is a bounded Borelian function on $\R_+^2$, the sequences of random variables
\[
\left(\int_{t_1^3}^{t_1^3{+}\tau_Z^N}\ind{Z_3^N(s){\ge}2}f\left(\frac{Z_2^N(s)}{N},\frac{Z_4^N(s)}{\sqrt{N}}\right)\diff s\right) \text{ and }
\left(\left|\frac{Z_2^N(t_1^3{+}\tau_Z^N){-}Z_2^N(t_1^3)}{N}\right|\right)
\]
converge in distribution to $0$. We can now apply the convergence result, Theorem~\ref{LimitOccApp}, for the processes $(Z_2^N(t),Z_4^N(t))$ starting from time $t_1^3{+}\tau_Z^N$, as if it was starting from time $t_1^3$ with $Z_3^N(t_1^3){=}1$, since their first coordinate is of the same order of magnitude in $N$, and that the limit result for the occupation measure does not depend on the initial value of $Z_4^N(t_1^3{+}\tau_Z^N)$ as long as it is of the order of $N$ at most. 

We conclude the proof of the proposition by induction on the successive jumps $(t_i^3)$ of ${\cal P}_0((0,\kappa_0),\diff t)$ on the time interval $[0,T]$.
\end{proof}
Note that the induction holds because the averaging result of Theorem~\ref{LimitOccApp} do not need for $Z_4^N(t)$ to start from a state of the order of $\sqrt{N}$, but only  $O(N)$. This particular property is vital here, since we do not have the control of the process $(Z_4^N(t)/\sqrt{N})$ over some time interval, but only of its time integrals. 

We have seen in the proof that the process $(Z_3^N(t))$ is identically $1$, \emph{as long as we consider time integrals}. The following Corollary is easily deduced from Relations~\eqref{Fq2} and~\eqref{mSDE}.  

\begin{corollary}\label{CorZ}
Let $F{\in}{\cal C}(\R_+^2)$ with support on $[0,T]{\times}\R_+$ such that there exist  $c_1$, $c_2{>}0$ such that for any $s,x\in \R_+$, 
$F(s,x)\leq c_1{+}c_2x$ holds for $(s,x){\in}[0,T]{\times}\R_+$, then under the assumptions of Theorem~\ref{TheoCVZ}, the sequence
  \[
 \left(\int_0^{+\infty} \ind{Z_3^N(s)=1}F\left(s,\frac{Z_4^N(s)}{\sqrt{N}}\right)\diff s\right) 
 \]
converges in distribution to $\croc{\mu_Z^\infty,F}$.
\end{corollary}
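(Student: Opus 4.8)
The plan is to deduce the statement from Theorem~\ref{TheoCVZ} in two stages: first treat compactly supported $F$, and then remove the growth restriction by a truncation argument controlled by the $L^2$-estimate behind Relation~\eqref{Fq2}. Write $\croc{\nu_N,F}{=}\int_0^{+\infty}\ind{Z_3^N(s){=}1}F\bigl(s,Z_4^N(s)/\sqrt N\bigr)\diff s$ for the random variable of the statement. Since $(Z_3^N(t))$ takes values in $\{1,2,\ldots\}$, one has
\[
\croc{\nu_N,F}=\croc{\mu_Z^N,F}-\int_0^{+\infty}\ind{Z_3^N(s)\geq 2}F\bigl(s,Z_4^N(s)/\sqrt N\bigr)\diff s,
\]
with $\mu_Z^N$ the occupation measure of Definition~\ref{defmN}. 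For $F{\in}{\cal C}_c(\R_+^2)$, Theorem~\ref{TheoCVZ} gives $\croc{\mu_Z^N,F}{\to}\croc{\mu_Z^\infty,F}$ in distribution, and the second term converges in distribution to $0$: this is precisely point~(2) in the proof of Theorem~\ref{TheoCVZ}, where the time integral, over the excursions of $(Z_3^N(t))$ above $2$ --- finitely many on $[0,T]$, each of duration $O(1/\sqrt N)$ --- of any bounded Borel function of $(s,Z_4^N(s)/\sqrt N)$ is shown to vanish. So I would first record that $\croc{\nu_N,F}{\to}\croc{\mu_Z^\infty,F}$ in distribution for every $F{\in}{\cal C}_c(\R_+^2)$.

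Next I would establish the uniform bound
\[
C_T\steq{def}\sup_{N\geq 1}\int_0^T\E\left[\left(\frac{Z_4^N(s)}{\sqrt N}\right)^2\right]\diff s<+\infty,
\]
obtained exactly as Relation~\eqref{Fq2}, by integrating the SDE~\eqref{SDEZ} (equivalently~\eqref{mSDE} on each excursion of ${\cal P}_0$), taking expectations, and using $Z_2^N(t){\leq}y_N$ together with the fact that the ${\cal N}$-term removes in mean a quantity of order $(Z_4^N(t))^2$. Then, given a continuous cutoff $\phi_M{:}\R_+{\to}[0,1]$ with $\phi_M{\equiv}1$ on $[0,M]$ and $\phi_M{\equiv}0$ on $[M{+}1,+\infty)$, set $F_M(s,x){=}F(s,x)\phi_M(x){\in}{\cal C}_c(\R_+^2)$; using $|F(s,x)|{\leq}c_1{+}c_2x{\leq}(c_1{+}c_2)x$ on $[0,T]{\times}[M,+\infty)$ for $M{\geq}1$, the support of $F$ in $s$, and Markov's inequality, one gets
\[
\E\bigl|\croc{\nu_N,F}{-}\croc{\nu_N,F_M}\bigr|\leq(c_1{+}c_2)\int_0^T\E\left[\frac{Z_4^N(s)}{\sqrt N}\ind{Z_4^N(s)/\sqrt N>M}\right]\diff s\leq\frac{(c_1{+}c_2)C_T}{M},
\]
uniformly in $N$. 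On the limit side, Relation~\eqref{MomGam} shows that $s{\mapsto}\int_{\R_+}(c_1{+}c_2\sqrt x)\,\Gamma_0\bigl(\tfrac{\kappa_0}{2\kappa_5}{+}\tfrac12,\tfrac{\kappa_4}{2\kappa_3 z_2(s)}\bigr)(\diff x)$ is bounded on $[0,T]$, hence integrable and dominating $s{\mapsto}\int_{\R_+}|F(s,\sqrt x)|\,\Gamma_0(\cdots)(\diff x)$, so $\croc{\mu_Z^\infty,F}$ is finite and $\croc{\mu_Z^\infty,F_M}{\to}\croc{\mu_Z^\infty,F}$ by dominated convergence. A standard three-$\eta$ approximation argument (Theorem~3.2 of~\citet{Billingsley}) --- fix $M$ with $(c_1{+}c_2)C_T/M{<}\eta$ and $|\croc{\mu_Z^\infty,F_M}{-}\croc{\mu_Z^\infty,F}|{<}\eta$, then use the $F_M$-convergence for $N$ large --- then gives $\croc{\nu_N,F}{\to}\croc{\mu_Z^\infty,F}$ in distribution.

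The step I expect to require the most care is the uniform $L^2$ bound $C_T{<}+\infty$ \emph{over all of} $[0,T]$ rather than over a single excursion, since $(Z_4^N(t))$ receives extra positive jumps while $(Z_3^N(t)){\geq}2$. I would control this exactly as in the proof of Theorem~\ref{TheoCVZ}: the excursions of $(Z_3^N(t))$ above $2$ are finite in number on $[0,T]$, of duration $O(1/\sqrt N)$, and each adds at most $O(\sqrt N)$ to $(Z_4^N(t))$; alternatively one can argue directly from a differential inequality for $\E(Z_4^N(t))$ and $\E(Z_4^N(t)^2)$ deduced from~\eqref{SDEZ}, using $Z_2^N(t){\leq}y_N$ and $Z_3^N(t){\leq}1{+}{\cal P}_0((0,\kappa_0],(0,t])$.
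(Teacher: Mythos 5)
Your proof is correct and fills in, with the right level of care, exactly the argument the paper gestures at when it says the corollary is ``easily deduced from Relations~\eqref{Fq2} and~\eqref{mSDE}'': the compactly supported case via Theorem~\ref{TheoCVZ} together with the vanishing of the $\{Z_3^N{\ge}2\}$ contribution, and the extension to linearly growing $F$ by truncation controlled by the uniform $L^2$ bound. The one point you rightly flag --- propagating the second-moment bound across the $\{Z_3^N{\ge}2\}$ excursions --- is handled exactly as in the proof of Theorem~\ref{TheoCVZ}, so there is no gap.
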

\subsection{Time Evolution on the Timescale $(\sqrt{N}t)$}
We are now going to express $(U_N(t))$ in terms of $(Z_N(t))$. Recall that the process $(Z_2^N(t),Z_4^N(t))$ analyzed in the last section is just the process $(U_2^N(t),U_4^N(t))$ with the time intervals during which $(U_3^N(t))$ is $0$ removed. See Proposition~\ref{TC1Prop}.

We denote by $(Z_N(t))$ the solution of the SDE~\eqref{SDEZ} starting from $(y_N,1,v_N)$.
 We define
\begin{equation}\label{Hdef}
\left(H_N(t)\right){=}\left(t{+}\int_{(0,t]\times\R_+{\times}\R_+^\N}a\ind{Z_3^N(s-){=}1}{\cal N}\left(\left(0,\kappa_4Z_4^N(s{-})\right],\diff s,\diff a,\diff b\right)\right),
\end{equation}
and the (potential) hitting time of $0$ for $(Z_3^N(t))$ is defined as  $\tau_N$, i.e.
    \[
  \tau_N=\inf\left\{t:\int_{(0,t]\times\R_+{\times}\R_+^\N}\ind{Z_3^N(s-){=}1}{\cal N}((0,\kappa_4Z_4^N(s{-})],\diff s,\diff a,\diff b){\ne}0\right\}.
      \]
Strictly speaking, this is an incorrect presentation for $\tau_N$ since  $(Z_3^N(t))$ never visits $0$. This is in fact meant for $(U_3^N(t))$, as long as the two  processes coincide. 
Additionally,  $(a_N,b_N){\in}\R_+{\times}\R_+^\N$ is  the mark associated to $\tau_N$, i.e. $${\cal N}((0,\kappa_4Z_4^N(\tau_N{-})], \{\tau_N\},\diff a,\diff b)=\delta_{(a_N,b_N)},$$
  $a_N$ and $b_N{=}(b_{N,i})$ are independent and independent of ${\cal F}_{\tau_n}$, with respective distributions, an exponential law with parameter $\kappa_0$ and the law of an i.i.d. sequence of exponential random variables with parameter $\kappa_5$. We define $t_N{=}\tau_N{+}a_N$.
  
We now construct  a process $(\widetilde{U}_N(t)){=}(\widetilde{U}_2^N(t),\widetilde{U}_3^N(t),\widetilde{U}_4^N(t))$ with initial state  $(y_N,1,v_N)$ and 
\begin{equation}\label{SDETU}
  \begin{cases}
 t{<}\tau_N, & {\scriptstyle (\widetilde{U}_2^N,\widetilde{U}_3^N,\widetilde{U}_4^N)(t)=(Z_2^N,Z_3^N,Z_4^N)(t),} \\ \ \\
 \tau_N{\le} t{<}t_N,
& \hspace{-3mm} \begin{cases}
  {\scriptstyle (\widetilde{U}_2^N,\widetilde{U}_3^N)(t){=}(Z_2^N(\tau_N{-}),0),} \\
 {\scriptstyle   \widetilde{U}_4^N(t){=}\hspace{-1mm}\sum_{i=1}^{Z_4^N(\tau_N{-})}\ind{b_{N,i}{>}t{-}\tau_N}{=}Z_4^N(\tau_N{-}){-}S\left(Z_4^N(\tau_N{-}),t{-}\tau_N,b_N\right)},
\end{cases}\\ \ \\
t{=}t_N,&  \hspace{-5mm}{\scriptstyle  (\widetilde{U}_2^N,\widetilde{U}_3^N,\widetilde{U}_4^N)(t_N){=}\left(\widetilde{U}_2^N(t_N{-}),1,Z_4^N(\tau_N{-}){-}S\left(Z_4^N(\tau_N{-}),a_N,b_N\right)\right)}.
  \end{cases}
\end{equation}
We have constructed the process $(\widetilde{U}_N(t))$  between two visits of the third coordinate to $1$. We can construct by induction the process on the whole real half-line.
  \begin{proposition}
    For $N{\geq}1$, the processes $(U_N(t))$ and $(\widetilde{U}_N(t))$ defined respectively  by Relations~\eqref{SDEm42} and~\eqref{SDETU} have the same distribution.
Furthermore the relations
    \[
\left(\int_0^{H_N(t)}\ind{\widetilde{U}_3^N(s)\ge 1}\diff s\right)=(t) \text{ and } \left(\widetilde{U}_2^N\left(H_N(t)\right)\right)=\left(Z_2^N\left(t\right)\right)
\]
hold. 
  \end{proposition}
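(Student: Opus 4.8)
The plan is to prove the three assertions in turn: the equality of laws is the substantial point, while the two displayed identities are a deterministic bookkeeping of the time change $H_N$ once the construction~\eqref{SDETU} is read correctly.

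For $(U_N(t))\steq{dist}(\widetilde U_N(t))$ I would mimic the proof of Proposition~\ref{TC1Prop}, via a repeated application of the strong Markov property, but run the excursion argument in the opposite direction: rather than deleting the zero-excursions of the third coordinate, $(\widetilde U_N(t))$ reinserts them. Starting from $(y_N,1,v_N)$, let $\theta$ be the first time $(U_3^N(t))$ hits $0$. On $[0,\theta)$ one has $U_3^N{\ge}1$, and, comparing the jump rates of~\eqref{SDEm42} and~\eqref{SDEZ} as functions of the current state, the two dynamics coincide on $\{\text{third coordinate}{\ge}2\}$, while at the value $1$ the only discrepancy is that the event firing at rate $\kappa_4$ times the current fourth coordinate drives the transition of $(U_3^N(t))$ from $1$ to $0$ in~\eqref{SDEm42}, whereas in~\eqref{SDEZ} it triggers the ${\cal N}$-term, leaving the third coordinate equal to $1$ and decreasing the fourth coordinate by $S(\cdot,a,b)$ as in~\eqref{SDEZ}. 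Hence $(U_N(t),t{<}\theta)$ and $(Z_N(t),t{<}\tau_N)$ are stopped Markov processes with the same generator, so have the same law. At $\theta$, the third coordinate of $U_N$ stays at $0$ for an independent $\mathrm{Exp}(\kappa_0)$ time (the wait for the next point of ${\cal P}_0$), during which the fourth coordinate discards those particles whose independent $\mathrm{Exp}(\kappa_5)$ lifetimes have elapsed; this matches exactly the law of $(a_N,b_N)$ and the prescription of~\eqref{SDETU} on $[\tau_N,t_N)$. A short computation gives $\widetilde U_N(t_N)=Z_N(\tau_N)$: at $\tau_N$ the process $(Z_4^N(t))$ jumps by $-S(Z_4^N(\tau_N{-}),a_N,b_N)$ while, a.s., neither $(Z_2^N(t))$ nor $(Z_3^N(t))$ jumps there, and $\widetilde U_3^N(t_N){=}1{=}Z_3^N(\tau_N)$. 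So the construction restarts from $(Z_N(\tau_N))$ with a fresh copy of the driving processes, and one iterates over the successive returns of the third coordinate to $1$; since every such return is preceded by a zero-excursion of independent $\mathrm{Exp}(\kappa_0)$ length, a.s.\ only finitely many occur in any bounded interval, and the induction shows that $(\widetilde U_N(t))$ and $(U_N(t))$ have the same finite-dimensional distributions.

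For the two identities I would treat one excursion and then iterate. Let $0{=}\rho_0{<}\rho_1{<}\rho_2{<}\cdots$ be the instants $s$ at which ${\cal N}$ has a point in $(0,\kappa_4Z_4^N(s{-})]$ with $Z_3^N(s{-}){=}1$ (so $\rho_1{=}\tau_N$), with associated $a$-marks $a^{(1)},a^{(2)},\ldots$; by~\eqref{Hdef}, $H_N(t)=t+\sum_{j:\rho_j\le t}a^{(j)}$. From~\eqref{SDETU}, in the time variable of $\widetilde U_N$ the $k$-th zero-excursion of $(\widetilde U_3^N(t))$ fills the interval $[\rho_k+\sum_{j<k}a^{(j)},\,\rho_k+\sum_{j\le k}a^{(j)})$, on which $\widetilde U_3^N{=}0$ and $\widetilde U_2^N$ is constant, and off these intervals $\widetilde U_3^N{\ge}1$ with $\widetilde U_N$ equal to $Z_N$ read at the corresponding $Z$-time. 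One checks that a zero-excursion lies entirely below the level $H_N(t)$ exactly when $\rho_k\le t$, so the total length of zero-excursions before $H_N(t)$ is $\sum_{j:\rho_j\le t}a^{(j)}$ and $\int_0^{H_N(t)}\ind{\widetilde U_3^N(s)\ge 1}\diff s=H_N(t)-\sum_{j:\rho_j\le t}a^{(j)}=t$, which is the first identity. For the second, $(\widetilde U_2^N(t))$ and $(Z_2^N(t))$ are pure-jump and constant between the points of ${\cal P}_3$, a.s.\ none of which coincides with a $\rho_j$, and $t\mapsto H_N(t)$ carries the $Z$-time $t$ to the $\widetilde U$-time lying just past exactly the ${\cal P}_3$-points in $[0,t]$; hence $\widetilde U_2^N(H_N(t))=Z_2^N(t)$. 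One concludes by induction on $k$, restarting after $t_N$ from $(Z_N(\tau_N))$.

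The main obstacle is the first part, and inside it the comparison of the two SDEs: one must make rigorous that, up to the first visit of the third coordinate to $0$, the dynamics of~\eqref{SDEm42} and~\eqref{SDEZ} agree --- re-reading the ${\cal P}_4$-driven transition of $(U_3^N(t))$ from $1$ to $0$ in~\eqref{SDEm42} as the ${\cal N}$-event of~\eqref{SDEZ} --- and that this identification is preserved across the reinserted excursion, so that the strong Markov property may be invoked at each of the (a.s.\ locally finitely many) returns of the third coordinate to $1$. Once this is settled, the distributional matchings ($\mathrm{Exp}(\kappa_0)$ and $\mathrm{Exp}(\kappa_5)$ laws, the jump of $(Z_4^N(t))$ at $\tau_N$) and the time-change identities are routine.
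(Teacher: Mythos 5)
Your proposal is correct and follows essentially the same route as the paper: the identity in law is obtained, as in Proposition~\ref{TC1Prop}, by the strong Markov property at the successive returns of the third coordinate to $1$ together with the representation of the zero-excursion of the fourth coordinate via i.i.d.\ $\mathrm{Exp}(\kappa_5)$ lifetimes and an independent $\mathrm{Exp}(\kappa_0)$ duration (matching the marks $(a_N,b_N)$ of ${\cal N}$), and the two displayed identities are exactly the deterministic bookkeeping of $H_N$ that the paper invokes. Your write-up merely makes explicit the details the paper leaves as ``analogous'' and ``directly from the construction''.
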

We have in particular the identity $(H_N(t)){\steq{dist}}(\ell_1^N(t))$. See Definition~\ref{elldef}.
  \begin{proof}
    The proof of the identity in distribution is analogous to the proof of Proposition~\ref{TC1Prop}. It relies again on strong Markov properties of Poisson processes and the representation of the process $(A_N(t))$ used in this proof.

The first relation comes directly from the construction of $(\widetilde{U}_N(t))$. The last relation is a consequence of the fact that $(\widetilde{U}_2^N(t))$ does not change on the times intervals where $(\widetilde{U}^N_3(t))$ is null. 
  \end{proof}
\begin{proposition}\label{PropH}
If $(Z_N(t))$ is the solution of the SDEs~\eqref{SDEZ} with the initial condition $(y_N,1,v_N)$ and $(y_N,v_N)$ satisfying Relation~\eqref{InitIV} then,  for the convergence in distribution, 
  \[
  \lim_{N\to+\infty} \left(\frac{H_N(t)}{\sqrt{N}}\right)=\left(t_\infty\left(1{-}e^{-\kappa_3 t/2}\right)\right),
  \]
with
\begin{equation}\label{cH}
t_\infty\steq{def}\sqrt{y}\frac{\sqrt{2\kappa_4}}{\kappa_5\sqrt{\kappa_3}}\frac{\Gamma({\kappa_0}/{(2\kappa_5)})}{\Gamma({\kappa_0}/{(2\kappa_5)}{+}{1}/{2})}.
\end{equation}
\end{proposition}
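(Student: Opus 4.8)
The plan is to start from the Doob--Meyer decomposition of the jump integral appearing in Relation~\eqref{Hdef}. The marks $a$ attached to the points of ${\cal N}$ are exponentially distributed with parameter $\kappa_0$, so $\E(a){=}1/\kappa_0$, $\E(a^2){=}2/\kappa_0^2$, and, since the points of ${\cal N}((0,\kappa_4Z_4^N(s{-})],\diff s,\diff a,\diff b)$ with $Z_3^N(s{-}){=}1$ occur at the instantaneous rate $\kappa_4Z_4^N(s{-})$, one obtains
\[
\frac{H_N(t)}{\sqrt{N}}=\frac{t}{\sqrt{N}}+\frac{\kappa_4}{\kappa_0}\int_0^t\frac{Z_4^N(s{-})}{\sqrt{N}}\ind{Z_3^N(s{-})=1}\,\diff s+M_N(t),
\]
where $(M_N(t))$ is a local martingale with predictable increasing process $\croc{M_N}(t){=}(2\kappa_4/(\kappa_0^2N))\int_0^t Z_4^N(s{-})\ind{Z_3^N(s{-})=1}\,\diff s$. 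First I would discard the two error terms: $t/\sqrt{N}{\to}0$ is immediate, and for the martingale one bounds $\E(\croc{M_N}(T))$ by $(2\kappa_4/(\kappa_0^2N))\int_0^T\E(Z_4^N(s))\,\diff s$; combining the a priori $L^2$ estimate of the type of Relation~\eqref{Fq2} for $(Z_4^N(s)/\sqrt{N})$ with the Cauchy--Schwarz inequality shows this is $O(1/\sqrt{N})$, hence $(M_N(t))$ converges in distribution to $0$ uniformly on compact sets by Doob's inequality.

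Next I would treat the main term. Splitting the integral according to $\{Z_3^N(s){=}1\}$ and $\{Z_3^N(s){\ge}2\}$, the contribution of the latter set is negligible by the estimates established in the proof of Theorem~\ref{TheoCVZ} (the total length of those excursions and the increments of $Z_4^N$ over them are $O(1/\sqrt{N})$). On the set $\{Z_3^N(s){=}1\}$, Corollary~\ref{CorZ} applies: using it with the function $F_t(s,x){=}\ind{s\le t}x$, obtained as a monotone (from above and below) limit of continuous functions with linear growth and support in $[0,T]$ so as to fit the hypotheses, gives, for each fixed $t$, the convergence in distribution of $\int_0^t(Z_4^N(s{-})/\sqrt{N})\ind{Z_3^N(s{-})=1}\,\diff s$ to
\[
\croc{\mu_Z^\infty,F_t}=\int_0^t\int_0^{+\infty}\sqrt{x}\;\Gamma_0\!\left(\frac{\kappa_0}{2\kappa_5}{+}\frac12,\frac{\kappa_4}{2\kappa_3 z_2(s)}\right)(\diff x)\,\diff s,\qquad z_2(s){=}ye^{-\kappa_3 s}.
\]
The fractional moment formula~\eqref{MomGam} evaluates the inner integral as $\sqrt{2\kappa_3z_2(s)/\kappa_4}\,\Gamma(\kappa_0/(2\kappa_5){+}1)/\Gamma(\kappa_0/(2\kappa_5){+}1/2)$; inserting $z_2(s){=}ye^{-\kappa_3 s}$, using $\Gamma(z{+}1){=}z\Gamma(z)$ together with $\int_0^t e^{-\kappa_3 s/2}\diff s{=}(2/\kappa_3)(1{-}e^{-\kappa_3 t/2})$, and multiplying by $\kappa_4/\kappa_0$, one recovers after simplification exactly $t_\infty(1{-}e^{-\kappa_3 t/2})$ with $t_\infty$ given by Relation~\eqref{cH}. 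This last step is a routine computation.

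Finally, the pointwise-in-$t$ convergence in distribution must be upgraded to the convergence of the process $(H_N(t)/\sqrt{N})$. Since $t\mapsto H_N(t)/\sqrt{N}$ is non-decreasing and the candidate limit $t\mapsto t_\infty(1{-}e^{-\kappa_3 t/2})$ is continuous, the convergence in distribution of the one-dimensional marginals to deterministic values entails the convergence of the finite-dimensional marginals, and, by the classical fact that non-decreasing functions converging pointwise on a dense set to a continuous limit converge uniformly on compact sets (a Dini/P\'olya-type argument), the convergence in distribution for the uniform topology on compact sets. I expect the main obstacle to be not any single estimate but the clean reduction to Corollary~\ref{CorZ}: one must carefully borrow from the proof of Theorem~\ref{TheoCVZ} the control of the excursions $\{Z_3^N{\ge}2\}$ and deal with the discontinuity in the time variable of $F_t$, the remaining ingredients being a standard martingale estimate and a Gamma-function identity.
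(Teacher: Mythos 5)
Your proposal is correct and follows essentially the same route as the paper's proof: the same Doob--Meyer decomposition of $H_N(t)/\sqrt{N}$, the same martingale estimate via the predictable bracket and Doob's inequality, the identification of the drift limit through Corollary~\ref{CorZ}, and the evaluation via the fractional moment formula~\eqref{MomGam}. The extra care you take (approximating $\ind{s\le t}x$ by continuous functions to fit the hypotheses of Corollary~\ref{CorZ}, and the Dini/P\'olya argument upgrading pointwise to uniform convergence of the non-decreasing processes) is welcome, and your constant $2\kappa_4/\kappa_0^2$ in the bracket, coming from $\E(a^2){=}2/\kappa_0^2$, is in fact the correct one.
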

\begin{proof}
For $t{\ge}0$,
\[
\frac{H_N(t)}{\sqrt{N}} =\frac{t}{\sqrt{N}}{+}M_N(t){+} \frac{\kappa_4}{\kappa_0}\int_0^t \ind{Z_3^N(s-){=}1}\frac{Z_4^N(s)}{\sqrt{N}}\diff s,
\]
where $(M_N(t))$ is a martingale whose previsible increasing process is
\[
\left(\croc{M_N}(t)\right)=\left(2\frac{\kappa_4}{\kappa_0}\int_0^t\ind{Z_3^N(s-){=}1}\frac{Z_4^N(s)}{N}\diff s\right).
\]
Theorem~\ref{TheoCVZ} and Doob's Inequality give that $(M_N(t))$ is converging in distribution to $0$ and, with Corollary~\ref{CorZ},  that $(H_N(t)/\sqrt{N})$  converges to
\begin{multline*}
\left(\frac{\kappa_4}{\kappa_0}\int_0^t\int_0^{+\infty} \sqrt{x}\Gamma\left(\frac{\kappa_0}{2\kappa_5}{+}\frac{1}{2}, \frac{\kappa_4}{2\kappa_3 z_2(s)}\right)(\diff x) \diff s\right)
\\=\left(\frac{\kappa_4}{\kappa_0}\frac{\Gamma({\kappa_0}/{(2\kappa_5)}{+}{1})}{\Gamma({(\kappa_0)}/{(2\kappa_5)}{+}{1}/{2})}\sqrt{2}\sqrt{\frac{\kappa_3}{\kappa_4}}\int_0^t \sqrt{z_2(s)}\diff s\right),
\end{multline*}
by Relation~\eqref{MomGam}, with $(z_2(t)){=}(y\exp({-}\kappa_3 t))$.  The proposition is proved. 
\end{proof}
\begin{theorem}\label{AP2}
  If $(U_N(t))$ is the solution of the SDEs~\eqref{SDEm42} with the initial condition $(y_N,1,v_N)$ and $(y_N,v_N)$ satisfying Relation~\eqref{InitIV} then,  for the convergence in distribution,
  \[
  \lim_{N\to+\infty}\left(\frac{U_2^N\left(\sqrt{N}t\right)}{N}, t{<}t_\infty\right)=(u_2(t))\steq{def}\left(y\left(1{-}\frac{t}{t_\infty}\right)^2, t{<}t_\infty\right)
  \]
with $t_\infty$ is defined in Proposition~\ref{PropH}. 
\end{theorem}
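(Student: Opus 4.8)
The strategy is to transfer the convergence of the time-changed process $(Z_2^N(t))$ of Theorem~\ref{TheoCVZ} back to $(U_2^N(t))$ via the inverse of the time change $(H_N(t))$, and then to identify $(U_N(t))$ with $(X_N(t))$ on the timescale $(\sqrt{N}t)$ using Proposition~\ref{U=Z}. First I would recall from the last proposition that $\left(\widetilde U_2^N(H_N(t))\right)=\left(Z_2^N(t)\right)$ and $(H_N(t))\steq{dist}(\ell_1^N(t))$, so that $(U_2^N(\ell_1^N(t)))\steq{dist}(Z_2^N(t))$; equivalently, on the original timescale, $U_2^N$ is obtained from $Z_2^N$ by inserting the (null-drift) intervals where $U_3^N=0$, whose total length up to the $t$-th instant is $H_N(t)$. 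The key point is that $H_N$ is the \emph{correct} time change: by Proposition~\ref{PropH}, $\left(H_N(t)/\sqrt{N}\right)$ converges in distribution to the deterministic, continuous, strictly increasing function $h(t)\steq{def}t_\infty(1-e^{-\kappa_3 t/2})$, whose inverse on $[0,t_\infty)$ is $h^{-1}(s)=-\tfrac{2}{\kappa_3}\ln\!\left(1-s/t_\infty\right)$.

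The main computational step is then the composition. Writing $\overline U_2^N(t)\steq{def}U_2^N(\sqrt{N}t)/N$ and using the identity $U_2^N(\ell_1^N(t))\steq{dist}Z_2^N(t)$ together with $\ell_1^N(t)\steq{dist}H_N(t)$, one gets that $\overline U_2^N$ evaluated at $H_N(s)/\sqrt{N}$ equals (in distribution) $Z_2^N(s)/N$, which by Theorem~\ref{TheoCVZ} converges to $z_2(s)=ye^{-\kappa_3 s}$. Since $H_N(\cdot)/\sqrt{N}\to h(\cdot)$ and $h$ is a continuous strictly increasing bijection $[0,\infty)\to[0,t_\infty)$, inverting the time change (a continuous mapping argument, using that $(s\mapsto Z_2^N(s)/N)$ is monotone nonincreasing and hence its modulus of continuity is controlled, so the composition with the converging inverse time changes is continuous for the relevant topology) yields, for $t<t_\infty$,
\[
\lim_{N\to+\infty}\overline U_2^N(t)=z_2\!\left(h^{-1}(t)\right)=y\exp\!\left(-\kappa_3\cdot\left(-\tfrac{2}{\kappa_3}\ln(1-t/t_\infty)\right)\right)=y\left(1-\frac{t}{t_\infty}\right)^2,
\]
which is exactly $u_2(t)$. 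Here one uses that $Z_2^N$ being monotone makes the pointwise/uniform-on-compacts convergence robust under the random time reparametrization, so no delicate Skorohod-topology issue arises for this coordinate.

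Finally, to pass from $(U_N(t))$ to the actual CRN $(X_N(t))$ one invokes Proposition~\ref{U=Z}, which asserts that on $[0,\sqrt{N}T]$ the processes $(X_N(t))$ and $(U_N(t))$ are close (the discrepancy lives on time intervals of length $O(1/N)$, of which there are $O(\sqrt{N})$, and on each a competing event has probability $O(1/N)$, so the total contribution vanishes); hence $(U_2^N(\sqrt{N}t)/N)$ and $(X_2^N(\sqrt{N}t)/N)$ have the same limit. I expect the main obstacle to be the rigorous justification of the inverse-time-change step: one must argue that the map sending a nonincreasing path $z$ and an increasing time change $H$ to $z\circ H^{-1}$ is continuous at the limiting pair $(z_2,h)$, which requires that $h$ be \emph{strictly} increasing (true, since $h'(t)=\tfrac{\kappa_3}{2}t_\infty e^{-\kappa_3 t/2}>0$) and that the convergence $H_N/\sqrt{N}\to h$ be uniform on compacts — both available from Proposition~\ref{PropH} after noting $H_N$ is itself nondecreasing, so Dini-type arguments upgrade the convergence. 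The restriction $t<t_\infty$ is exactly the condition that $h^{-1}(t)$ be finite, i.e. that only finitely many cycles have occurred; as $t\uparrow t_\infty$ the preimage $h^{-1}(t)\to+\infty$ and $u_2(t)\to 0$, consistent with the eventual decay of $X_2^N$.
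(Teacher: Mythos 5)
Your proposal is correct and follows essentially the same route as the paper: invert the time change $H_N$ (the paper defines $s_N(t){=}\inf\{u:H_N(u){>}\sqrt{N}t\}$, shows it converges to $-\tfrac{2}{\kappa_3}\ln(1{-}t/t_\infty)$ via Proposition~\ref{PropH}, uses that $\widetilde{U}_2^N$ is constant on the inserted intervals so $\widetilde{U}_2^N(\sqrt{N}t){=}Z_2^N(s_N(t))$, and concludes with Theorem~\ref{TheoCVZ} and the random-time-change theorem of Ethier--Kurtz). The only superfluous element is your final appeal to Proposition~\ref{U=Z}: the statement concerns $(U_2^N)$ itself, so the comparison with $(X_2^N)$ is not needed here.
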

The quadratic decay can be seen in the simulations of Figure~\ref{FigAV}~(A).
\begin{proof}
For $0{\le}t{<}t_\infty$, define
  \[
  s_N(t)=\inf\{u{>}0:H_N(u)>\sqrt{N} t\},
  \]
Proposition~\ref{PropH}  gives the convergence in distribution
\begin{equation}\label{SNCV}
\lim_{N\to+\infty} \left(s_N(t)\right)= \left({-}\frac{2}{\kappa_3}\ln\left(1{-}\frac{t}{t_\infty}\right)\right).
\end{equation}
We have the identities
  \[
\left(\frac{\widetilde{U}_2^N(\sqrt{N}t)}{N}\right)=  \left(\frac{\widetilde{U}_2^N(H_N(s_N(t)))}{N}\right)=\left(\frac{Z_2^N((s_N(t)))}{N}\right),
\]
the first one is due to the fact that  $\widetilde{U}_2^N$ does not change just after a jump of $(H_N(t))$ and the second holds by Proposition~\ref{PropH}.

We conclude the proof of the theorem with the convergence in distribution of Theorem~\ref{TheoCVZ} and Theorem~3.3 of~\citet{KurtzEthier}. 
\end{proof}

\begin{definition}\label{defmUN}
The occupation measure $\mu_U^N$ of 
  \[
\left(\frac{U_4^N(\sqrt{N}t)}{\sqrt{N}}\right) \text{ is }  \croc{\mu_U^N,F}\steq{def}\int_0^{+\infty} F\left(s,\frac{U_4^N(s\sqrt{N})}{\sqrt{N}}\right)\diff s,
  \]
  for $F{\in}{\cal C}_c(\R_+^2)$.
\end{definition}

\begin{theorem}
If $(U_N(t))$ is the solution of the SDEs~\eqref{SDEm42} with the initial condition $(y_N,1,v_N)$ and $(y_N,v_N)$ satisfying Relation~\eqref{InitIV}, then the sequence $(\mu_U^N)$ converges in distribution to $\mu_U^\infty$ defined by, for  any $F{\in}{\cal C}_c([0,t_\infty){\times}\R_+)$,
  \[
  \int F(s,x)\mu_U^\infty(\diff s, \diff x)=  \int F(s,\sqrt{x})\Gamma_0\left(\frac{\kappa_0}{2\kappa_5}, \frac{\kappa_4}{2\kappa_3y(1-t/t_\infty)^2} \right)(\diff x)\diff s,
  \] 
	where $t_\infty$ is defined by Relation~\eqref{cH} and $\Gamma_0$ is the distribution of Definition~\ref{GamDef}.
\end{theorem}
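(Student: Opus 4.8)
The plan is to reduce the statement, via the time-change constructions of this section, to Theorem~\ref{TheoCVZ} and Corollary~\ref{CorZ}, by isolating the contribution of the time intervals where the third coordinate vanishes: it is precisely there, through an extra averaging over a death phase, that the first parameter of the Gamma distribution drops from $\kappa_0/(2\kappa_5){+}1/2$, as in $\mu_Z^\infty$, to $\kappa_0/(2\kappa_5)$. By the identity in distribution $(U_N){\steq{dist}}(\widetilde U_N)$ I would work with $(\widetilde U_N(t))$ of~\eqref{SDETU}. Recall that, by the same argument as for the identity $\widetilde U_2^N(H_N(t)){=}Z_2^N(t)$, one also has $\widetilde U_4^N(H_N(u)){=}Z_4^N(u)$ for all $u{\ge}0$; that $\{\widetilde U_3^N{=}0\}$ is the disjoint union of the intervals $[\sigma_k,\sigma_k{+}A_k)$ with $\sigma_k{=}H_N(\tau_k{-})$, where the $\tau_k$'s are the successive points of the restriction of ${\cal N}$ to $\{Z_3^N(s{-}){=}1\}{\times}(0,\kappa_4 Z_4^N(s{-})]$ and the $A_k{=}a_k$ are ${\rm Exp}(\kappa_0)$, independent of ${\cal F}_{\tau_k-}$; and that, on such an interval, $\widetilde U_4^N(\sigma_k{+}r){=}\sum_i\ind{b_{k,i}{>}r}$ is a pure death process at rate $\kappa_5$ started from $m_k{\steq{def}}Z_4^N(\tau_k{-})$. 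Throughout, $s(t){\steq{def}}t_\infty(1{-}e^{-\kappa_3 t/2})$ and $z_2(t){=}ye^{-\kappa_3 t}$.

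First I would substitute $\sigma{=}\sqrt N s$, so that $\croc{\mu_U^N,F}{=}\tfrac1{\sqrt N}\int_0^\infty F(\sigma/\sqrt N,\widetilde U_4^N(\sigma)/\sqrt N)\diff\sigma$, and split according to $\widetilde U_3^N(\sigma){\ge}1$ or $\widetilde U_3^N(\sigma){=}0$. On $\{\widetilde U_3^N{\ge}1\}$ the time-change identity $\int\ind{\widetilde U_3^N(\sigma)\ge1}h(\sigma)\diff\sigma{=}\int h(H_N(u))\diff u$ (the first relation of the preceding proposition) together with $\widetilde U_4^N(H_N(u)){=}Z_4^N(u)$ turns this piece into $\tfrac1{\sqrt N}\int_0^\infty F(H_N(u)/\sqrt N,Z_4^N(u)/\sqrt N)\diff u$; since $F$ is supported in $[0,T]{\times}\R_+$ with $T{<}t_\infty$ and $H_N(u)/\sqrt N$ converges to the bounded increasing function $s(u)$ by Proposition~\ref{PropH}, this is at most $\|F\|_\infty/\sqrt N$ times a tight sequence of random variables, hence negligible. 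The whole difficulty therefore lies in the piece over $\{\widetilde U_3^N{=}0\}$, equal to $\tfrac1{\sqrt N}\sum_k\int_0^{A_k}F\big((\sigma_k{+}r)/\sqrt N,\widetilde U_4^N(\sigma_k{+}r)/\sqrt N\big)\diff r$.

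For this term I would first replace $(\sigma_k{+}r)/\sqrt N$ by $s(\tau_k)$, using Proposition~\ref{PropH} and $r/\sqrt N{\to}0$, and $\widetilde U_4^N(\sigma_k{+}r)/\sqrt N$ by $(m_k/\sqrt N)e^{-\kappa_5 r}$, using the law of large numbers for pure death processes, the errors being controlled uniformly over the $O(\sqrt N)$ relevant gaps by the $L^2$-estimate~\eqref{Fq2} on $Z_4^N/\sqrt N$ and by $\sum_k A_k/\sqrt N{=}O(1)$; this reduces the term to $\tfrac1{\sqrt N}\sum_k\Psi(\tau_k,m_k/\sqrt N,A_k)$ with $\Psi(t,c,a){\steq{def}}\int_0^a F(s(t),ce^{-\kappa_5 r})\diff r$. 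Since the $(\tau_k,a_k,b_k)$ are the points of ${\cal N}$ in $\{Z_3^N(s{-}){=}1\}{\times}(0,\kappa_4 Z_4^N(s{-})]$, the compensator of $\sum_k\Psi(\tau_k,m_k/\sqrt N,A_k)$ is $\kappa_4\int_0^\infty\ind{Z_3^N(t){=}1}Z_4^N(t)\,\overline\Psi(t,Z_4^N(t)/\sqrt N)\diff t$, where $\overline\Psi(t,c){=}\int_0^\infty\kappa_0 e^{-\kappa_0 a}\Psi(t,c,a)\diff a{=}\int_0^\infty F(s(t),ce^{-\kappa_5 r})e^{-\kappa_0 r}\diff r$ by Fubini, and the associated martingale divided by $\sqrt N$ converges to $0$ by Doob's inequality, exactly as in Step~2 of the proof of Theorem~\ref{LimitOccApp} and again using~\eqref{Fq2}. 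Hence this piece equals $\int_0^\infty\ind{Z_3^N(t){=}1}\,\kappa_4\tfrac{Z_4^N(t)}{\sqrt N}\,\overline\Psi(t,Z_4^N(t)/\sqrt N)\diff t{+}o(1)$; as $c{\mapsto}\kappa_4 c\,\overline\Psi(t,c)$ is continuous, of linear growth in $c$ and compactly supported in $t$, Corollary~\ref{CorZ} gives its convergence in distribution to $\int_0^\infty\!\int_0^\infty\kappa_4\sqrt x\,\overline\Psi(t,\sqrt x)\,\Gamma_0\big(\tfrac{\kappa_0}{2\kappa_5}{+}\tfrac12,\tfrac{\kappa_4}{2\kappa_3 z_2(t)}\big)(\diff x)\,\diff t$.

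Finally I would evaluate this limit explicitly. Writing out the $x$-integral, substituting $u{=}\sqrt x\,e^{-\kappa_5 r}$ in the $r$-integral contained in $\overline\Psi$, applying Fubini in $(x,u)$ and substituting back $x'{=}u^2$, the inner double integral becomes $\tfrac{\kappa_4}{2\kappa_5}\tfrac{\Gamma(\kappa_0/(2\kappa_5))}{\Gamma(\kappa_0/(2\kappa_5)+1/2)}\,b(t)^{-1/2}\!\int F(s(t),\sqrt{x'})\,\Gamma_0\big(\tfrac{\kappa_0}{2\kappa_5},b(t)\big)(\diff x')$ with $b(t){=}\kappa_4/(2\kappa_3 z_2(t))$, the drop of the first Gamma parameter by $1/2$ being precisely the effect of the death averaging; then the change of variable $s{=}s(t)$, for which $z_2(t){=}y(1{-}s/t_\infty)^2$ and $e^{-\kappa_3 t/2}\diff t{=}\tfrac{2}{\kappa_3 t_\infty}\diff s$, gathers all the multiplicative constants into a single prefactor that equals $1$ by the definition~\eqref{cH} of $t_\infty$, leaving $\int F(s,\sqrt x)\,\Gamma_0\big(\tfrac{\kappa_0}{2\kappa_5},\tfrac{\kappa_4}{2\kappa_3 y(1-s/t_\infty)^2}\big)(\diff x)\,\diff s$; since this holds for every $F{\in}{\cal C}_c([0,t_\infty){\times}\R_+)$ and the limit is deterministic, the convergence in distribution of $(\mu_U^N)$ to $\mu_U^\infty$ follows. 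The hard part will be the rigorous bookkeeping of the $o(1)$ terms of the third paragraph — especially the uniform-in-$k$ control, over the $O(\sqrt N)$ gaps, of the pure-death law of large numbers and of the approximation $\sigma_k/\sqrt N{\approx}s(\tau_k)$ — which is where the second-moment bound~\eqref{Fq2} and a Skorohod-representation strengthening of Proposition~\ref{PropH} are needed.
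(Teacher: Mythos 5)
Your proposal is correct and follows essentially the same route as the paper: reduce to $(\widetilde U_N)$, discard the $\{\widetilde U_3^N\ge 1\}$ contribution via the time change, treat the $\{\widetilde U_3^N=0\}$ contribution through the compensator of the marked Poisson process ${\cal N}$ (the paper's decomposition $J_N{+}M_N$ of the second term of Relation~\eqref{OCeq}), invoke the law of large numbers for the pure death phase uniformly over $Z_4^N(s){\in}[d_0\sqrt N,D_0\sqrt N]$, and conclude with Corollary~\ref{CorZ}. Your explicit evaluation of the resulting integral — in particular the drop of the Gamma parameter from $\kappa_0/(2\kappa_5){+}1/2$ to $\kappa_0/(2\kappa_5)$ and the cancellation of the prefactor via the definition of $t_\infty$ — correctly fills in what the paper dismisses as ``standard calculus''.
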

By using  Proposition~\ref{VoInv} of Section~\ref{V0sec}, note that,   for $0{\le}t{<}t_\infty$,
\[
\Gamma_0\left(\frac{\kappa_0}{2\kappa_5}, \frac{\kappa_4}{2\kappa_3y(1-t/t_\infty)^2} \right)
\]
is the invariant distribution of the infinitesimal generator ${\cal A}_{u_2(t)}$ of Proposition~\ref{PropJ1}, where $(u_2(t))$ is defined by Theorem~\ref{AP2}.
\begin{proof}
  Let $f{\in}{\cal C}_c([0,t_\infty){\times}\R_+)$, we have
  \[
		\croc{\mu_U^N,F}=\frac{1}{\sqrt{N}}\int_0^{+\infty} f\left(\frac{s}{\sqrt{N}},\frac{U_4^N(s)}{\sqrt{N}}\right)\diff s. 
  \]
For $F{\in}{\cal C}_c(\R_+)$ and $S{\geq}0$,  by using the definition of $(\widetilde{U}_4^N(s))$, the relation
\begin{multline}\label{OCeq}
\frac{1}{\sqrt{N}}\int_0^{H_N(S)} F\left(\frac{\widetilde{U}_4^N(s)}{\sqrt{N}}\right)\diff s
=\frac{1}{\sqrt{N}}\int_0^{S} F\left(\frac{Z_4^N(s)}{\sqrt{N}}\right)\diff s\\
+\frac{1}{\sqrt{N}}\int_0^{S} \ind{Z_3^N(s{-})=1}\int_{(a,b)\in \R_+\times \R_+^\N}\left(\int_0^a F\left(\frac{Z_4^N(s{-}){-}S(Z_4^N(s{-}),u,b)}{\sqrt{N}}\right)\diff u\right) \\ \times{\cal N}((0,\kappa_4Z_4^N(s{-})],\diff s,\diff a,\diff b),
\end{multline}
holds on the event $\{H_N(S){<}\sqrt{N}t_\infty\}$ whose probability is converging to $1$ as $N$ gets large. 

\bigskip
\noindent
{\sc Tightness.}\\
For $\eps{>}0$ and  $0{\le}t_0{<}t_\infty$, with Proposition~\ref{PropH}, we get that there exists $S{>}0$ such that, for $N$ sufficiently large
\[
 \P\left(\frac{H_N(S)}{\sqrt{N}}\not\in[t_0,t_\infty)\right)\le \eps,
\]
hence
\begin{multline*}
\E\left(\croc{\mu_U^N,[0,t_0]{\times}[K,{+}\infty]}\right)=
\frac{1}{\sqrt{N}}\int_0^{\sqrt{N}t_0}\P\left(U_4^N(s)\ge K{\sqrt{N}}\right)\diff s\\
\leq \eps t_0{+}\frac{1}{\sqrt{N}}\E\left(\int_0^{H_N(S)}\ind{{U_4^N(s)}\ge K{\sqrt{N}}}\diff s\right). 
\end{multline*}
With Relation~\eqref{OCeq}, we obtain that
\begin{multline*}
\frac{1}{\sqrt{N}} \E\left(\int_0^{H_N(S)}\ind{{U_4^N(s)}\ge K{\sqrt{N}}}\diff s\right)
  \le \frac{1}{\sqrt{N}} \int_0^{S} \P\left(\frac{Z_4^N(s)}{\sqrt{N}}{\ge}K\right)\diff s
 \\ \qquad+\kappa_4\int_0^{S}\E\left(\ind{Z_3^N(s{-})=1}\frac{Z_4^N(s)}{\sqrt{N}}\ind{{Z_4^N(s)}{\ge}K{\sqrt{N}}}\right)\diff s\\
 \le \frac{S}{\sqrt{N}}+\frac{1}{K}\int_0^S\E\left(\ind{Z_3^N(s{-})=1}\left(\frac{Z_4^N(s)}{\sqrt{N}}\right)^2\right)\diff s
\end{multline*}
holds.

Relations~\eqref{Fq2} and~\eqref{mSDE} show that
\[
	\limsup_{N\to +\infty} \E\left(\int_0^S\ind{Z_3^N(s{-})=1}\left(\frac{Z_4^N(s)}{\sqrt{N}}\right)^2\diff s\right)<+\infty.
\] 
Therefore, one can choose $K$ sufficiently large so that the quantity $$\E\left(\croc{\mu_U^N,[0,t_0]{\times}[K,{+}\infty]}\right)$$ is  arbitrarily small for $N$ sufficiently large.  Lemma~1.3 of~\citet{Kurtz1992} gives the tightness of the sequence  $(\mu_U^N)$ of random measures on $[0,t_\infty]{\times}\R_+$.

\bigskip
\noindent
{\sc Identification of the Limit.}\\

With the tightness property and since any limiting point can be represented as in Relation~\eqref{muI}, 
it is enough to identify the limit of
\[
\frac{1}{\sqrt{N}}\int_0^{\sqrt{N}t} F\left(\frac{\widetilde{U}_4^N(s)}{\sqrt{N}}\right)\diff s,
\]
for any $t{\in}[0,t_\infty)$. Since, by Proposition~\ref{PropH}, the process $(H_N(t)/\sqrt{N})$ converges in distribution to a deterministic function, one has to obtain the limit of the sequence
\[
\left(\frac{1}{\sqrt{N}}\int_0^{H_N(S)} F\left(\frac{\widetilde{U}_4^N(s)}{\sqrt{N}}\right)\diff s\right). 
\]
The first term of the right-hand side  of Relation~\eqref{OCeq} converges clearly in distribution to $0$.  The second term  can be written as $J_N(S){+}M_N(S)$,
where, for $t{\ge}0$, 
\begin{multline*}
J_N(t){=}\kappa_4\int_0^{t}\ind{Z_3^N(s=1} \\\times\E\left.\left(\int_0^{E_{\kappa_0}} F\left(\frac{Z_4^N(s)-S(Z_4^N(s),u,(E_{\kappa_5,i}))}{\sqrt{N}}\right)\diff u\right|{\cal F}_s\right)\frac{Z_4^N(s)}{\sqrt{N}}\diff s.
\end{multline*}
It is easily checked that $(M_N(t))$ is a martingale whose previsible increasing process is given by, for $t{\ge}0$, 
\begin{multline*}
\croc{M}_N(t)=
\frac{\kappa_4}{\sqrt{N}}\int_0^{t} \ind{Z_3^N(s)=1}\\{\times}\left(\E\left.\left(\int_0^{E_{\kappa_0}} F\left(\frac{Z_4^N(s)-S(Z_4^N(s),u,(E_{\kappa_5,i}))}{\sqrt{N}}\right)^2\diff u\right|{\cal F}_s\right)\right)\frac{Z_4^N(s)}{\sqrt{N}}\diff s.
\end{multline*}
Using Corollary (\ref{CorZ}) one can show that there exists some finite constant $C_0$ such that
\[
\E\left(\croc{M}_N(t)\right)\leq \frac{C_0}{\sqrt{N}},
\]
the martingale $(M_N(t))$ is converging in distribution to $0$.

We are now investigating  the asymptotic behavior of $(J_N(t))$.

By using again Corollary~\ref{CorZ}, for $T{>}0$ and $\eps{>}0$, there exist constants $0{<}d_0{\le}D_0$ such that
\[
\limsup_{N\to+\infty}\E\left(\int_0^T \frac{Z_4^N(s)}{\sqrt{N}} \ind{Z_4^N(s){\not\in}[d_0\sqrt{N},D_0\sqrt{N}]}\diff s \right)\leq \eps,
\]
this is due to the fact that the limit of the occupation measure of $(Z_4^N(s)/\sqrt{N})$ is expressed with a  distribution $\Gamma_0$ of Definition~\ref{GamDef} and, in particular,  without a mass at $0$. 

Let $z{\ge}1$, then
\begin{multline}
  \E\left(\int_0^{E_{\kappa_0}} F\left(\frac{z-S(z,u,(E_{\kappa_5,i}))}{\sqrt{N}}\right)\diff u\right)
  \\=  \int_0^{+\infty}\E\left(F\left(\frac{z}{\sqrt{N}} \frac{1}{z}\sum_{i=1}^z \ind{E_{\kappa_5,i}{>}u}\right)\right)\P(E_{\kappa_0}{\ge}u)\diff u,
\end{multline}
for $\eta{>}0$ and $u{\ge}0$, the relation 
\[
\P\left( \left|\frac{1}{z}\sum_{i=1}^z \ind{E_{\kappa_5,i}{>}u}{-}e^{-\kappa_5 u}\right|\ge \eta\right)\leq\frac{1}{z\eta^2}
\]
holds. With the uniform continuity of $F$, we therefore obtain the relation
\[
\lim_{N\to+\infty}\sup_{z{\in}[d_0\sqrt{N},D_0\sqrt{N}]}\E\left(\left|F\left(\frac{z}{\sqrt{N}} \frac{1}{z}\sum_{i=1}^z \ind{E_{\kappa_5,i}{>}u}\right){-}F\left(\frac{z}{\sqrt{N}} e^{-\kappa_5 u}\right)\right|\right)=0. 
\]
We have therefore that, for the convergence in distribution, the sequence $(J_N(t))$ has the same asymptotic behavior as
\[
\left(\kappa_4\int_0^{t}\ind{Z_3^N(s{-})=1} \int_0^{+\infty} F\left(\frac{Z_4^N(s)}{\sqrt{N}}e^{-\kappa_5u}\right)e^{-\kappa_0u}\diff u \frac{Z_4^N(s)}{\sqrt{N}}\diff s\right).
\]
We can now use Corollary (\ref{CorZ}) and standard calculus to complete the proof of the theorem. 
\end{proof}

We now establish the  fact that the process  $(X_2^N(t))$ has indeed the same asymptotic behavior as $(U_2^N(t))$. 
\begin{proposition}\label{U=Z}
Let $(X_N(t))$ and $(U_N(t))$ be the solutions of the SDEs~\eqref{SDEm4} and~\eqref{SDEm42} with initial point $(0,y_N,1,v_N)$ and $(y_N,v_N)$ satisfies Relation~\eqref{InitIV}, then the two sequences
  \[
  \left(\frac{X_2^N(\sqrt{N}t)}{N}\right)\text{ and }
  \left(\frac{U_2^N(\sqrt{N}t)}{N}\right)
  \]
have the same limit for the convergence in distribution.
\end{proposition}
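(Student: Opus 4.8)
The plan is to couple $(X_N(t))$ and $(U_N(t))$ on a common probability space by feeding the \emph{same} Poisson processes ${\cal P}_0,\ldots,{\cal P}_5$ into the SDEs~\eqref{SDEm4} and~\eqref{SDEm42}, and to show that on the timescale $(\sqrt{N}t)$ the second coordinates stay within $o(N)$ of each other. The only structural difference between the two dynamics is that a point of ${\cal P}_0$ is routed directly to the third coordinate of $U_N$, whereas for $X_N$ it lands on the first coordinate and is then moved to the third one at rate $\kappa_2X_1^N(t)X_2^N(t)$; hence for $X_N$ there are short \emph{transit intervals} on which $X_1^N{=}1$, $X_3^N{=}0$ while $U_3^N{=}1$. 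I would fix $T{<}t_\infty$ and a small $\delta{\in}(0,y)$, introduce the stopping time $\tau_N{=}\inf\{t{\ge}0:X_2^N(t){\le}\delta N\}$, first control the discrepancy on $[0,\sqrt{N}T{\wedge}\tau_N]$, and then remove the stopping time by a bootstrap argument.

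First I would show that $(X_1^N(t))$ does not accumulate mass. On $[0,\tau_N]$ one has $X_2^N(t){\ge}\delta N$, so whenever $X_1^N(t){\ge}1$ the transfer rate is at least $\kappa_2\delta N$. A comparison of $(X_1^N(t))$ with an $M/M/\infty$ queue with arrival rate $\kappa_0$ and service rate $\kappa_2\delta N$, as in Section~\ref{MMISec}, should give that $\P(\sup_{t{\le}\sqrt{N}T}X_1^N(t){\le}1){\to}1$ and that the total length of the transit intervals in $[0,\sqrt{N}T]$ has expectation $O(1/\sqrt{N})$. I expect two consequences: the number of upward jumps of $(X_2^N(t))$ on $[0,\sqrt{N}T{\wedge}\tau_N]$, which equals ${\cal P}_1((0,\kappa_1]{\times}\{s{\le}\sqrt{N}T{\wedge}\tau_N:X_1^N(s){=}1\})$, tends to $0$ in $L_1$; and $X_3^N(s){=}U_3^N(s)$ for every $s{\le}\sqrt{N}T{\wedge}\tau_N$ outside a random set whose Lebesgue measure is $O(1/\sqrt{N})$, the perturbation of $(X_4^N(t))$ during the transit intervals — and hence of the decay rate $\kappa_4X_3^NX_4^N$ of $(X_3^N(t))$ — being confined to the same negligible set.

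Next I would compare the two second coordinates directly. Both $(X_2^N(t))$ and $(U_2^N(t))$ decrease only through ${\cal P}_3$, and with the common coupling $(D_N(t)){=}((X_2^N(t){-}U_2^N(t))/N)$ is the sum of the vanishing ${\cal P}_1$ contribution and of an integral term which, using
\[
\left|X_2^N X_3^N{-}U_2^N U_3^N\right|\le X_3^N\left|X_2^N{-}U_2^N\right|{+}U_2^N\left|X_3^N{-}U_3^N\right|,
\]
is bounded by a martingale, plus $\kappa_3\int_0^t X_3^N(s)|D_N(s)|\diff s$, plus a term supported on the mismatch set of the previous step whose integrand is $O(N)$ and whose overall contribution to $D_N$ is therefore $O(\sqrt{N})/N{=}O(1/\sqrt{N})$. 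Doob's inequality disposes of the martingale and Gr\"onwall's lemma then gives
\[
\lim_{N\to+\infty}\P\left(\sup_{t{\le}\sqrt{N}T{\wedge}\tau_N}|D_N(t)|\ge \frac{\delta}{2}\right)=0.
\]
Finally, by Theorem~\ref{AP2}, $U_2^N(\sqrt{N}T)/N$ converges to $y(1{-}T/t_\infty)^2$, which exceeds $2\delta$ once $\delta$ is small enough, and $(U_2^N(t))$ is non-increasing, so $\inf_{t{\le}\sqrt{N}T}U_2^N(t)/N{\ge}2\delta$ with probability tending to $1$; together with the previous bound this forces $X_2^N(t)/N{\ge}\delta$ on $[0,\sqrt{N}T]$ with high probability, i.e. $\P(\tau_N{>}\sqrt{N}T){\to}1$. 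The bound of the previous step then holds on all of $[0,\sqrt{N}T]$, and since $T{<}t_\infty$ is arbitrary this yields that $(X_2^N(\sqrt{N}t)/N)$ and $(U_2^N(\sqrt{N}t)/N)$ have the same limit in distribution on $[0,t_\infty)$.

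The main obstacle is that the two middle steps cannot be carried out independently: controlling the transit intervals needs $X_2^N{\ge}\delta N$, which is exactly what the bootstrap derives from the comparison estimate, so the whole argument has to be packaged as a single stopping-time/continuity argument. A secondary nuisance is that during a transit interval $(X_4^N(t))$ only decreases, receiving no ${\cal P}_3$ input, whereas $(U_4^N(t))$ does, so one must verify that this feeds back into the $\kappa_4X_3^NX_4^N$ term only over the negligible transit set — which is precisely where the $O(1/\sqrt{N})$ bound on the aggregate transit length is used.
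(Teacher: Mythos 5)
Your route --- a pathwise coupling of $(X_N(t))$ and $(U_N(t))$ through the common Poisson processes followed by a Gr\"onwall estimate on $(D_N(t))$ --- is genuinely different from the paper's, which never compares the two processes pathwise: it re-runs the time-change/averaging machinery directly for $(X_N(t))$ (an analogue $(\widetilde{Z}_N(t))$ of $(Z_N(t))$, Theorem~\ref{LimitOccApp} with an extra exponential mark of rate $\kappa_2\widetilde{Z}_2^N{=}O(N)$ accounting for the transit through the first coordinate, then the analogues of Theorem~\ref{TheoCVZ} and Proposition~\ref{PropH}), and concludes because the limit of the second coordinate is a deterministic function, so ``same limit in distribution'' requires no coupling at all.

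The gap in your argument is the claim that $X_3^N(s){=}U_3^N(s)$ outside a random set of Lebesgue measure $O(1/\sqrt{N})$ on $[0,\sqrt{N}T]$. The transit intervals indeed contribute only $O(\sqrt{N}){\times}O(1/N){=}O(1/\sqrt{N})$, but they are not the only source of mismatch: under the common driving by ${\cal P}_4$, the excursion of $(U_3^N(t))$ above $0$ ends at the first point of ${\cal P}_4$ below the curve $\kappa_4U_3^NU_4^N$, while that of $(X_3^N(t))$ ends at the first point below $\kappa_4X_3^NX_4^N$. Since $X_4^N$ and $U_4^N$ differ (by $O(1)$ already because of the transit delay during which $U_4^N$ receives ${\cal P}_3$ input at rate $O(N)$, and by $O(\eta\sqrt{N})$ once a relative discrepancy $\eta$ in the second coordinates has accumulated), these are generically different points of ${\cal P}_4$, and the two excursions then end at times differing by up to the full excursion length $O(1/\sqrt{N})$ (cf.\ Proposition~\ref{Hconv}). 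On such a desynchronisation window one of $X_3^N$, $U_3^N$ equals $1$ and the other $0$, so the corresponding second coordinate loses an extra $O(\sqrt{N})$ molecules and $|X_4^N{-}U_4^N|$ becomes of order $\sqrt{N}$. With $O(\sqrt{N})$ excursions on $[0,\sqrt{N}T]$, the total mismatch time is a priori $O(1)$, hence $\int U_2^N|X_3^N{-}U_3^N|\diff s/N{=}O(1)$, and the Gr\"onwall bound does not close. The estimate can be repaired, but only by propagating jointly the per-excursion desynchronisation probability (itself of order $\eta{+}1/\sqrt{N}$) and the discrepancy of the fourth coordinates --- a second circularity beyond the one you identify (the need for $X_2^N{\ge}\delta N$), and precisely the complication that the paper's non-coupling route avoids.
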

\begin{proof}
The proof follows the arguments used for the convergence in distribution of $(U_2^N(\sqrt{N}t)/N)$. Some adjustments are nevertheless necessary but the main ideas are essentially the same. We sketch the main lines of the proof. 

If  $(X_N(t))$ is a solution of SDEs~\eqref{SDEm4}, on the time interval $[0,\sqrt{N}T]$ the contribution of the Poisson process ${\cal P}_1$ to the coordinate $(X_2^N(t))$ is of the order of $\sqrt{N}$ which is negligible since  the order of magnitude considered for $(X_2^N(t))$  is $N$. Therefore, we can take this Poisson process out of the set of SDEs for $(X_N(t))$.

For $T{>}0$, it is not difficult to show that, with high probability, the values of the process $(X_2(\sqrt{N}t)/N)$ are in $(\delta,3/2)$ on the time interval $[0,T]$. When there is a new arrival for the chemical species $S_1$, it is transformed  into chemical species $S_3$ at rate at least $\kappa_2\delta N$. Hence, with high probability, on the time interval $[0,\sqrt{N}T]$, the values of the process $(X_1^N(t))$ are in the set $\{0,1\}$. 

A central argument for the convergence of $(U_2^N(\sqrt{N}t)/N)$ is Theorem~\ref{TheoCVZ}.  We define by $(\widetilde{Z}_N(t))$ the analogue of $(Z_N(t))$, i.e. the time-changed process $(X_N(t))$ with all time intervals where $X_3^N$ is null are removed. The process $(\widetilde{Z}_N(t))$ satisfies the analogue of the SDEs~\eqref{SDEZ} where the last SDE is replaced by
\begin{multline*}
\diff \widetilde{Z}_4^N(t)={\cal P}_3\left((0,\kappa_3\widetilde{Z}_2^N\widetilde{Z}_3^N(t{-})),\diff t\right){-}{\cal P}_{5}\left((0,\kappa_{5}\widetilde{Z}_4^N(t{-})),\diff t\right),\\
\qquad{-}\ind{\widetilde{Z}_3^N(t{-}){=}1}\displaystyle\int_{a,b}S\left(\widetilde{Z}_4^N(t{-}),a{+}\frac{c}{\kappa_2\widetilde{Z}_2^N(t{-})},b\right)\\
\times\widetilde{\cal N}\left(\left(\rule{0mm}{4mm}0,\kappa_4Z_4^N(t{-})\right],\diff t,\diff a,\diff b,\diff c\right),
\end{multline*}
where  $\widetilde{\cal N}$ be a  Poisson marked point process  on $\R_+^2{\times}\R_+{\times}\R_+^\N{\times}\R_+{\times}$ with intensity measure
\[
\diff s\otimes\diff t\otimes\kappa_0e^{{-}\kappa_0a}\diff a\otimes Q(\diff b)\otimes e^{{-}c}\diff c,
\]
where, as before,  $Q$ is the distribution of $(E_i)$ on $\R_+^\N$. The additional variable $c$ of the Poisson process $\widetilde{\cal N}$  in this SDE is due to the fact that when $\widetilde{Z}_1^N(t){=}1$ and $\widetilde{Z}_3^N(t){=}0$ , $\widetilde{Z}_1$ leaves the state $1$ at rate $\kappa_2 \widetilde{Z}_2^N(t{-})$. Because of the assumption on $(\widetilde{Z}_2^N(t{-}/N)$ in $(\delta,3/2)$,  a glance at the proof of Theorem~\ref{LimitOccApp} shows that, even with this extra term, the limit result of this theorem still holds with the same limits. The proof is then concluded as in the proof of Proposition~\ref{PropH}. 
\end{proof}

\printbibliography

\end{document}